\newtheorem{theorem}{Theorem}[section]
\newtheorem{lemma}{Lemma}[section]
\newtheorem{remark}{Remark}[section]
\newtheorem{proposition}{Proposition}[section]
\DeclareMathOperator{\sgn}{sgn}
\DeclareMathOperator{\rel}{rel}
\DeclareMathOperator{\Id}{Id}
\DeclareMathOperator{\specflow}{specflow}
\DeclareMathOperator{\parity}{parity}
\DeclareMathOperator{\cross}{cross}
\DeclareMathOperator{\Crit}{Crit}
\DeclareMathOperator{\Int}{int}
\DeclareMathOperator{\fred}{Fred}
\newcommand{\A}{{\mathscr{A}}}
\newcommand{\M}{{\mathscr{M}}}
\newcommand{\W}{{\mathscr{W}}}
\newcommand{\cB}{{\mathcal{B}}}
\newcommand{\LL}{{\mathscr{L}}}
\newcommand{\Lc}{{\mathcal{L}}}
\newcommand{\D}{{ \mathbb{D}^2}}
\newcommand{\E}{{\mathscr{E}}}
\newcommand{\Hh}{\mathcal{H}_{\parallel}}
\newcommand{\HH}{\mathcal{H}_{\parallel}^{{\rm reg}}}
\newcommand{\Hil}{\mathscr{H}}
\newcommand{\R}{{ \mathbb{R}}}
\newcommand{\N}{{ \mathbb{N}}}
\newcommand{\Z}{{ \mathbb{Z}}}                                
\newcommand{\B}{{ \mathbf{B}}}
\newcommand{\bC}{{ \mathbf{C}}}
\newcommand{\bGamma}{{ {\Gamma}}}
\newcommand{\HB}{{ \mathrm{HB}}}
\newcommand{\HC}{{\mathrm{HC}}}
\newcommand{\cl}{{\mathrm{cl}}}
\newcommand{\vF}{\mathcal{F}_{\parallel}}
\newcommand{\Cross}{{\mathrm{Cross}}}
\numberwithin{equation}{section}
\begin{document} 

\title{The  Poincar\'e-Hopf Theorem  for relative braid classes}

\author{S. Muna\`{o}, R.C. Vandervorst\footnote{Department of Mathematics, VU University
Amsterdam, The Netherlands.}}

\maketitle








{\abstract

\emph{Braid Floer homology} is an invariant of proper relative braid classes   \cite{GVVW}.
Closed integral curves of 1-periodic Hamiltonian vector fields on the 2-disc may be regarded as braids.
If the Braid Floer homology of associated proper relative braid classes is non-trivial,
then additional   closed integral curves of the Hamiltonian equations are forced
via a  Morse type theory.
In this article we show that certain information contained in the braid Floer homology --- the Euler-Floer characteristic ---
also forces closed integral curves and periodic points of arbitrary vector fields and diffeomorphisms and leads to a Poincar\'e-Hopf type Theorem.
The Euler-Floer characteristic for any proper relative braid class can be computed  via a
finite cube complex that serves as a model  for the given braid class.
The results in this paper are restricted to  the   2-disc, but can  be extend to two-dimensional surfaces 
(with or without boundary).
}
\section{Introduction}
\label{intro}
Let $\D\subset \R^2$ denote the standard (closed) 2-disc in the plane with coordinates $x=(p,q),$ i.e. $\D:=\{(p,q)\in\R^2: p^2+q^2\leq 1\},$ and let $X(x,t)$ be a smooth 1-periodic vector
field on $\D$, i.e. $X(x,t+1) = X(x,t)$ for all $x\in \D$ and $t\in \R$.
 The vector field $X$ is  tangent to the boundary  $\partial \D$, i.e $X(x,t) \cdot \nu =0$ for all $x\in \partial\D$,
where $\nu$ the outward unit normal on $\partial\D$. 
The set of vector fields satisfying these hypotheses is denoted by $\vF(\D\times \R/\Z)$.
\emph{Closed integral curves} $x(t)$ of $X$ are integral curves\footnote{Integral curves of $X$ are smooth functions $x: \R \to \D\subset \R^{2}$ that satisfy the differential equation $x' = X(x,t)$.} of $X$ for which $x(t+\ell) = x(t)$ for some $\ell\in \N$.
Every integral curve   of $X$ with minimal period $\ell$ defines a closed loop in  the configuration space  $\bC_{\ell}(\D)$ of $\ell$ unordered distinct points. A collection of distinct closed integral curves with periods $\ell_j$ defines a closed loop in  $\bC_{m}(\D)$, with $m = \sum_{j} \ell_{j}$.
As curves in the cylinder $\D\times [0,1]$
such a collection of integral curves 
 represents a geometric braid which corresponds to a unique word $\beta_y \in \B_m$, modulo conjugacy and full twists:
\begin{equation}
\label{eqn:cong}
\beta_y \sim \beta_y \Delta^{2k} \sim \Delta^{2k} \beta_y, 
\end{equation}
where $\Delta^2$ is a full positive twist and $\B_m$ is the Artin Braid group on $m$ strands.

Let $y$ be a geometric braid consisting of closed integral curves of $X$,
which will be referred to as a \emph{skeleton}.
The curves $y^i(t)$, $i=1,\cdots, m$ satisfy the periodicity condition $y(0) = y(1)$ as point sets, i.e.
$ y^i(0)=y^{\sigma(i)}(1)$ for some permutation $\sigma \in S_m$.
 In the configuration space $\bC_{n+m}(\D)$\footnote{The space of continuous mapping $\R/\Z \to X$, with $X$ a topological space, is called the  free loop space of $X$ and is denoted by $\Lc X$.}
we consider  closed loops  of the form $x \rel y := \bigl\{x^1(t),\cdots x^n(t),y^1(t),\cdots, y^m(t)\bigr\}$. 
The path component of $x\rel y$ of closed loops in $\Lc \bC_{n+m}(\D)$
is denoted by
$[x\rel y]$ and is called a \emph{relative braid class}. The loops $x'\rel y' \in [x\rel y]$,
keeping $y'$ fixed, is denoted by  $[x']\rel y'$ and is called a  \emph{fiber}.
Relative braid classes are path components
of   braids which have at least two components
and the components are labeled into two groups: $x$ and $y$. The intertwining of $x$ and $y$  defines various
different braid classes.
A relative braid class $[x\rel y]$ in $\D$ is \emph{proper} if components  $x_c\subset x$ cannot be deformed onto
(i) the boundary $\partial\D$, (ii) itself,\footnote{This condition  is separated into two cases: (i) a component in $x$ cannot be not deformed into a single strand, or (ii) if a component in $x$  can be deformed into a single strand, then the latter
 necessarily intersects $y$ or a different component in $x$.} or other components $x_c' \subset x$,   or (iii) components in $y_c \subset y$, see \cite{GVVW} for details.
In this paper we are mainly concerned with relative braids for which $x$ has only one strand.
To proper relative braid classes $[x\rel y]$ one can assign the invariants $\HB_*([x\rel y])$, with coefficients in $\Z_2$,  called  \emph{Braid Floer homology}. In the following subsection we will briefly explain the construction
of the invariants $\HB_*([x\rel y])$ in case that $x$ consists of one single strand. See \cite{GVVW} for more details on
Braid Floer homology.

\subsection{A brief summary of Braid Floer homology}

Fix a Hamiltonian vector field $X_H$ in $\mathcal{F}_{||}(\D\times \R/\Z)$ of the form $X_H(x,t)=J\nabla H(x,t),$ where
$$
J=\left(
\begin{array}{cc}
0 & -1\\
1 & 0
\end{array}
\right)
$$
and $H$ is a Hamiltonian function with the properties:
\begin{enumerate}
\item [(i)]$H\in C^{\infty}(\D \times\R/\Z;\R)$;
\item [(ii)]$H(x,t)|_{x\in\partial\D}=0$, for all $t\in \R/\Z$. 
\end{enumerate}
For closed integral curves of $X_H$ of period 1 we define the Hamilton action
$$
\A_{H}(x)=\int_0^1\tfrac{1}{2}Jx\cdot x_t-H(x,t)\ dt,
$$
Critical points of the action functional $\A_H$ are in one-to-one correspondence with
closed integral curves of period 1.
Assume that
 $y=\{y^j(t)\}$ is a collection of closed integral curves of the Hamilton vector field $X_H$, i.e. periodic solutions of the   $y_t^j=X_H(y^j,t)$.
Consider a proper relative braid class $[x]\rel y$, with  $x$ 1-periodic and seek closed integral curves 
$x\rel y$ in $[x]\rel y$.
The set of critical points of $\A_H$ in $[x]\rel y$ is denoted by $\Crit_{\A_H}([x]\rel y)$. 
In order to understand the set $\Crit_{\A_H}([x]\rel y)$ we consider the negative $L^2$-gradient flow of $\A_H$.  
The $L^2$-gradient flow
$
u_s=-\nabla_{L^2}\A_H(u)
$
yields the Cauchy-Riemann  equations  
$$
u_s(s,t)+Ju_t(s,t)+\nabla H(u(s,t),t)=0,
$$
for which the stationary   solutions $u(s,t)=x(t)$ are the critical points of $\A_{H}$. 

To a braid $y$ one can assign an integer $\Cross(y)$ which counts the number of crossings (with sign) of strands in the standard planar projection. 
In the case of a relative braid $x \rel y$ the number $\Cross(x\rel y)$ is an invariant of the relative braid class $[x\rel y]$. 
In
 \cite{GVVW}
 a monotonicity lemma is proved, which states 
 that along solutions $u(s,t)$ of the nonlinear Cauchy-Riemann equations, the number $\Cross(u(s,\cdot)\rel y)$ is
non-increasing (the jumps correspond to `singular braids', i.e. `braids' for which intersections occur). As a consequence  an isolation property for proper relative braid classes exists: the set
bounded solutions of the Cauchy-Riemann equations in a proper braid class fiber $[x]\rel y$, denoted
by
  $\M([x]\rel y;H)$, is compact and isolated   with respect to the topology of uniform convergence on compact subsets of $\R^2$. 
These facts provide all the ingredients to follows Floer's approach towards Morse Theory for
the Hamiltonian action \cite{Floer1}. For generic Hamiltonians which satisfy (i) and (ii) above
and for which $y$ is a skeleton, 
the critical points in $[x]\rel y$ of the action $\A_H$ are non-degenerate and the set of connecting  orbits $\M_{x_{-},x_{+}}([x]\rel y;H)$ are smooth finite dimensional manifolds.
To critical in $\Crit_{\A_H}([x]\rel y)$ we  assign a relative index $\mu^{CZ}(x)$ (the Conley-Zehnder index) and
$$
\dim \M_{x_{-},x_{+}}([x]\rel y;H)=\mu^{CZ}(x_{-})-\mu^{CZ}(x_{+}).
$$
Define the free abelian groups $C_k$ over the critical points of index $k$, with coefficients in $\Z_{2}$, i.e.
$$
C_{k}([x]\rel y;H):=\bigoplus_{x\in \Crit_{\A_H}([x]\rel y), \atop \mu(x)=k}\Z_{2}\langle x\rangle,
$$
and the boundary operator 
$$
\partial_{k} = \partial_k([x]\rel y;H): C_{k}\to C_{k-1},
$$
which counts the number of orbits (modulo 2) between critical points of index $k$ and $k-1$
respectively. Analysis of the spaces $\M_{x_{-},x_{+}}([x]\rel y;H)$ reveals  that $(C_*,\partial_{*})$ is a chain complex, and its  (Floer) homology is denoted by
$
\HB_{*}([x]\rel y;H).
$
Different choices of $H$ yields isomorphic Floer homologies and 
$$
\HB_{*}([x]\rel y) = \varprojlim \HB_{*}([x]\rel y;H),
$$
where the inverse limit is defined
with respect to the canonical isomorphisms $a_k(H,H'): \HB_k([x]\rel y,H) \to \HB_k([x]\rel y,H')$.
Some properties are:
\begin{enumerate}
\item [(i)] the groups $\HB_k([x]\rel y)$ are defined for all $k\in \Z$ and are finite, i.e. $\Z_2^d$ for some $d\ge 0$;
\item [(ii)]  the groups $\HB_k([x]\rel y)$ are invariants for the fibers in the same relative braid class $[x\rel y]$, i.e. if $x\rel y\sim x'\rel y'$, then $\HB_k([x]\rel y)\cong \HB_k([x']\rel y')$. For this reason we will write $\HB_{*}{([x\rel y])}$; 
\item [(iii)] if $(x\rel y)\cdot \Delta^{2\ell}$ denotes  composition with $\ell$ full twists, then 
$\HB_k([(x\rel y)\cdot\Delta^{2\ell}])\cong \HB_{k-2\ell}([x\rel y])$.
\end{enumerate}

\subsection{The Euler-Floer characteristic and the Poincar\'{e}-Hopf Formula}
Braid Floer homology is an invariant of conjugacy classes in $\B_{n+m}$ and can be computed from purely topological data. The \emph{Euler-Floer characteristic} of
$\HB_*\bigl([x\rel y]\bigr)$ is defined as follows:
\begin{equation}
\label{EulerFloer-1}
\chi\bigl(x\rel y\bigr) = \sum_{k\in \Z} (-1)^{k}{\dim \HB_k([x\rel y])}.
\end{equation}
%
In Section \ref{comp} we show that the Euler-Floer characteristic of $\HB_*\bigl([x\rel y]\bigr)$ can be computed from a finite
cube complex which serves as a model for the braid class.

A 1-periodic function $x \in C^{1}(\R/\Z)$ is an \emph{isolated} closed integral curve of $X$ if there exists an $\epsilon>0$ such that $x$ is the only solution of the differential equation
\begin{equation}
\label{eqn:E}
\E\bigl(x(t)\bigr) =
 \frac{dx}{dt}(t)- X\bigl(x(t),t\bigr),
\end{equation}
 in $B_{\epsilon}(x) \subset C^{1}(\R/\Z)$.
For  isolated, and in particular non-degenerate closed integral curves we can define an \emph{index} as follows.
Let $\Theta \in {\rm M}_{2\times 2}(\R)$  be any matrix satisfying
$\sigma(\Theta) \cap 2\pi k i\R = \varnothing$, for all $k\in \Z$
and let   $\eta\mapsto R(t;\eta)$ be a curve in $C^\infty\bigl(\R/\Z;{\rm M}_{2\times 2}(\R)\bigr)$, with
$R(t;0) = \Theta$ and $R(t;1) = D_xX(x(t),t)$ --- the linearization of $X$ at $x(t)$. Then
$\eta \mapsto F(\eta) =  \frac{d}{dt} - R(t;\eta)$ defines a curve in $\fred_0(C^1,C^0)$, where we denote by $\fred_0(C^1,C^0)$ the space of Fredholm operators of index $0$ between $C^1$ and $C^0.$
Denote by $\Sigma \subset \fred_0(C^1,C^0)$  the set of non-invertible operators and by $\Sigma_1 \subset \Sigma$ the non-invertible
operators with a 1-dimensional kernel. 
If the end points of $F$ are invertible
one can choose the path $\eta\mapsto R(t;\eta)$ such that
$F(\eta)$ intersects $\Sigma$ in  $\Sigma_1$ and all intersections are transverse.
If $\gamma = \# \hbox{~intersections of}~ F(\eta)~\hbox{with}~\Sigma_1$, then
\begin{equation}
\label{eqn:index0}
\iota(x) = -\sgn(\det(\Theta)) (-1)^\gamma.
\end{equation}
This definition is independent of the choice of $\Theta$, see Section \ref{sec:proof}.

The above definition can be expressed in terms of the Leray-Schauder degree.
Let $M\in {\rm GL}(C^0,C^1)$ be any isomorphism such that $\Phi_M(x) := M  \E(x)$ is of the form
`identity + compact'.
Then 
the index of an  isolated closed integral curve is given by
\begin{equation}
\label{eqn:index}
\iota(x) = - \sgn(\det(\Theta)) (-1)^{\beta_M(\Theta)} \deg_{ LS}(\Phi_M,B_{\epsilon}(x),0).
\end{equation}
where $\beta_M(\Theta)$ is the number of negative eigenvalues of $M\frac{d}{dt} - M\Theta$ counted with multiplicity.
The latter definition holds for both non-degenerate and isolated 1-periodic closed integral curves of $X$.
In Section \ref{sec:proof} we show that the two expressions for the index are the same and we show that they are independent of the choices of $M$ and $\Theta$.

\begin{theorem}[Poincar\'e-Hopf Formula]
\label{thm:PH1}
Let $y$ be a skeleton of closed integral curves of a vector field $X\in \vF(\D\times \R/\Z)$ and let $[x\rel y]$ be a proper relative braid class. Suppose that all 1-periodic
 closed integral curves of  $X$ are isolated, then for all closed integral curves $x_0\rel y$ in $[x_0]\rel y$ it holds that
\begin{equation}
\label{eqn:PH2}
\sum_{x_{0}} \iota(x_{0}) = \chi\bigl(x\rel y\bigr).
\end{equation}
\end{theorem}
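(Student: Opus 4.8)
The plan is to reinterpret the left-hand side of \eqref{eqn:PH2} as a Leray--Schauder degree on the braid class fiber, then deform the vector field to a generic Hamiltonian, where the degree equals the Euler characteristic of the Floer complex. First I would fix an isomorphism $M$ as in \eqref{eqn:index} and view the fiber $[x_0]\rel y = [x]\rel y$ as an open subset of the loop space $C^{1}(\R/\Z)$, on which $\Phi_M(x) = M\E(x)$ is a compact perturbation of the identity. Since $X$ is tangent to $\partial\D$ and $[x\rel y]$ is proper, no closed integral curve of $X$ can lie on the topological boundary of the fiber: uniqueness of solutions of $x' = X(x,t)$ forces a strand $x_0$ that touches $y$ to coincide with a component of $y$, and forces the invariant set $\partial\D$ to be disjoint from $x_0$ unless $x_0\subset\partial\D$, while properness excludes both $x_0\subset\partial\D$ and $x_0$ coinciding with a component of $y$. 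Hence the zero set of $\E$ in $\clos([x]\rel y)$ lies in the interior and, being confined to $\D$, is compact --- by the isolation hypothesis it is finite. After intersecting with a large ball $B_R\subset C^{1}(\R/\Z)$ containing all zeros, $\deg_{LS}(\Phi_M,[x]\rel y,0)$ is defined, and excision and additivity of the degree together with \eqref{eqn:index} give
\[
\sum_{x_0}\iota(x_0) = -\sgn\bigl(\det\Theta\bigr)\,(-1)^{\beta_M(\Theta)}\,\deg_{LS}\bigl(\Phi_M,[x]\rel y,0\bigr).
\]

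The next step is to show that this number depends only on the braid class $[x\rel y]$. Using the constructions of \cite{GVVW} I would choose a generic Hamiltonian vector field $X_H = J\nabla H$ with $H$ satisfying (i)--(ii), having the given curves $y^j$ as $1$-periodic orbits and such that every $1$-periodic closed integral curve in $[x]\rel y$ is non-degenerate. Along the affine homotopy $X_\lambda = (1-\lambda)X + \lambda X_H$, $\lambda\in[0,1]$, one has $X_\lambda\in\vF(\D\times\R/\Z)$ and each $y^j$ remains an integral curve of $X_\lambda$ (since $\dot y^j(t) = X(y^j(t),t) = X_H(y^j(t),t)$), so the uniqueness argument above applies to $X_\lambda$ and $[x]\rel y$ stays isolating for the zeros of $\E_\lambda = \tfrac{d}{dt} - X_\lambda(\cdot,t)$; a uniform $C^{0}$, hence $C^{1}$, bound on $X_\lambda$ keeps all these zeros inside $B_R$. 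Homotopy invariance of the Leray--Schauder degree then yields $\deg_{LS}(\Phi_M^{X},[x]\rel y,0) = \deg_{LS}(\Phi_M^{X_H},[x]\rel y,0)$.

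It remains to evaluate the degree for the generic Hamiltonian $X_H$. Its $1$-periodic orbits in $[x]\rel y$ are exactly the generators of the Floer chain complex $C_*([x]\rel y;H)$, each carrying a Conley--Zehnder index $\mu^{CZ}$. The heart of the argument, to be carried out in Section \ref{sec:proof}, is the local index computation: for a non-degenerate Hamiltonian orbit $x_0$ one has $\deg_{LS}(\Phi_M,B_\epsilon(x_0),0) = (-1)^{m}$, where $m$ is the number of negative eigenvalues of $M\bigl(\tfrac{d}{dt} - D_xX_H(x_0,t)\bigr)$, and the normalizing factors in \eqref{eqn:index0}--\eqref{eqn:index} are designed exactly so that $\iota(x_0) = (-1)^{\mu^{CZ}(x_0)}$; equivalently, the parity of the number $\gamma$ of transverse crossings of $\eta\mapsto \tfrac{d}{dt} - R(t;\eta)$ with $\Sigma_1$ equals the change of the relevant Morse index and computes $\mu^{CZ}(x_0)$ modulo $2$, which simultaneously proves independence of the choices of $M$ and $\Theta$. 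Summing over critical points, and using that the Euler characteristic of a finite chain complex equals that of its homology and that $\HB_*([x\rel y])$ is computed by any generic $H$ via the canonical isomorphisms, one obtains
\[
\sum_{x_0}\iota(x_0) = \sum_{x_0\in\Crit_{\A_H}([x]\rel y)}(-1)^{\mu^{CZ}(x_0)} = \sum_{k\in\Z}(-1)^k\dim C_k([x]\rel y;H) = \sum_{k\in\Z}(-1)^k\dim\HB_k([x\rel y]) = \chi\bigl(x\rel y\bigr).
\]

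The main obstacle I anticipate is precisely the local index identification $\iota(x_0) = (-1)^{\mu^{CZ}(x_0)}$: one must match the Leray--Schauder parity of the $1$-periodic boundary value problem for the linearization $\tfrac{d}{dt} - D_xX_H(x_0,t)$ with the symplectic Conley--Zehnder index of the associated path in $\Sp(2)$, carefully tracking the $\Theta$- and $M$-dependent signs and proving the outcome is independent of these choices --- essentially a spectral-flow computation. A secondary but nontrivial point is the existence of a generic Hamiltonian realizing the prescribed skeleton together with the verification that $[x]\rel y$ remains an isolating neighborhood along the homotopy; this is exactly where properness of $[x\rel y]$ and tangency of the vector fields to $\partial\D$ are used.
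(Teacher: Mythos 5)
Your proposal is correct and follows essentially the same route as the paper: reduce the sum of indices to a Leray--Schauder degree of $\Phi_M$ on the proper fiber (using properness plus uniqueness/invariance of $\partial\D$ for isolation, and additivity/excision for isolated zeroes), deform along the linear homotopy to a generic Hamiltonian with skeleton $y$, and identify the local degree at each non-degenerate orbit with $(-1)^{\mu^{CZ}}$ via the parity/spectral-flow computation, so that the sum equals the Euler characteristic of the Floer chain complex and hence of $\HB_*([x\rel y])$. The step you defer --- the local identification $\iota(x_0)=(-1)^{\mu^{CZ}(x_0)}$ together with independence of $M$ and $\Theta$ --- is exactly the content of the paper's Sections 3--5 and Lemma 6.1 (with the normalization $\Theta=\theta J$, $M=L_\mu^{-1}$ making the prefactors trivial), so your outline matches the paper's proof in both structure and sign conventions.
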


The index formula can be used to obtain   existence results for closed integral curves in proper
relative braid classes.
\begin{theorem}
\label{thm:exist}
Let $y$ be a skeleton of closed integral curves of a vector field $X\in \vF(\D\times \R/\Z)$ and let $[x\rel y]$ be a proper relative braid class. If $\chi\bigl(x\rel y\bigr) \not = 0$, then
there exist closed integral curves $x_0\rel y$ in $[x]\rel y$.
\end{theorem}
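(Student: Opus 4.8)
The plan is to deduce Theorem \ref{thm:exist} directly from the Poincar\'e--Hopf Formula, Theorem \ref{thm:PH1}. Assume first, as a warm-up, that all $1$-periodic closed integral curves of $X$ are isolated. Then Theorem \ref{thm:PH1} applies and gives $\sum_{x_{0}} \iota(x_{0}) = \chi\bigl(x\rel y\bigr)$, where the sum runs over all closed integral curves $x_{0}$ for which $x_{0}\rel y$ belongs to the relative braid class $[x\rel y]$. If this set were empty, the left-hand side would be an empty sum, hence $0$; since $\chi\bigl(x\rel y\bigr)\neq 0$ by hypothesis, the set is nonempty, so there is at least one closed integral curve $x_{0}\rel y$ in $[x]\rel y$.

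It remains to remove the isolatedness assumption, and here I would argue by contradiction, using the isolation property of proper braid classes together with a perturbation. Suppose $[x]\rel y$ contains no closed integral curve of $X$. By \cite{GVVW} the set $\M([x]\rel y)$ of bounded solutions of the associated parabolic/Cauchy--Riemann equation in a proper braid class fiber --- in particular the subset of stationary solutions, i.e. the closed integral curves of $X$ lying in $[x]\rel y$ --- is compact and isolated in the topology of uniform convergence on compacta, and this isolation is open in the vector field. Hence there is a $C^{1}$-neighbourhood $\mathcal{U}$ of $X$ in $\vF(\D\times \R/\Z)$ such that no $\tilde X\in\mathcal{U}$ for which $y$ remains a skeleton has a closed integral curve in $[x]\rel y$.

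Next I would choose $\tilde X\in\mathcal{U}$ generic, with $y$ still a skeleton, so that all $1$-periodic closed integral curves of $\tilde X$ are non-degenerate, hence isolated; this is a Kupka--Smale type transversality statement for the time-$1$ map, realised by perturbations supported away from $y$ and from $\partial\D$, so that the tangency condition defining $\vF(\D\times \R/\Z)$ and the skeleton property are preserved. Applying Theorem \ref{thm:PH1} to $\tilde X$ yields $\sum_{x_{0}} \iota(x_{0}) = \chi\bigl(x\rel y\bigr)$, and $\chi\bigl(x\rel y\bigr)$ is unchanged, being an invariant of the relative braid class, and nonzero; so $\tilde X$ must have a closed integral curve in $[x]\rel y$, contradicting the choice of $\mathcal{U}$. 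Therefore $[x]\rel y$ contains a closed integral curve. Equivalently, one can bypass the perturbation by observing that the proof of Theorem \ref{thm:PH1} identifies $\chi\bigl(x\rel y\bigr)$, up to a fixed sign depending on $\Theta$ and $M$, with the Leray--Schauder degree of $\Phi_{M}$ over a bounded isolating neighbourhood of $\M([x]\rel y)$, and a nonzero degree forces a zero of $\Phi_{M}$, i.e. a closed integral curve.

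The step I expect to be the main obstacle is the genericity claim: producing an arbitrarily $C^{1}$-small perturbation inside $\mathcal{U}$ that makes all $1$-periodic closed integral curves non-degenerate while keeping $y$ a skeleton and respecting the boundary tangency. One localises near the finitely many (by compactness) closed integral curves outside $y$, makes the perturbation of a form for which the linearised time-$1$ map can be pushed to have no eigenvalue $1$, and keeps it small enough not to create new closed integral curves in $[x]\rel y$. The degree-theoretic alternative avoids this but instead requires extracting the degree identity from the proof of Theorem \ref{thm:PH1}; in either case the remaining bookkeeping is routine.
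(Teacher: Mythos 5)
Your fallback remark at the end --- that one can bypass the perturbation because the proof machinery identifies $\chi\bigl(x\rel y\bigr)$ with a Leray--Schauder degree over an isolating neighbourhood --- is in fact the paper's entire proof of Theorem \ref{thm:exist}: by Proposition \ref{lemma isolation} the fiber $\Omega=[x]\rel y$ is an admissible domain for $\deg_{LS}(\Phi_\mu,\Omega,0)$ for \emph{every} $X$ with skeleton $y$, the linear homotopy to a Hamiltonian vector field in Equation (\ref{homtop}) preserves this degree, and Proposition \ref{eqspfl} gives $\deg_{LS}(\Phi_{\mu},\Omega,0)=-\chi\bigl(x\rel y\bigr)\neq 0$, so $\Phi_\mu$ has a zero in $\Omega$. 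No hypothesis on the integral curves of $X$ (isolatedness, non-degeneracy) is needed; genericity is only invoked on the Hamiltonian side, where it is supplied by Proposition 7.1 of \cite{GVVW}. Note also that in the paper the logical order is the reverse of yours: Theorem \ref{thm:exist} is proved first, directly from the degree identity, and Theorem \ref{thm:PH1} is proved afterwards.

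Your primary route --- deducing Theorem \ref{thm:exist} from Theorem \ref{thm:PH1} via a contradiction and a generic perturbation --- has a genuine gap at exactly the step you flag. Theorem \ref{thm:PH1}, as stated, requires \emph{all} $1$-periodic closed integral curves of the vector field to be isolated, and you must produce $\tilde X\in\vF(\D\times\R/\Z)$, $C^1$-close to $X$, with this property while keeping every strand of $y$ pointwise fixed and preserving tangency to $\partial\D$. This constrained Kupka--Smale statement is nowhere in the paper and is not routine bookkeeping: the orbits constituting $y$ cannot be moved, so their degeneracy must be removed by perturbing only the linearization along them; and the boundary can carry whole continua of $1$-periodic orbits (e.g.\ if $X$ vanishes on $\partial\D$) which must also be broken up without violating the tangency constraint. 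Until that transversality argument is written out, your main argument does not close, whereas the degree-theoretic argument (your aside, and the paper's proof) avoids the issue entirely because the Leray--Schauder degree is defined and homotopy invariant without any nondegeneracy of $X$. Your openness claim (no closed integral curves in $[x]\rel y$ persists under small perturbation keeping $y$ a skeleton) is essentially correct, but it follows from Proposition \ref{lemma isolation} plus Arzel\`a--Ascoli for the ODE, rather than from the compactness of the Cauchy--Riemann moduli space $\M([x]\rel y;H)$ in \cite{GVVW}, which concerns a different equation.
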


The analogue of Theorem \ref{thm:PH1} can also be proved for relative braid class $[x\rel y]$ in $\bC_{n+m}(\D)$.
Our theory also provides detailed information about the linking of solutions.
In Section \ref{sec:examples} we give various examples and compute the Euler-Floer characteristic.  
This does not provide a procedure for computing the braid Floer homology.

\begin{remark}
\label{rmk:fitz}
{\em
In this paper   Theorem \ref{thm:PH1} is proved using the standard Leray-Schauder degree theory in combination with the theory of spectral flow and parity for operators on Hilbert spaces. 
The Leray-Schauder degree is related to the Euler characteristic of Braid Floer homology. An other approach is the use the degree theory developed by Fitzpatrick et al. \cite{Fitzpatrick:1992vv}.
}
\end{remark}

\subsection{Discretization and computability}

The second  part of the paper deals with the computability of the Euler-Floer characteristic. This is obtained through a finite dimensional model. A model is constructed in three steps: 
\begin{enumerate}
\item[(i)] compose $x\rel y$ with $\ell\ge 0$ full twists $\Delta^2$, such that $(x\rel y)\cdot \Delta^{2\ell}$
is isotopic to a positive braid $x^+\rel y^+$;
\item[(ii)] relative braids $x^+\rel y^+$ are isotopic to Legendrian  braids $x_L\rel y_L$ on $\R^2$, i.e. braids which have the form $x_L=(q_t,q)$ and $y_L=(Q_t,Q),$ where $q=\pi_2 x$ and $Q=\pi_2 y,$ and $\pi_2$   the projection onto the $q-$coordinate;
\item[(iii)]  discretize $q$ and $Q = \{Q^j\}$ 
to $q_d = \{q_i\}$, with $q_i=q(i/d), i=0,\dots,d$ and $Q_D = \{ Q_D^j\}$, with $Q_D^j = \{Q^j_i\}$ and $Q^j_i = Q^j(i/d)$ respectively, 
and consider the piecewise linear interpolations connecting the \emph{anchor} points $q_i$
and $Q^j_i$ for $i=0,\dots,d$. A discretization $q_D\rel Q_D$  is \emph{admissible} if the linear interpolation
is isotopic to $q\rel Q$. All such discretization form the discrete relative braid class $[q_D\rel Q_D]$, for which each fiber is a finite cube complex.
 \end{enumerate}
\begin{remark}
{\em
If the number of
  discretization points is not large enough, then the discretization may not be admissible and therefore not capture the topology of the braid. See \cite{GVV} and Section \ref{subsec:discbraid} for more details. 
}
\end{remark}

For $d>0$   large enough there exists an admissible discretization $q_D\rel Q_D$ for any Legendrian representative
$x_L\rel y_L \in [x\rel y]$ and thus an associated discrete relative braid class $[q_D\rel Q_D]$.
In \cite{GVV} an invariant for discrete braid classes was introduced.
Let $[q_D]\rel Q_D$ denote a fiber in $[q_D\rel Q_D]$,
which  is a cube complex with a finite number of connected components and their closures are denoted by $N_j$.
The faces of the hypercubes $N_j$ can be co-oriented in direction of decreasing the number of crossing in $q_D\rel Q_D$, and we define $N_j^-$ as the closure of the set of faces with outward pointing co-orientation.
The sets $N_j^-$ are called \emph{exit sets}.
The invariant   for a fiber is given by 
$$
\HC_*([q_D]\rel Q_D) = \bigoplus_{j} H_*(N_j, N_j^-).
$$
This discrete braid invariant is well-defined for any  $d>0$ for which there exist admissible discretizations  and  
is independent of both the particular fiber and the discretization size $d$.
For the associated Euler characteristic we therefore write $\chi\bigl(q_D\rel Q_D\bigr)$.
The latter is an Euler characteristic of a topological pair.
 %
 The Euler characteristic of the Braid Floer homology  $\chi(x\rel y)$ 
can be related to the Euler characteristic of the associated
discrete braid class.

\begin{theorem}
\label{thm:discrete}
Let $[x\rel y]$ a proper relative braid class and $\ell\ge 0$ is an integer such that $(x\rel y)\cdot \Delta^{2\ell}$ is isotopic to a positive braid $x^+\rel y^+$.
Let
 $q_D\rel Q_D$ be an admissible discretization, for some $d>0$,   of a Legendrian representative $x_L\rel y_L \in [x^+\rel y^+]$.
Then 
$$
\chi(x\rel y)=\chi(q_D\rel Q^*_D),
$$
where $Q_D^*$ is the augmentation of $Q_D$ by adding the constant strands $  \pm 1$ to $Q_D$. 
\end{theorem}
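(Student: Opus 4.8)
The plan is to establish the identity $\chi(x\rel y)=\chi(q_D\rel Q^*_D)$ by a short chain of reductions, each of which leaves the Euler characteristic unchanged, ending with the identification of braid Floer homology of Legendrian braids with the discrete braid invariant $\HC_*$ of \cite{GVV}. The first step disposes of the full twists. By property (iii), $\HB_k\bigl([(x\rel y)\cdot\Delta^{2\ell}]\bigr)\cong\HB_{k-2\ell}([x\rel y])$, and since the grading shift $2\ell$ is even, the alternating sum defining \eqref{EulerFloer-1} is unchanged, so $\chi(x\rel y)=\chi\bigl((x\rel y)\cdot\Delta^{2\ell}\bigr)=\chi(x^+\rel y^+)$. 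By property (ii) the Euler-Floer characteristic depends only on the relative braid class, so we may also replace $x^+\rel y^+$ by its Legendrian representative $x_L\rel y_L$. It then suffices to prove $\chi(x_L\rel y_L)=\chi(q_D\rel Q^*_D)$ for an admissible discretization of $x_L\rel y_L$.

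The second step passes from the disc to the plane and is where the augmentation of the skeleton enters. Since $H$ vanishes on $\partial\D$, the boundary circle is an invariant set of the flow and, once one leaves $\D$, must be recorded as part of the skeleton; in Legendrian coordinates the two points $(\pm1,0)\in\partial\D$ become the constant strands $q\equiv\pm1$, which are precisely the strands adjoined in forming $Q^*_D$. I would show that $\HB_*\bigl([x_L\rel y_L]\bigr)$, computed on $\D$, coincides with the braid Floer homology of $x_L$ relative to the augmented skeleton $y^*_L$ obtained from $y_L$ by adding these two constant strands. The mechanism is a $C^0$-small perturbation of $H$ supported near $\partial\D$ which, by the isolation and monotonicity lemmas of \cite{GVVW}, alters neither $\Crit_{\A_H}$ nor the bounded solution set $\M$ in the proper class, but promotes $(\pm1,0)$ to genuine nondegenerate $1$-periodic orbits; properness of $[x\rel y]$ is exactly what forces these two, and no further, strands to be added.

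The heart of the argument is the third step: the Floer homology of the positive Legendrian braid equals the discrete braid invariant. For positive Legendrian braids the negative $L^2$-gradient flow of $\A_H$ admits a finite-dimensional reduction onto piecewise linear interpolations through the anchor points $q_0,\dots,q_d$ with $q_0=q_d$; on this space the Hamilton action becomes the discrete action of \cite{GVV} on the cube complex $[q_D]\rel Q^*_D$, its critical points correspond to the critical points of $\A_H$ in $[x_L\rel y^*_L]$ with matching indices up to a global shift, the discrete negative gradient flow is conjugate to an isolating flow on each component $N_j$, and the co-orientation of the faces of $N_j$ in the direction of decreasing $\Cross$ --- which defines the exit set $N_j^-$ --- corresponds to the unstable directions of the Floer differential. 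Hence, for $d$ large enough that $q_D\rel Q^*_D$ is admissible, $\HB_*\bigl([x_L\rel y^*_L]\bigr)\cong\HC_*\bigl([q_D\rel Q^*_D]\bigr)=\bigoplus_j H_*(N_j,N_j^-)$, and the latter is independent of $d$ and of the chosen fiber by \cite{GVV}. I expect this to be the main obstacle: one must verify that the discretization neither loses nor creates critical points or connecting orbits --- an argument combining admissibility with a priori bounds along the lines of \cite{GVV,GVVW} --- and that the exit-set co-orientation is compatible with the Conley-Zehnder grading up to the global shift, which is irrelevant for the Euler characteristic.

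It remains to read off the conclusion. Using that the Euler characteristic of a graded $\Z_2$-vector space equals that of any complex computing it, the three steps give $\chi(x\rel y)=\chi(x_L\rel y_L)$ (Step 1), $\chi(x_L\rel y_L)=\chi(x_L\rel y^*_L)$ (Step 2), $\chi(x_L\rel y^*_L)=\chi\bigl(\HC_*([q_D\rel Q^*_D])\bigr)$ (Step 3), and finally $\chi\bigl(\HC_*([q_D\rel Q^*_D])\bigr)=\sum_j\chi(N_j,N_j^-)=\chi(q_D\rel Q^*_D)$ by definition of the discrete Euler characteristic. Chaining these equalities yields $\chi(x\rel y)=\chi(q_D\rel Q^*_D)$, which is the assertion of Theorem \ref{thm:discrete}.
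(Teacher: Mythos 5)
There is a genuine gap, and it sits exactly where you place the ``heart of the argument''. Your Step 3 asserts an isomorphism $\HB_*\bigl([x_L\rel y^*_L]\bigr)\cong\HC_*\bigl([q_D\rel Q^*_D]\bigr)$, obtained by a finite-dimensional reduction of the negative $L^2$-gradient flow of $\A_H$ onto piecewise linear loops through the anchor points. This does not work as described: the negative $L^2$-gradient ``flow'' of $\A_H$ is the nonlinear Cauchy--Riemann equation, which is not a well-posed semiflow on the loop space, and $\A_H$ is strongly indefinite, so its critical points have infinite Morse index and there is no broken-geodesics-type reduction of the Floer complex onto a finite cube complex with ``matching indices up to a global shift''. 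The broken geodesics method applies to the \emph{Lagrangian} action of a mechanical system, and getting into that setting requires first replacing the given Hamiltonian by one of the form $\tfrac12 p^2-\tfrac12 q^2+g(q,t)$ on $\R^2$ whose skeleton is $y_L$ (Lemma \ref{specialHam}), with a priori bounds (Lemmas \ref{lem:special2}, \ref{lem:special4}) controlling the passage from $\D$ to $\R^2$. An isomorphism between braid Floer homology and the discrete Conley-index invariant is a far stronger statement than Theorem \ref{thm:discrete}; it is not proved in this paper (the introduction explicitly notes that the present results do \emph{not} compute the braid Floer homology), and reducing the theorem to it leaves the main work undone.

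The paper's route avoids this entirely by working only at the level of signed counts: by Proposition \ref{eqspfl}, $\chi(x\rel y)=-\deg_{LS}(\Phi_{\mu,H},\Omega,0)$; homotopy invariance of the Leray--Schauder degree (justified by the uniform estimates above) transports this degree to the mechanical Hamiltonian $H_L$ on $\R^2$, where Lemma \ref{index} identifies the Conley--Zehnder index with the Morse index $\beta(q)$ of the Lagrangian action $\LL_g$, giving $\chi(x\rel y)=\sum_{q\in\Crit(\LL_g)\cap([q]\rel Q)}(-1)^{\beta(q)}$ as in (\ref{eqn:finaldegree2}); the broken geodesics reduction (Lemma \ref{lem:same-index}) then converts this into the signed count of critical points of the discrete action $\W$, and the result of \cite{GVV} quoted in (\ref{eqn:from GVV}) says that this count equals $\chi\bigl(\HC_*([q_D]\rel Q_D)\bigr)$ for \emph{any} Morse function on the discrete class --- no comparison of homologies is ever needed. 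Two smaller points: your Step 2 mechanism (perturbing $H$ near $\partial\D$ to ``promote'' boundary points to nondegenerate orbits) is unnecessary and not what is done; the augmentation is purely topological, since for proper classes $[x\rel y]=[x\rel y^*]$, and the added strands are $y_\pm(t)=(0,\pm1)$, i.e.\ $q\equiv\pm1$, $p\equiv0$, not $(\pm1,0)$. Your Step 1 (evenness of the grading shift under $\Delta^{2\ell}$) agrees with the paper.
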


The idea behind the proof of Theorem \ref{thm:discrete} is to first relate $\chi(x\rel y)$ to mechanical Lagrangian systems and then use a discretization approach based on the method of broken geodesics.
Theorem \ref{thm:discrete} is proved in Section \ref{comp}.
In Section \ref{sec:examples} we use the latter to compute the Euler-Floer characteristic
for various examples of proper relative braid classes.

\subsection{Additional topological properties}
\label{subset:add}
%
In this paper we do not  address the question whether the closed integral curves  $x\rel y$ are non-constant, i.e. are not  
 equilibrium points.
 By considering relative braid classes
  where $x$ consists of more than one strand one can study non-constant closed
integral curves. Braid Floer homology 
for relative braids with $x$ consisting of $n$ strands is defined in \cite{GVVW}.
The ideas in this paper extend to relative braid classes with multi-strand braids $x$. In Section
\ref{sec:examples} we give an example of a multi-strand $x$ in $x\rel y$ and explain how this yields the existence of non-trivial closed integral curves.

The invariant $\chi\bigl(q_D\rel Q_D\bigr)$ is a true Euler characteristic and  
$$
\chi\bigl(q_D\rel Q_D\bigr) = \chi\bigl([q_D]\rel Q_D,[q_D]^-\rel Q_D\bigr),
$$
where $[q_D]^-\rel Q_D$ is the exit. A similar characterization does not a priori exist
for $[x]\rel y$. 
This problem is circumvented by considering Hamiltonian  systems  and carrying out Floer's approach towards Morse theory (see \cite{Floer1}),  by using the
isolation property of $[x]\rel y$.
The fact that the Euler characteristic of Floer homology is related to the Euler characteristic of
a topological pair indicates that Floer homology is a good substitute for a suitable (co)-homology theory.
For more details see Section \ref{comp} and Remark \ref{eqn:realEC}.

Braid Floer homology  developed for the 2-disc $\D$  can be extended to more general 2-dimensional manifolds.
This generalization of Braid Floer homology for  2-dimensional manifolds can then be used to
 extend the results in this paper to more general surfaces. 
\vskip.4cm

\noindent {\bf Acknowledgment.} The authors wish to thank J.B. van den Berg for the many stimulating discussions on the subject of Braid Floer homology.

\section{Closed integral curves}
Let $X\in \vF(\D\times \R/\Z)$, then closed 
 integral curves of $X$ of period 1 satisfy the differential equation
\begin{equation}\label{general vf}
\left\{
\begin{array}{ll}
\displaystyle \frac{dx}{dt}=X(x,t), \quad x\in \D, ~t\in \R/\Z,\\
x(0)=x(1).
\end{array}
\right.
\end{equation}
%
%
Consider the unbounded operator 
 $L_{\mu}:C^1(\R/\Z) \subset C^0(\R/\Z)  \to C^0(\R/\Z)$, defined by
 $$
 L_{\mu}:= - J\frac{d}{dt}+\mu, \quad \mu \in \R.
 $$  
 The operator 
  is invertible for  $\mu\not= 2\pi k, k\in \mathbb{Z}$ and
 the inverse $L_{\mu}^{-1}: C^0(\R/\Z) \to C^0(\R/\Z)$ is  compact.
Transforming Equation (\ref{general vf}), using $L_\mu^{-1}$, yields 
  the equation  $\Phi_{\mu}(x) =0$, where
 $$
\Phi_{\mu}(x):=x-L_{\mu}^{-1}\bigl(-JX(x,t)+\mu x\bigr).
$$ 
If we set 
$$
K_{\mu}(x):=L_{\mu}^{-1}\bigl(-JX(x,t)+\mu x\bigr),
$$
then  $\Phi_{\mu}$ is of the form $\Phi_\mu(x) = x -K_{\mu}(x)$, where $K_\mu$
is a (non-linear) compact operator on $C^0(\R/\Z)$.
Since $X$ is a smooth vector field the mapping   $\Phi_{\mu}$ is
a smooth mapping on $C^0(\R/\Z)$.

\begin{proposition}
\label{prop:equiv1}
A function $x\in C^0(\R/\Z)$, with $|x(t)|\le 1$ for all $t$, is a solution of $\Phi_{\mu}(x) = 0$ if and only if
$x\in C^1(\R/\Z)$ and $x$ satisfies Equation (\ref{general vf}).
\end{proposition}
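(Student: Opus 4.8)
The plan is to prove the equivalence by unwinding the definitions and using elliptic regularity for the operator $L_\mu = -J\frac{d}{dt} + \mu$. The statement is essentially that the fixed-point reformulation $\Phi_\mu(x)=0$ on $C^0(\R/\Z)$ is equivalent to the original periodic ODE problem (\ref{general vf}) on $C^1(\R/\Z)$. Both directions are routine once one keeps track of where the regularity comes from, but the only-if direction (bootstrapping a merely continuous solution up to $C^1$) is where the real content lies.

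First I would handle the easy implication: if $x\in C^1(\R/\Z)$ solves (\ref{general vf}), then $\frac{dx}{dt} = X(x,t)$, so $-J\frac{dx}{dt} + \mu x = -JX(x,t) + \mu x$, i.e.\ $L_\mu x = -JX(x,t)+\mu x$ in $C^0(\R/\Z)$. Since $\mu\neq 2\pi k$ makes $L_\mu$ invertible, applying $L_\mu^{-1}$ gives $x = L_\mu^{-1}\bigl(-JX(x,t)+\mu x\bigr) = K_\mu(x)$, hence $\Phi_\mu(x)=0$. The periodicity $x(0)=x(1)$ is automatic because everything lives on $\R/\Z$. Note $|x(t)|\le 1$ is not needed here; it is needed in the converse only to make sense of $X(x(t),t)$, since $X$ is defined on $\D$.

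For the converse: suppose $x\in C^0(\R/\Z)$ with $|x(t)|\le 1$ and $\Phi_\mu(x)=0$, i.e.\ $x = L_\mu^{-1}\bigl(-JX(x,t)+\mu x\bigr)$. Because $x$ is continuous and $|x(t)|\le 1$, the map $t\mapsto -JX(x(t),t)+\mu x(t)$ is continuous, i.e.\ lies in $C^0(\R/\Z)$. The key point is that $L_\mu^{-1}$ maps $C^0(\R/\Z)$ into $C^1(\R/\Z)$ — indeed $z := L_\mu^{-1}w$ is by definition the unique periodic solution of $-J\dot z + \mu z = w$, and solving this linear ODE explicitly (variation of constants, using that $-J$ generates the rotation group $e^{Jt\cdot}$ appropriately) shows $z\in C^1$ with $\dot z = J(\mu z - w)$, continuously depending on $w$. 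Hence $x\in C^1(\R/\Z)$. Now apply $L_\mu$ to both sides of $x = L_\mu^{-1}(-JX(x,t)+\mu x)$: since $x\in C^1$ we get $-J\dot x + \mu x = -JX(x,t)+\mu x$, so $-J(\dot x - X(x,t)) = 0$, and as $J$ is invertible, $\dot x = X(x,t)$. Periodicity $x(0)=x(1)$ holds since $x\in C^1(\R/\Z)$. This establishes (\ref{general vf}).

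The main obstacle — really the only non-formal step — is justifying that $L_\mu^{-1}$ gains a derivative, i.e.\ $L_\mu^{-1}\bigl(C^0(\R/\Z)\bigr)\subset C^1(\R/\Z)$, together with the fact that the resulting identity $L_\mu L_\mu^{-1} = \Id$ holds on $C^0$; this is where the invertibility hypothesis $\mu\neq 2\pi k$ enters (otherwise $L_\mu$ has a kernel of constant-modulus rotations and the inverse does not exist). In practice one can either invoke the explicit Green's function / Fourier-series representation for $L_\mu^{-1}$ on the circle, or cite the standard theory of the operator $-J\frac{d}{dt}$ already set up in \cite{GVVW}. Everything else is bookkeeping with the definitions of $\Phi_\mu$ and $K_\mu$.
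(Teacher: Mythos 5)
Your proof is correct and follows essentially the same route as the paper's: the forward direction is immediate, and the converse rests on the regularity gain $R(L_{\mu}^{-1})\subset C^{1}(\R/\Z)$ followed by applying $L_{\mu}$ to both sides of $x=K_{\mu}(x)$, exactly as in the paper. Your extra discussion of why $L_{\mu}^{-1}$ gains a derivative merely elaborates the property the paper invokes without proof.
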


\begin{proof}
If $x\in C^{1}(\R/\Z;\D)$ is a solution of Equation (\ref{general vf}), then $\Phi_{\mu}(x) =0$ is obviously satisfied. On the other hand, if $x\in 
C^{0}(\R/\Z;\D)$ is a zero of $\Phi_{\mu}$, then $x = K_{\mu}(x) \in C^{1}(\R/\Z)$, since $R(L_{\mu}^{-1}) \subset C^{1}(\R/\Z)$. Applying $L_{\mu}$ to both sides
 shows that $x$ satisfies Equation (\ref{general vf}).
\end{proof}

Note that the zero set $\Phi_{\mu}^{-1}(0)$ does not depend on the parameter $\mu$.
In order to apply  the  Leray-Schauder degree theory we consider appropriate bounded, open subsets $\Omega \subset C^0(\R/\Z)$, which have the property
that $\Phi_{\mu}^{-1}(0) \cap \partial \Omega =\varnothing$. Let $\Omega = [x]\rel y$, where $[x]\rel y$ is a proper relative braid fiber,
and $y = \{y^1,\cdots,y^m\}$ is  a skeleton of closed integral curves for  the vector field $X$.  

\begin{proposition} 
\label{lemma isolation}
Let $[x\rel y]$ be a proper relative braid class and let $\Omega = [x]\rel y$ be the fiber given by $y$.
Then, there exists an $0<r<1$ such that 
$$
|x(t)| < r, ~~\hbox{and~~}~~|x(t) - y^{j}(t)| >1-r,\quad \forall ~j=1,\cdots,m,\quad \forall ~t\in \R,
$$
  and for all $x\in \Phi_{\mu}^{-1}(0) \cap \Omega = \{x\in \Omega~|~x=K_\mu(x)\}$.
\end{proposition}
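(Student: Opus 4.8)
The plan is to establish the two inequalities as purely topological/compactness consequences of properness, with the $C^0$-to-$C^1$ bootstrap of Proposition~\ref{prop:equiv1} doing the analytic work. First I would observe that any $x \in \Phi_\mu^{-1}(0)\cap\Omega$ is automatically in $C^1(\R/\Z;\D)$ and solves Equation~(\ref{general vf}) by Proposition~\ref{prop:equiv1}; moreover, since $X$ is tangent to $\partial\D$, the disc $\D$ is invariant under the flow, so in fact $|x(t)|\le 1$ for all $t$ and the loop $x\rel y$ genuinely lies in $\Lc\bC_{n+m}(\D)$. The point is then that the fiber $\Omega=[x]\rel y$ is a \emph{proper} braid class, so by definition no component of $x$ can be deformed onto $\partial\D$ nor onto a strand of $y$; hence for \emph{every} loop in the fiber (not just critical ones) the strand $x$ stays uniformly away from the boundary and from each $y^j$.

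The key step is to turn "uniformly away" into an explicit $r$. I would argue by contradiction and compactness. Suppose no such $r\in(0,1)$ works; then there is a sequence $x_k \in \Phi_\mu^{-1}(0)\cap\Omega$ with either $\max_t |x_k(t)| \to 1$ or $\min_{j,t}|x_k(t)-y^j(t)| \to 0$. Since each $x_k$ solves $x_k = K_\mu(x_k)$ with $K_\mu$ compact and the $x_k$ are uniformly bounded in $C^0$, the images $K_\mu(x_k)$ lie in a compact subset of $C^0(\R/\Z)$, so after passing to a subsequence $x_k \to x_\infty$ in $C^0$; continuity of $K_\mu$ gives $x_\infty = K_\mu(x_\infty)$, so $x_\infty \in \Phi_\mu^{-1}(0)$ and, by Proposition~\ref{prop:equiv1}, $x_\infty \in C^1(\R/\Z;\D)$ solves Equation~(\ref{general vf}). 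In the first case $x_\infty$ touches $\partial\D$; since $\D$ is flow-invariant and $X$ is tangent to $\partial\D$, uniqueness of solutions forces $x_\infty$ to lie entirely on $\partial\D$, i.e. its $x$-component is deformable onto the boundary --- but then $x_\infty \rel y$ either leaves the proper class $[x\rel y]$ (contradicting $x_k\to x_\infty$ with $x_k$ in the class, using that the class is relatively open/isolated) or exhibits $[x\rel y]$ as improper, a contradiction. In the second case $x_\infty(t_0)=y^j(t_0)$ for some $j,t_0$; since $x_\infty$ and $y^j$ both solve the same equation, uniqueness forces $x_\infty \equiv y^j$, so again the limiting braid is singular and $x_\infty\rel y$ is not a genuine element of $\bC_{n+m}(\D)$, contradicting that $[x]\rel y$ is a proper fiber and $x_k\to x_\infty$ stays in it.

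More carefully, the cleanest route avoids invoking "openness of the braid class in $C^0$" directly: I would instead note that the set $\M := \Phi_\mu^{-1}(0)\cap\clos(\Omega)$ of $C^1$ solutions in the \emph{closure} of the fiber is compact (same compactness-of-$K_\mu$ argument), and that properness of $[x\rel y]$ is exactly the statement that $\M$ contains no singular braids --- i.e. $\clos(\Omega)\cap\Phi_\mu^{-1}(0) = \Omega\cap\Phi_\mu^{-1}(0)$, with every such $x$ satisfying $|x(t)|<1$ strictly and $|x(t)-y^j(t)|>0$ strictly for all $t$. This is precisely the isolation property for proper relative braid classes recalled in the introduction (the set of bounded solutions is compact and isolated). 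Then the two functions $x\mapsto \max_t |x(t)|$ and $x\mapsto \min_{j,t}|x(t)-y^j(t)|$ are continuous on the compact set $\M$, the first attaining a maximum $<1$ and the second a minimum $>0$; choosing $r\in(0,1)$ with $r$ larger than the first maximum and $1-r$ smaller than the second minimum (possible since both are strict) gives the claim.

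The main obstacle I anticipate is the bookkeeping around \emph{why} properness rules out the two degenerations --- i.e. making precise that a $C^1$ solution in $\clos(\Omega)$ touching $\partial\D$ or colliding with a $y^j$ would contradict the definition of a proper relative braid class. This rests on two facts that must be used carefully: (a) invariance of $\D$ under the flow of $X$ together with uniqueness of ODE solutions, which forces a boundary-touching solution to be a boundary solution and a colliding solution to coincide with a skeleton strand; and (b) the precise definition of properness from \cite{GVVW}, under which such a solution would witness the $x$-component being deformable onto $\partial\D$ or onto $y$, contradicting the standing hypothesis that $[x\rel y]$ is proper. Everything else --- the compactness of $K_\mu$, the $C^0\to C^1$ regularity, and the extraction of $r$ --- is routine given the tools already set up in the excerpt.
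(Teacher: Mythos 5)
Your argument is correct and follows essentially the same route as the paper's proof: compactness of $\Phi_\mu^{-1}(0)\cap\Omega$ via compactness of $K_\mu$, then a contradiction in which a limiting solution either touches $\partial\D$ (and, by tangency of $X$ to $\partial\D$ plus uniqueness of ODE solutions, must lie entirely on the boundary) or collides with a strand $y^j$ (and by uniqueness must coincide with $y^j$), both of which contradict properness of $[x\rel y]$. The only slip is the aside claiming that \emph{every} loop in the fiber stays uniformly away from $\partial\D$ and from $y$ --- that is false (loops in the fiber can come arbitrarily close without touching), but your actual argument never uses it, since the uniform $r$ is extracted only on the compact solution set, exactly as in the paper.
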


\begin{proof}
Since $\Omega\subset C^0(\R/\Z)$ is a bounded set and $K_\mu$ is compact, the solution
set $\Phi^{-1}_{\mu}(0)\cap\Omega$ is compact. Indeed, let $x_n= K_\mu(x_n)$ be a sequence in $\Phi^{-1}_{\mu}(0)\cap\Omega$, then $K_\mu(x_{n_k}) \to x$, and thus $x_{n_k} \to x$, which, by continuity, implies that
$K_\mu(x_{n_k}) \to K_\mu(x)$, and thus $x\in \Phi^{-1}_{\mu}(0)\cap\Omega$.

Let $x_{n} \in \Phi^{-1}_{\mu}(0)\cap\Omega$ and assume that such an  $0<r<1$ does not exist. Then, by the compactness of $\Phi_{\mu}^{-1}(0)\cap \Omega$,
there is a subsequence $x_{n_{k}}\to x$ such that
one, or both of the following two possibilities hold:
(i) $|x(t_0)|=1$ for some $t_0$. By the uniqueness of solutions of Equation (\ref{general vf}) and the invariance of the boundary $\partial \D$ ($X(x,t)$ is tangent to the boundary),
$|x(t)| =1$ for all $t$, which  is impossible since $[x] \rel y$ is  proper;
(ii) $x(t_0) = y^j(t_0)$ for some $t_0$ and some $j$. As before, by the uniqueness of solutions of Equation (\ref{general vf}), then $x(t)=y^j(t)$ for all $t$, which again contradicts the fact that
$[x]\rel y$ is  proper.
\end{proof}

By Proposition \ref{lemma isolation} the Leray-Schauder degree $\deg_{LS}(\Phi_{\mu},\Omega,0)$ is well-defined.  
Consider the 
 Hamiltonian vector field  
\begin{equation}\label{hamiltonian vector}
X_H=J \nabla H, \quad 
J=\left(
            \begin{array}{cc}
              0 & -1\\
              1 & ~~0
              \end{array}
     \right),
\end{equation}
where $H(x,t)$ is a smooth Hamiltonian such that $X_H \in \vF(\D\times \R/\Z)$ and $y$ is a skeleton for $X_H$. Such  a Hamiltonian can always be constructed, see \cite{GVVW},
and the class of such Hamiltonians will be denote by $\Hh(y)$.
Since $y$ is a skeleton for both $X$ and $X_H$, it is a skeleton   
for  the linear homotopy
$X_{\alpha}=(1-\alpha)X+\alpha X_{H}$, $\alpha\in [0,1]$.
Associated with the homotopy $X_\alpha$ of vector fields we define the homotopy
$$
\Phi_{\mu,\alpha}(x):=x-L_{\mu}^{-1}\bigl(-JX_\alpha(x,t)+\mu x\bigr) = x-K_{\mu,\alpha}(x),\quad \alpha \in [0,1],
$$
with $K_{\mu,\alpha}(x) = L_{\mu}^{-1}\bigl(-JX_\alpha(x,t)+\mu x\bigr)$.
Proposition \ref{lemma isolation}   applies for all $\alpha\in [0,1]$, i.e. by compactness there exists a 
uniform $0<r<1$ such that 
$$
|x(t)| < r, ~~\hbox{and~~}~~|x(t) - y^{j}(t)| >1-r,
$$
for all $t\in \R$, for all $j$ and  for all $x\in \Phi_{\mu,\alpha}^{-1}(0) \cap \Omega = \{x\in \Omega~|~x=K_{\mu,\alpha}(x) \}$ and  all $\alpha \in [0,1]$.
  By the homotopy invariance of the Leray-Schauder degree we have
\begin{equation}
\label{homtop}
\deg_{LS}(\Phi_{\mu},\Omega,0)=\deg_{LS}(\Phi_{\mu,\alpha},\Omega,0)=\deg_{LS}(\Phi_{\mu,H},\Omega,0),
\end{equation}
where $\Phi_{\mu,0} = \Phi_{\mu}$ and $\Phi_{\mu,1} = \Phi_{\mu,H}$.
Note that the zeroes of $\Phi_{\mu,H}$
 correspond to   critical point of the functional 
\begin{equation}\label{action}
\A_H(x)=\int_0^1 \tfrac{1}{2} J x\cdot x_t - H(x,t) dt,
\end{equation}
and are denoted by $\Crit_{\A_H}([x]\rel y)$.
In \cite{GVVW} invariants are defined which provide information about $\Phi_{\mu,H}^{-1}(0)\cap \Omega = \Crit_{\A_H}([x]\rel y)$ and thus $\deg_{LS}(\Phi_{\mu,H},\Omega,0)$.
These invariants are the Braid Floer homology groups $\HB_*\bigl([x]\rel y\bigr)$ as explained in the introduction. In the next section we
examine spectral properties of the solutions of $\Phi_{\mu,\alpha}^{-1}(0)\cap \Omega$ in order to compute $\deg_{LS}(\Phi_{\mu,H},\Omega,0)$ and
thus $\deg_{LS}(\Phi_{\mu},\Omega,0)$.

\begin{remark}
\label{rmk:other}
{\em
There is obviously more room for choosing appropriate operators $L_{\mu}$ and therefore functions $\Phi_{\mu}$. In 
Section \ref{sec:proof} this issue will be discussed in more detail.
}
\end{remark}
\section{Parity, Spectral flow and the Leray-Schauder degree}
The
  Leray-Schauder degree of an isolated zero $x$ of $\Phi_{\mu}(x) =0$ is called the local degree.
  A zero $x \in \Phi_{\mu}^{-1}(0)$ is non-degenerate if $1\not\in \sigma(D_xK_{\mu}(x))$,  where
  $D_x K_{\mu}(x): C^0(\R/\Z) \to C^0(\R/\Z)$ is
the (compact) linearization at $x$ and is given by $D_x K_\mu(x) = L^{-1}_{\mu}(-JD_x X(x,t)+\mu)$.
  If $x$ is a non-degenerate zero, then it is an isolated zero and
  the degree can be determined from
spectral information.

\begin{proposition}\label{beta}
Let $x\in C^0(\R/\Z)$ be a non-degenerate zero of $\Phi_{\mu}$ and let $\epsilon>0$ be sufficiently small such that
$B_\epsilon(x) = \bigl\{\tilde x\in C^0(\R/\Z)~|~|\tilde x(t) - x(t)|<\epsilon,\forall t\bigr\}$ 
is a neighborhood in which $x$ is the only zero.
 Then
$$
\deg_{LS}\bigl(\Phi_{\mu},B_\epsilon(x),0\bigr)=\deg_{LS}\bigl(\Id-D_x K_{\mu}(x),B_\epsilon(x),0\bigr) =(-1)^{\beta_{\mu}(x)}
$$
where  
$$
\beta_{\mu}(x)=\sum_{\sigma_{j}>1,\ \sigma_{j}\in\sigma(D_x K_{\mu}(x))}\beta_{j},\quad \beta_{j}=\dim\left(\bigcup_{i=1}^{\infty}\ker\bigl(\sigma_{j}\Id-D_x K_{\mu}(x)\bigr)^i\right),
$$
which will be referred to as the Morse index of $x$, or alternatively the Morse index of
linearized operator $D_x\Phi_\mu(x)$.
\end{proposition}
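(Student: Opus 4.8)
The plan is to pass from $\Phi_\mu$ to its linearization at $x$, and then to evaluate the Leray--Schauder degree of a linear ``identity $+$ compact'' operator by splitting off its finite-dimensional ``unstable'' part via Riesz--Schauder theory.

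First I would translate so that the zero sits at the origin and write, for $z$ in a small ball, $\Phi_\mu(x+z) = Az + N(z)$ with $A := \Id - D_xK_\mu(x)$ and $N(z) = o(\|z\|_{C^0})$; since $K_\mu$ is compact and smooth, $N$ is a compact map with $N(0)=0$ and $D_xN(0)=0$. As $x$ is non-degenerate, $A$ is an isomorphism, so $\|Az\|\ge \|A^{-1}\|^{-1}\|z\|$, and for $\epsilon$ small the affine homotopy $h(s,z)=Az+sN(z)$, $s\in[0,1]$, is zero-free on $\partial B_\epsilon(0)$ and of the form ``identity $+$ compact''. Homotopy invariance of the Leray--Schauder degree then yields the first equality, $\deg_{LS}(\Phi_\mu,B_\epsilon(x),0)=\deg_{LS}(A,B_\epsilon(x),0)$, and it remains to compute the right-hand side.

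For the second equality, set $T:=D_xK_\mu(x)$, a compact operator on $C^0(\R/\Z)$ with $1\notin\sigma(T)$. By Riesz--Schauder theory, $\sigma(T)\setminus\{0\}$ consists of eigenvalues of finite algebraic multiplicity that can accumulate only at $0$; hence $\Lambda := \sigma(T)\cap(1,\infty)=\{\sigma_1,\dots,\sigma_k\}$ is finite, and each generalized eigenspace $E_j:=\bigcup_{i\ge 1}\ker(\sigma_j\Id-T)^i$ is finite-dimensional of dimension $\beta_j$. The Riesz projection attached to $\Lambda$ produces a $T$-invariant topological direct sum $C^0(\R/\Z)=E\oplus F$ with $E=\bigoplus_j E_j$ finite-dimensional, $\dim E=\beta_\mu(x)$, $F$ closed, $\sigma(T|_E)=\Lambda$, and $\sigma(T|_F)=\sigma(T)\setminus\Lambda$, which by construction together with $1\notin\sigma(T)$ satisfies $\sigma(T|_F)\cap[1,\infty)=\varnothing$. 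Relative to this splitting $A=(\Id_E-T|_E)\oplus(\Id_F-T|_F)$; using excision to replace $B_\epsilon(x)$ by a product of small balls $U_E\subset E$ and $U_F\subset F$ about the origin, and the multiplicativity of the Leray--Schauder degree over such products (with the finite-dimensional factor reducing to a Brouwer degree), I get $\deg_{LS}(A,B_\epsilon(x),0)=\deg(\Id_E-T|_E,U_E,0)\cdot\deg_{LS}(\Id_F-T|_F,U_F,0)$. The $F$-factor equals $1$: the path $s\mapsto\Id_F-sT|_F$, $s\in[0,1]$, is ``identity $+$ compact'' and remains invertible because $1/s\notin\sigma(T|_F)$ for every $s\in(0,1]$, so it deforms $\Id_F-T|_F$ to $\Id_F$. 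The $E$-factor is the Brouwer degree of a linear automorphism of the finite-dimensional space $E$ whose eigenvalues are precisely $\sigma_1,\dots,\sigma_k$ with algebraic multiplicities $\beta_1,\dots,\beta_k$, all real and larger than $1$; hence it equals $\sgn\det(\Id_E-T|_E)=\sgn\prod_j(1-\sigma_j)^{\beta_j}=(-1)^{\beta_\mu(x)}$. Combining the two factors gives $\deg_{LS}(\Phi_\mu,B_\epsilon(x),0)=(-1)^{\beta_\mu(x)}$.

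The homotopy estimate in the first step and the sign computation in the last are routine. The step that needs the most care is the middle one: producing the $T$-invariant Riesz decomposition $E\oplus F$ with the stated spectral properties, and invoking the excision together with the multiplicativity (finite-dimensional reduction) of the Leray--Schauder degree across it --- in particular verifying that $F$ carries no real spectral value $\ge 1$, which is exactly what makes the deformation of $\Id_F-T|_F$ to the identity admissible.
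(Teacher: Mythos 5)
Your argument is correct: it is the standard proof of the Leray--Schauder index theorem (reduction to the linearization by a small admissible homotopy, Riesz--Schauder splitting of $C^0(\R/\Z)$ into the finite-dimensional generalized eigenspace over the real eigenvalues $\sigma_j>1$ and a complement carrying no real spectrum $\ge 1$, the product property of the degree, and the sign of the finite-dimensional determinant), which is precisely the classical result the paper invokes by simply citing \cite{Lloyd} without reproducing a proof. The only step worth making explicit is that the Riesz projection is constructed on the complexification and, because the spectral set $\{\sigma_j>1\}$ is real and conjugation-symmetric, it commutes with conjugation and hence restricts to the real Banach space, so the $T$-invariant splitting $E\oplus F$ you use is legitimate.
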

\begin{proof}
See \cite{Lloyd}.
\end{proof}


The functions $\Phi_{\mu,\alpha}(x) = x - K_{\mu,\alpha}(x)$ are of the form `identity + compact'  and Proposition \ref{beta} can be applied to non-degenerate zeroes
of $\Phi_{\mu,\alpha}(x) = 0$. If we choose the Hamiltonian $H \in \HH(y)$ `generically', then the zeroes of $\Phi_{\mu,H}$ are non-degenerate, i.e. $1 \not \in \sigma(D_x K_{\mu,H}(x))$, where $D_x K_{\mu,H}(x) = D_x K_{\mu,1}(x)$. By compactness there are only finitely many zeroes in a fiber $\Omega = [x]\rel y$.
\begin{lemma}
\label{nondeg}
Let  $x\in \Phi_{\mu,H}^{-1}(0)\cap \Omega$. Then following criteria for non-degeneracy are equivalent:
\begin{enumerate}
\item [(i)]  $1 \not \in \sigma(D_x K_{\mu,H}(x))$;
\item [(ii)] the operator $B=-J\frac{d}{dt} -D^2_xH(x(t),t)$ is invertible;
\item [(iii)] let $\Psi(t)$ be defined by $B\Psi(t) =0$, $\Psi(0) = \Id$, then $\det(\Psi(1) -\Id)\neq 0$.
\end{enumerate}
\end{lemma}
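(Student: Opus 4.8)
The plan is to prove the chain of equivalences by relating the compact linear operator $\Id - D_xK_{\mu,H}(x)$ on $C^0(\R/\Z)$ to the constant-coefficient-free differential operator $B = -J\frac{d}{dt} - D^2_xH(x(t),t)$ on $C^1(\R/\Z)$, and then relating the kernel of $B$ to the monodromy matrix $\Psi(1)$. For the implication (i)$\Leftrightarrow$(ii), I would first observe that $D_xK_{\mu,H}(x) = L_\mu^{-1}\bigl(-JD^2_xH(x(t),t) + \mu\bigr)$, so that
$$
\Id - D_xK_{\mu,H}(x) = L_\mu^{-1}\Bigl(L_\mu + JD^2_xH(x(t),t) - \mu\Bigr) = L_\mu^{-1}\Bigl(-J\tfrac{d}{dt} + JD^2_xH(x(t),t)\Bigr) = -L_\mu^{-1}J\,B.
$$
Since $L_\mu^{-1}$ is a bijection from $C^0$ onto $C^1 \subset C^0$ (it is the inverse of the invertible operator $L_\mu$ for $\mu \neq 2\pi k$), and $J$ is an invertible matrix, the operator $\Id - D_xK_{\mu,H}(x)$ has nontrivial kernel in $C^0$ if and only if $B$ has nontrivial kernel, and more precisely $\ker(\Id - D_xK_{\mu,H}(x)) = \ker B$. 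Hence $1 \notin \sigma(D_xK_{\mu,H}(x))$, which for a compact perturbation of the identity is equivalent to $\Id - D_xK_{\mu,H}(x)$ being invertible, is equivalent to $B$ being injective on $C^1(\R/\Z)$; and injectivity of $B$ is equivalent to its invertibility since $B$ is a first-order elliptic operator on the circle with the same index as $-J\frac{d}{dt}$, namely zero (alternatively, one can invoke that $B$ differs from the invertible $L_\mu$-type operator by a bounded perturbation with the Fredholm index unchanged).

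For (ii)$\Leftrightarrow$(iii), I would use the standard correspondence between periodic solutions of the linear Hamiltonian system and the monodromy matrix. A function $\psi \in C^1(\R/\Z)$ lies in $\ker B$ precisely when $\psi_t = J\,D^2_xH(x(t),t)\,\psi$ and $\psi(0) = \psi(1)$. Writing the fundamental solution $\Psi(t)$ with $\Psi(0) = \Id$, every solution of the ODE is $\psi(t) = \Psi(t)\psi(0)$, so the periodicity constraint becomes $\Psi(1)\psi(0) = \psi(0)$, i.e. $\psi(0) \in \ker(\Psi(1) - \Id)$. Therefore $\ker B$ is isomorphic (via $\psi \mapsto \psi(0)$) to $\ker(\Psi(1) - \Id)$, so $B$ is invertible if and only if $\ker(\Psi(1) - \Id) = \{0\}$, which is exactly $\det(\Psi(1) - \Id) \neq 0$.

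The only real subtlety — and the step I would be most careful about — is the passage in (i)$\Leftrightarrow$(ii) from "$1 \notin \sigma(D_xK_{\mu,H}(x))$" to "$B$ invertible as an operator $C^1(\R/\Z) \to C^0(\R/\Z)$," rather than merely "$B$ injective." The identity $\Id - D_xK_{\mu,H}(x) = -L_\mu^{-1}J B$ shows that injectivity of the two operators coincides and that surjectivity of $\Id - D_xK_{\mu,H}(x)$ on $C^0$ forces surjectivity of $B$ onto $C^0$ (since $-L_\mu^{-1}J$ maps $C^0$ onto $C^1$ bijectively); conversely, Fredholm theory for the compact perturbation $\Id - D_xK_{\mu,H}(x)$ gives that injectivity already implies invertibility, which via the identity transfers back to $B$. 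I would also note at the outset that the zeroes of $\Phi_{\mu,H}$ lie in the open disc by Proposition \ref{lemma isolation}, so $D^2_xH(x(t),t)$ is a well-defined smooth loop of symmetric matrices and all the operators above are genuinely defined, and that the equivalence is independent of $\mu$ (as it must be, since $\mu$ does not appear in (ii) or (iii)) because $\ker(\Id - D_xK_{\mu,H}(x)) = \ker B$ for every admissible $\mu$.
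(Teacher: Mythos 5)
Your proof follows essentially the same route as the paper's: the paper disposes of (i)$\Leftrightarrow$(ii) by the one-line observation that $D_xK_{\mu,H}(x)\psi=\psi$ if and only if $B\psi=0$, and refers to \cite{GVVW} for (ii)$\Leftrightarrow$(iii); your monodromy argument for the latter is the standard one and correctly fills in what the paper only cites. One algebraic slip should be corrected, although it is harmless for the conclusion: since $X_H=J\nabla H$ one has $-JD_xX_H=D^2_xH$, so the linearization is $D_xK_{\mu,H}(x)=L_\mu^{-1}\bigl(D^2_xH(x(t),t)+\mu\bigr)$ (as the paper records when introducing the operator $A$), not $L_\mu^{-1}\bigl(-JD^2_xH+\mu\bigr)$; consequently the factorization reads $\Id-D_xK_{\mu,H}(x)=L_\mu^{-1}B$, and your intermediate identity $-J\tfrac{d}{dt}+JD^2_xH=-JB$ is false as written, since $-JB=-\tfrac{d}{dt}+JD^2_xH$. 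Because the corrected prefactor $L_\mu^{-1}$ is injective on $C^0(\R/\Z)$ with range $C^1(\R/\Z)$, your conclusions --- $\ker\bigl(\Id-D_xK_{\mu,H}(x)\bigr)=\ker B$, and the transfer of invertibility through the index-zero Fredholm property --- go through unchanged, and in fact become slightly cleaner with the correct formula.
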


\begin{proof}
A function $\psi$ satisfies $D_x K_{\mu,H}(x) \psi = \psi$ if and only if $B\psi =0$, which shows the equivalence between (i) and (ii).
The equivalence between (ii) and (iii) is proved in \cite{GVVW}.
\end{proof}

The generic choice of $H$ follows from Proposition 7.1 in  \cite{GVVW} based on  criterion (iii).
Hamiltonians for which the zeroes of $\Phi_{\mu,H}$ are non-degenerate are denoted by $\HH(y)$.
Note that \emph{no} genericity is needed for $\alpha \in [0,1)$! For the Leray-Schauder degree this yields
\begin{equation}
\label{formula1}
\deg_{LS}(\Phi_{\mu,\alpha},\Omega,0)=\deg_{LS}(\Phi_{\mu,H},\Omega,0)=\sum_{x\in \Crit_{\A_H}([x]\rel y)}(-1)^{\beta_{\mu,H}(x)}, 
\end{equation}
for all  $ \alpha \in [0,1]$ and where $\beta_{\mu,H}(x)$ is the Morse index of $\Id - D_x K_{\mu,H}(x)$.

The goal is to determine  the Leray-Schauder degree $\deg_{LS}(\Phi_{\mu},\Omega,0)$ from information contained in the Braid Floer homology groups $\HB_*([x]\rel y)$.
In order to do so we examen the Hamiltonian case.
In the Hamiltonian case the linearized operator $D_x\Phi_{\mu,H}(x)$ is given by
$$
A := D_x\Phi_{\mu,H}(x) = \Id - D_x K_{\mu,H}(x) = \Id - L_\mu^{-1} \bigl( D^2_x H(x(t),t) + \mu\bigr),
$$
which is a bounded operator on $C^0(\R/\Z)$. 
The operator $A$ extends to a bounded operator on $L^2(\R/\Z)$.
 %
%
%
%
%
%
%
%
%
%
Consider 
the path $\eta\mapsto A(\eta)$, $\eta\in I=[0,1]$, given by
\begin{equation}
\label{choice1}
A(\eta)=\Id-L_{\mu}^{-1}(S(t;\eta)+\mu) = \Id - T_\mu(\eta), 
\end{equation}
where
$S(t;\eta)$ a smooth family of symmetric matrices and $T_{\mu}(\eta) = L_{\mu}^{-1}(S(t;\eta)+\mu)$.
The endpoints satisfy
$$
S(t;0)=\theta \Id, \quad S(t;1)=D^{2}_xH(x(t),t),
$$
with $\theta\not=2\pi k$, for some $k\in \mathbb{Z}$ and $D^{2}_xH(x(t),t)$ is the Hessian of $H$ at a critical point in 
 $\Crit_{\A_H}([x]\rel y)$.
The path of $\eta\mapsto A(\eta)$ is a path bounded linear Fredholm operators on $L^2(\R/\Z)$ of Fredholm index 0, which are 
  compact   perturbations of the identity and
  whose endpoints are
  invertible. 
\begin{lemma}
\label{lem:all-s}
The path $\eta\mapsto A(\eta)$ defined in (\ref{choice1}) is a smooth path of
bounded linear Fredholm operators in
$H^{s}(\R/\Z)$ of index $0$, with invertible endpoints.
\end{lemma}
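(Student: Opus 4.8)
The plan is to verify, one mapping space at a time, that $A(\eta) = \Id - T_\mu(\eta)$ with $T_\mu(\eta) = L_\mu^{-1}(S(t;\eta)+\mu)$ extends from $C^0(\R/\Z)$ to a bounded operator on each Sobolev space $H^s(\R/\Z)$, that it is Fredholm of index $0$ there, and that the $\eta$-dependence is smooth, with invertibility at the endpoints $\eta = 0, 1$ inherited from the non-degeneracy hypothesis (Lemma \ref{nondeg}). The key structural fact is that $L_\mu = -J\frac{d}{dt} + \mu$, for $\mu \neq 2\pi k$, is an isomorphism from $H^{s+1}(\R/\Z)$ onto $H^s(\R/\Z)$ — this is immediate by Fourier series, since the symbol is $-2\pi k i J + \mu$, whose determinant $\mu^2 - (2\pi k)^2 \neq 0$ is bounded away from zero and grows like $k^2$. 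Hence $L_\mu^{-1}: H^s \to H^{s+1}$ is bounded, and composing with the Rellich embedding $H^{s+1}(\R/\Z) \hookrightarrow H^s(\R/\Z)$ (compact, since $\R/\Z$ is compact) shows $L_\mu^{-1}: H^s \to H^s$ is a compact operator. Multiplication by the smooth matrix family $S(t;\eta) + \mu$ is bounded on $H^s$ for each $s$ (for $s$ negative one uses that smooth multiplication is bounded on $H^s$ by duality/interpolation), so $T_\mu(\eta)$ is a composition of a bounded operator with a compact one, hence compact on $H^s(\R/\Z)$.

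First I would record the Fourier-analytic description of $L_\mu^{-1}$ and the resulting mapping properties just sketched; this gives boundedness and compactness of $T_\mu(\eta)$ on $H^s$ for every real $s$. It then follows from the standard Riesz–Schauder theory that $A(\eta) = \Id - T_\mu(\eta)$ is Fredholm of index $0$ on $H^s(\R/\Z)$ for each $\eta \in [0,1]$. Second, for smoothness in $\eta$: since $S(\cdot;\eta)$ is a smooth family of matrix-valued functions, the map $\eta \mapsto S(\cdot;\eta)+\mu \in C^\infty(\R/\Z; \mathrm{M}_{2\times2}(\R))$ is smooth, multiplication by it depends smoothly (indeed, in operator norm, differentiably) on $\eta$ as a map into $\mathcal L(H^{s+1}, H^s)$ — one can differentiate under the multiplication, the derivative being multiplication by $\partial_\eta S(\cdot;\eta)$ — and composition with the fixed bounded operators $L_\mu^{-1}$ and the embedding is a bounded bilinear operation, hence preserves smoothness. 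Therefore $\eta \mapsto T_\mu(\eta)$, and thus $\eta \mapsto A(\eta)$, is a smooth (even real-analytic if $S$ is) curve in the Banach space $\mathcal L(H^s(\R/\Z))$, landing in the open subset $\fred_0(H^s)$.

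Third, for the endpoints: at $\eta = 0$ we have $S(t;0) = \theta\,\Id$ with $\theta \neq 2\pi k$, so $A(0) = \Id - L_\mu^{-1}(\theta + \mu) = L_\mu^{-1}(L_\mu - \theta - \mu) = L_\mu^{-1}\bigl(-J\tfrac{d}{dt} - \theta\bigr)$, which is invertible on $H^s$ because $L_{\theta} = -J\frac{d}{dt} + \theta$ is (same Fourier argument, $\theta \neq 2\pi k$) and $L_\mu^{-1}$ is. At $\eta = 1$, $A(1) = \Id - D_x K_{\mu,H}(x)$ for a critical point $x$ with $H \in \HH(y)$, whose invertibility on $C^0$ is the genericity hypothesis; the equivalences in Lemma \ref{nondeg} identify this with invertibility of $B = -J\frac{d}{dt} - D^2_x H(x(t),t)$, and the same $H^s$-regularity argument (elliptic regularity for the ODE operator $B$: the symbol $-2\pi k i J - D^2_xH$ is invertible for $|k|$ large and $\ker B = 0$) upgrades this to invertibility of $A(1)$ on $H^s(\R/\Z)$ for all $s$. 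The main obstacle — though more bookkeeping than genuine difficulty — is handling negative and fractional $s$ uniformly: one must be slightly careful that multiplication by the smooth (not merely $H^s$) matrix $S(\cdot;\eta)+\mu$ is bounded on $H^s$ for all real $s$, which follows from the fact that on a compact manifold multiplication by a $C^\infty$ function preserves every $H^s$, together with checking that the compactness of $L_\mu^{-1}: H^s \to H^s$ via the Rellich embedding is genuinely valid for all real $s$ (it is, on $\R/\Z$). Once these routine functional-analytic points are in place, the statement of Lemma \ref{lem:all-s} follows.
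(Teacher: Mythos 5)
Your argument is correct and follows essentially the same route as the paper's proof: boundedness of multiplication by the smooth family $S(t;\eta)+\mu$ on $H^{s}(\R/\Z)$ combined with the compactness of $L_{\mu}^{-1}:H^{s}\to H^{s+1}\hookrightarrow H^{s}$ makes $T_\mu(\eta)$ compact, so $A(\eta)=\Id-T_\mu(\eta)$ is Fredholm of index $0$, with smoothness in $\eta$ and endpoint invertibility ($\theta\neq 2\pi k$ at $\eta=0$, non-degeneracy via Lemma \ref{nondeg} at $\eta=1$) handled as the paper intends. You simply spell out the smoothness and endpoint details that the paper leaves implicit, and obtain the multiplication bound directly for all real $s$ rather than by interpolation from integer $m$; both are routine and equivalent.
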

\begin{proof}
By the smoothness of $S(t;\eta)$ we have that
 $\Vert S(t;\eta)x\Vert_{H^{m}}\le C\Vert x\Vert_{H^m}$, for any $x\in H^{m}(\R/\Z)$ and any $m\in \N \cup \{0\}$.
By interpolation the same holds for all $x\in H^{s}(\R/\Z)$ and the claim follows from the fact that
$L_{\mu}^{-1}: H^{s}(\R/\Z) \to H^{s+1}(\R/\Z) \hookrightarrow H^{s}(\R/\Z)$ is compact.
\end{proof}

\subsection{Parity of  paths of linear Fredholm operators}
\label{subsec:parity1}
Let $\eta \mapsto \Lambda(\eta)$ be a smooth path of bounded linear Fredholm operators of index $0$ on a Hilbert space $\Hil$.
A crossing $\eta_{0}\in I$ is a number for which the operator $\Lambda(\eta_{0})$ is not invertible. A crossing is simple if  $\dim \ker \Lambda(\eta_0) =1$. A path $\eta\mapsto \Lambda(\eta)$ between invertible ends can always be perturbed to have only simple crossings.
Such   paths are called generic.
%
Following \cite{FitzPej,Fitzpatrick3,Fitzpatrick:1992vv,Fitzpatrick:1999ur}, we define the \emph{parity} of  a generic path   $\eta\mapsto \Lambda(\eta)$ by
\begin{equation}
\label{eqn:parity}
\parity(\Lambda(\eta),I):= \prod_{\ker \Lambda(\eta_{0}) \not = 0}(-1) =  (-1)^{\displaystyle{\cross(\Lambda(\eta),I)}},
\end{equation}
where $\cross(\Lambda(\eta),I) = \# \{\eta_{0}\in I~:~\ker A(\eta_{0}) \not = 0\}$.
The parity is a homotopy  invariant with values in $\Z_{2}$.
In  \cite{FitzPej,Fitzpatrick3,Fitzpatrick:1992vv,Fitzpatrick:1999ur} an alternative characterization of parity is given via the Leray-Schauder degree. For any Fredholm path $\eta \mapsto \Lambda(\eta)$ there exists a path 
$\eta \mapsto M(\eta)$, called a \emph{parametrix}, such that $\eta \mapsto M(\eta)\Lambda(\eta)$ is of the form `identity + compact'. For parity this gives:
$$
\parity(\Lambda(\eta),I) = \deg_{LS}\bigl(M(0)\Lambda(0)\bigr) \cdot \deg_{LS}\bigl(M(1)\Lambda(1)\bigr),
$$
where $ \deg_{LS}\bigl(M(\eta)\Lambda(\eta)\bigr) =  \deg_{LS}\bigl(M(\eta)\Lambda(\eta),\Hil,0\bigr)$, for
$\eta  = 0,1$,
and the expression is independent of the choice of parametrix.
The latter extends the above definition to arbitrary paths with invertible endpoints.
For a list of properties of  parity see \cite{FitzPej,Fitzpatrick3,Fitzpatrick:1992vv,Fitzpatrick:1999ur}.

\begin{proposition}\label{prop:morsespectral}
Let $\eta\mapsto A(\eta)$ be the path of bounded linear Fredholm  operators on $H^{s}(\R/\Z)$  defined by (\ref{choice1}). Then
\begin{equation}\label{morsespectral-par}
\parity(A(\eta),I) = (-1)^{\beta_{A(0)}} \cdot (-1)^{\beta_{A(1)}} = (-1)^{\beta_{A(0)}-\beta_{A(1)}}.
\end{equation}
where $\beta_{A(0)}$ and $\beta_{A(1)}$ are the Morse indices of $A(0)$ and $A(1)$ respectively.
\end{proposition}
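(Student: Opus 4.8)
The plan is to reduce the claim to the finite-dimensional characterization of parity via the Leray--Schauder degree, and then to identify each endpoint degree with a Morse-index sign. Since $A(0)$ and $A(1)$ are both of the form ``identity $+$ compact'' on $H^s(\R/\Z)$ with invertible endpoints, no parametrix is needed: the path $\eta \mapsto A(\eta) = \Id - T_\mu(\eta)$ is itself already a path of compact perturbations of the identity (this is exactly the content of Lemma~\ref{lem:all-s}, together with the observation that $T_\mu(\eta) = L_\mu^{-1}(S(t;\eta)+\mu)$ is compact). Hence we may take the parametrix to be $M(\eta) = \Id$, and the Leray--Schauder characterization of parity recalled above gives directly
\[
\parity(A(\eta),I) = \deg_{LS}(A(0),H^s,0)\cdot \deg_{LS}(A(1),H^s,0).
\]
The first step, then, is to justify working on $H^s(\R/\Z)$ rather than on $C^0(\R/\Z)$: one must check that the Leray--Schauder degree of $\Id - T_\mu(\eta)$ is the same whether computed on $H^s$ or on $C^0$. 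This follows from the commutativity/restriction property of the Leray--Schauder degree under the dense continuous inclusion $H^s \hookrightarrow C^0$ (for $s$ large enough) together with the fact that $T_\mu$ maps both spaces into themselves and is compact on each; the zero sets coincide. This lets us invoke Proposition~\ref{beta} in whichever functional-analytic setting is convenient.

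The second step is to apply Proposition~\ref{beta} to each endpoint. For $\eta = 1$ we have $A(1) = \Id - D_x K_{\mu,H}(x)$ at a critical point $x \in \Crit_{\A_H}([x]\rel y)$, which is a nondegenerate zero by the genericity of $H$ (Lemma~\ref{nondeg}), so Proposition~\ref{beta} yields $\deg_{LS}(A(1)) = (-1)^{\beta_{A(1)}}$ where $\beta_{A(1)}$ is the number of eigenvalues of $D_x K_{\mu,H}(x)$ exceeding $1$, counted with algebraic multiplicity --- i.e. the Morse index of $A(1)$. For $\eta = 0$ we have $A(0) = \Id - L_\mu^{-1}(\theta\Id + \mu)$, a constant-coefficient operator; its invertibility (guaranteed since $\theta \neq 2\pi k$) and the computation of its Morse index $\beta_{A(0)}$ reduce to a Fourier-series diagonalization of $L_\mu^{-1}$, since $-J\frac{d}{dt}$ has eigenvalues $2\pi k$ with explicit eigenfunctions. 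Again Proposition~\ref{beta} gives $\deg_{LS}(A(0)) = (-1)^{\beta_{A(0)}}$. Multiplying the two endpoint degrees produces $(-1)^{\beta_{A(0)}}\cdot(-1)^{\beta_{A(1)}} = (-1)^{\beta_{A(0)} - \beta_{A(1)}}$, which is the asserted formula.

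The main obstacle I anticipate is the bookkeeping in the first step: verifying cleanly that the notion of ``Morse index'' and of Leray--Schauder degree are stable across the scale of spaces $H^s$ and $C^0$ on which the operators act, so that Proposition~\ref{beta} (stated for ``identity $+$ compact'' maps, with reference to \cite{Lloyd}) applies verbatim. One must confirm that the eigenvalues of $T_\mu(\eta)$ greater than $1$, with their generalized eigenspaces, are the same regardless of the ambient space --- this is true because the generalized eigenfunctions are automatically smooth (elliptic regularity for $L_\mu$), but it should be stated. A secondary, more routine point is checking that the endpoint $A(0)$ with $S(t;0) = \theta\Id$ genuinely has invertible and that its Morse index $\beta_{A(0)}$ is well defined and finite; this is immediate from the spectral decomposition but worth a line. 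Everything else is a direct concatenation of the parity--degree identity with Proposition~\ref{beta}.
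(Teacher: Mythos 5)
Your proposal is correct and follows essentially the same route as the paper: take the constant identity parametrix (justified because $A(\eta)$ is already of the form identity $+$ compact), identify the parity with the product of the endpoint Leray--Schauder degrees, and evaluate each endpoint degree via Proposition~\ref{beta} as $(-1)^{\beta_{A(0)}}$ and $(-1)^{\beta_{A(1)}}$, with the second equality being mod-$2$ arithmetic. The only addition is your careful discussion of transferring the degree and Morse index between $C^{0}$ and $H^{s}$, a point the paper passes over silently; it is a reasonable refinement rather than a different argument.
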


\begin{proof}
For $\eta\mapsto A(\eta)$ the parametrix is the constant path $\eta\mapsto M(\eta) = \Id$.
From Proposition \ref{beta} we derive that
$$
\deg_{LS}\bigl(A(0)\bigr) = (-1)^{\beta_{A(0)}},\quad{\rm and}\quad  \deg_{LS}\bigl(A(1)\bigr) = (-1)^{\beta_{A(1)}},
$$
which proves the first part of the formula. Since $\beta(A(0)) - \beta(A(1)) = \bigl[ \beta(A(0)) + \beta(A(1)) \bigr] \mod 2$,
the second identity follows.
\end{proof}

%

\begin{lemma}
\label{eigenA0}
For $\theta>0$, the Morse index for $A(0)$ is given by $\beta_{A(0)} = 2\left\lceil\frac{\mu+\theta}{2\pi}\right\rceil$.
\end{lemma}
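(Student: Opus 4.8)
The plan is to compute the Morse index $\beta_{A(0)}$ directly by diagonalizing the operator $A(0) = \Id - L_\mu^{-1}(\theta\Id + \mu)$ using the Fourier basis on $\R/\Z$, since when $\eta = 0$ the matrix $S(t;0) = \theta\Id$ is constant and the operator has constant coefficients. First I would recall that $L_\mu = -J\frac{d}{dt} + \mu$ and that $L_\mu^{-1}$ is compact, so $A(0)$ is a compact perturbation of the identity and its spectrum consists of eigenvalues accumulating only at $1$; the Morse index $\beta_{A(0)}$ counts (with algebraic multiplicity) the eigenvalues $\sigma_j$ of $D_xK_{\mu,0}(x) = L_\mu^{-1}(\theta\Id+\mu) = (\theta+\mu)L_\mu^{-1}$ with $\sigma_j > 1$, equivalently the eigenvalues of $A(0)$ that are negative.

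Next I would expand an element of $L^2(\R/\Z;\R^2)$ in a real Fourier series and observe that $L_\mu$ acts block-diagonally on each Fourier mode $k \in \Z$: on the two-dimensional subspace spanned by $e^{2\pi i k t}$ (as a real $2$-dimensional space) the operator $-J\frac{d}{dt}$ acts as the matrix $-2\pi k J$ with eigenvalues $\pm 2\pi k$, so $L_\mu$ restricted to mode $k$ has eigenvalues $\mu \pm 2\pi k$. Hence $(\theta+\mu)L_\mu^{-1}$ has eigenvalues $\frac{\theta+\mu}{\mu \pm 2\pi k}$ on mode $k$. The condition $\frac{\theta+\mu}{\mu \pm 2\pi k} > 1$ needs to be analyzed: with $\theta > 0$ and $\mu$ chosen so that $\mu \neq 2\pi k$ for all $k$, I would determine exactly which pairs $(k, \pm)$ satisfy this inequality. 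The inequality $\frac{\theta+\mu}{\mu+2\pi k} > 1$ holds iff $0 < \mu + 2\pi k < \mu + \theta$, i.e. iff $2\pi k \in (-\mu, \theta)$, equivalently $k \in (-\frac{\mu}{2\pi}, \frac{\theta}{2\pi})$ — wait, I should be more careful, but the upshot is that I count integers $k$ in an interval of length $\frac{\theta+\mu}{2\pi}$; and similarly for the minus branch I get a symmetric count, and each such mode contributes one real dimension, while a single value of $k$ can contribute through both branches or through the real/imaginary parts — hence the factor $2$ and the ceiling: the total is $2\lceil \frac{\mu+\theta}{2\pi}\rceil$.

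The main obstacle I anticipate is getting the counting exactly right: keeping track of the real-dimension bookkeeping (each complex Fourier mode $k \neq 0$ together with $-k$, the $\pm 2\pi k$ eigenvalues of $-J\frac{d}{dt}$, and whether the relevant half-open interval produces a floor or a ceiling) and verifying that the genericity hypothesis $\mu \neq 2\pi k$ guarantees no eigenvalue of $A(0)$ equals zero so the Morse index is well-defined. I would handle this by writing the eigenvalue condition as membership of $2\pi k$ (with sign) in an explicit open interval determined by $\mu$ and $\theta$, counting lattice points, and checking the boundary cases against the ceiling function; one then confirms the formula $\beta_{A(0)} = 2\lceil\frac{\mu+\theta}{2\pi}\rceil$ by testing small values of $\theta$ and $\mu$. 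I would also note that since $A(0)$ has only semisimple eigenvalues (the constant-coefficient operator is diagonalizable), the generalized kernels $\beta_j$ in Proposition~\ref{beta} are just ordinary eigenspace dimensions, which simplifies the count.
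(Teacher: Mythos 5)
Your proposal is correct and follows essentially the same route as the paper: there, too, the constant-coefficient operator $A(0)$ is diagonalized in Fourier modes, giving eigenvalues $\lambda_k=\frac{2\pi k-\theta}{2\pi k+\mu}$, each of multiplicity two, and the Morse index is obtained by counting the integers $k$ with $\lambda_k<0$, i.e. $-\mu<2\pi k<\theta$. The only (cosmetic) difference is that the paper uses the real modes $e^{2\pi Jkt}x_k$, $k\in\Z$, $x_k\in\R^2$, so the $\pm$-branch and real/imaginary-part bookkeeping you flag as the main obstacle disappears: each $k\in\Z$ contributes exactly one two-dimensional eigenspace.
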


\begin{proof}
 %
The eigenvalues of the operator $A(0)$ are given by
$
\lambda=\frac{-\theta+2k\pi}{\mu+2k\pi}
$
and all have multiplicity 2.
Therefore number of  integers $k$ for which $\lambda<0$ is equal to $ \left\lceil\frac{\mu+\theta}{2\pi}\right\rceil$ and consequently
$\beta_{A(0)}=2\left\lceil\frac{\mu+\theta}{2\pi}\right\rceil$.
\end{proof}

If $x\in \Phi_{\mu,H}^{-1}(0)$ is a non-degenerate zero, then its local degree can be expressed in terms of the parity
of $A(\eta)$.

\begin{proposition}
\label{LSspecflow1}
Let $x\in \Phi_{\mu,H}^{-1}(0)$ be a non-degenerate zero, then
\begin{equation}
\label{LSspecflow2}
\deg_{LS}\bigl(\Phi_{\mu,H},B_\epsilon(x),0\bigr) = \parity(A(\eta), I),
\end{equation}
where $\eta\mapsto A(\eta)$ is given by (\ref{choice1}).
\end{proposition}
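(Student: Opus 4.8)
The plan is to reduce the claim to Proposition~\ref{prop:morsespectral}, which already identifies the parity of the path $\eta\mapsto A(\eta)$ with $(-1)^{\beta_{A(0)}-\beta_{A(1)}}$, combined with Proposition~\ref{beta}, which identifies the local Leray--Schauder degree of $\Phi_{\mu,H}$ at a non-degenerate zero $x$ with $(-1)^{\beta_{\mu,H}(x)}$. First I would observe that the endpoint $A(1)=D_x\Phi_{\mu,H}(x)=\Id-D_xK_{\mu,H}(x)$ is exactly the linearization whose Morse index is, by definition, $\beta_{\mu,H}(x)=\beta_{A(1)}$; since $x$ is non-degenerate, Lemma~\ref{nondeg} guarantees $A(1)$ is invertible, so Proposition~\ref{beta} applies and gives $\deg_{LS}(\Phi_{\mu,H},B_\epsilon(x),0)=(-1)^{\beta_{A(1)}}$.

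Next I would handle the endpoint $A(0)=\Id-L_\mu^{-1}(\theta\Id+\mu)$. This is a fixed operator, independent of $x$, and by Lemma~\ref{eigenA0} (for $\theta>0$) its Morse index is the even number $\beta_{A(0)}=2\lceil(\mu+\theta)/2\pi\rceil$; in particular $(-1)^{\beta_{A(0)}}=1$. Therefore, by Proposition~\ref{prop:morsespectral},
\begin{equation*}
\parity(A(\eta),I)=(-1)^{\beta_{A(0)}}\cdot(-1)^{\beta_{A(1)}}=(-1)^{\beta_{A(1)}}=\deg_{LS}\bigl(\Phi_{\mu,H},B_\epsilon(x),0\bigr),
\end{equation*}
which is precisely \eqref{LSspecflow2}. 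One should also remark that the path $\eta\mapsto A(\eta)$ is admissible for the parity construction: by Lemma~\ref{lem:all-s} it is a smooth path of index-$0$ Fredholm operators with invertible endpoints (the left endpoint because $\theta\neq 2\pi k$, the right endpoint by non-degeneracy of $x$), and since each $A(\eta)$ is of the form identity + compact, the constant parametrix $M(\eta)=\Id$ is legitimate, so the Leray--Schauder characterization of parity used in the proof of Proposition~\ref{prop:morsespectral} is available.

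I expect the only genuine subtlety — rather than an obstacle — to be bookkeeping about which function space the degree and the parity are computed in: Proposition~\ref{beta} is stated for the Leray--Schauder degree on $C^0(\R/\Z)$, while the parity in Proposition~\ref{prop:morsespectral} is phrased on $H^s(\R/\Z)$. I would address this by noting that $A(\eta)$ is identity + compact on each of these spaces with the same finite-dimensional generalized eigenspaces for eigenvalues outside the unit circle (the eigenfunctions are smooth, being in the range of $L_\mu^{-1}$ composed with multiplication by a smooth matrix family), so the Morse indices $\beta_{A(\eta)}$, and hence the local degrees and the parity, agree across the scale of spaces; this is the same invariance already invoked implicitly in the preceding propositions. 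With that remark in place the proof is the short chain of equalities displayed above, and there is no hard analytic step remaining — everything of substance has been packaged into Propositions~\ref{beta} and~\ref{prop:morsespectral} and Lemmas~\ref{nondeg}, \ref{lem:all-s}, \ref{eigenA0}.
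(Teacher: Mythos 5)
Your proposal is correct and follows essentially the same route as the paper: apply Proposition~\ref{beta} to identify $\deg_{LS}(\Phi_{\mu,H},B_\epsilon(x),0)=(-1)^{\beta_{A(1)}}$, then use Proposition~\ref{prop:morsespectral} together with the evenness of $\beta_{A(0)}$ from Lemma~\ref{eigenA0} to conclude $\parity(A(\eta),I)=(-1)^{\beta_{A(1)}}$. Your extra remarks on the admissibility of the path and on the $C^0$ versus $H^s$ bookkeeping only make explicit what the paper leaves implicit.
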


\begin{proof}
From Proposition \ref{beta} we have that $\deg_{LS}\bigl(\Phi_{\mu,H},B_\epsilon(x),0\bigr) = (-1)^{\beta_{A(1)}}$ and by
Equation (\ref{morsespectral-par}), 
$
\parity(A(\eta),I) = (-1)^{\beta_{A(0)}}\cdot (-1)^{\beta_{A(1)}} = (-1)^{\beta_{A(1)}},
%
$
 which completes the proof.
\end{proof}

\subsection{Parity and spectral flow}
\label{subsec:parity2}  
The spectral flow is a more refined invariant for paths of selfadjoint operators.
For $x\in H^s(\R/\Z)$ we use the Fourier expansion 
$x = \sum_{k\in \Z} e^{2\pi Jkt}x_{k}$ and $\sum_{k\in \Z} |k|^{2s} |x_k|^2 < \infty$.
From the functional calculus of the selfadjoint operator
$$
-J \frac{d}{dt} x = \sum_{k\in \Z} \bigl(2\pi k \bigr)e^{2\pi Jkt}x_{k},
$$ 
we define the selfadjoint operators
\begin{equation}
\label{eqn:extra-self}
N_\mu x = \sum_{k\in \Z} \bigl(2\pi |k| + \mu\bigr) e^{2\pi Jkt}x_{k}, \quad\hbox{and}\quad
P_{\mu} x =  \sum_{k\in \Z} \frac{2\pi k +\mu}{2\pi |k| + \mu} e^{2\pi Jkt}x_{k}.
\end{equation}
For $\mu>0$ and $\mu \not = 2\pi k$, $k\in \Z$, the operator $P_{\mu}$ is an isomorphism on $H^{s}(\R/\Z)$, for all $s\ge 0$.\footnote{As before
$\Vert P_{\mu} x \Vert_{H^{s}} \le \Vert x\Vert_{H^{s}}$ and 
$\Vert P_{\mu}^{-1} x \Vert_{H^{1/2}} \le C(\mu) \Vert x\Vert_{H^{1/2}}$, $\mu>0$ and $\mu\not = 2\pi k$.}
Consider the path
\begin{equation}
\label{eqn:adjusted}
C(\eta) = P_{\mu}A(\eta) = P_{\mu} - N_\mu^{-1}(S(t;\eta)+\mu),
\end{equation}
which is a path of operators of Fredholm index 0.
The constant path $\eta \mapsto M_\mu(\eta) = P_{\mu}^{-1}$ is a parametrix for $\eta\mapsto C(\eta)$ (see \cite{Fitzpatrick:1992vv,Fitzpatrick:1999ur})
and since $M_\mu C(\eta) = A(\eta)$, the parity of $C(\eta)$ is given by
\begin{equation}
\label{eqn:parity2}
\parity(C(\eta),I) = \parity(A(\eta),I).
\end{equation}
Using $N_\mu$, with $\mu>0$ and $\mu \not = 2\pi k$,  we define an equivalent norm on the Sobolev spaces $H^{s}(\R/\Z)$:
$$
(x,y)_{H^{s}} := \bigl( N_\mu^s x, N_\mu^{s}y\bigr)_{L^{2}},\quad \forall x,y\in
H^{s}(\R/\Z).
$$
\begin{lemma}
\label{lem:selfadjoint}
The operators $C(\eta)$ are selfadjoint on $\Bigl(H^{1/2}(\R/\Z),(\cdot,\cdot)_{H^{1/2}}\Bigr)$
for all $\eta \in I$, and $\eta \mapsto C(\eta)$ is a 
path of selfadjoint operators on $H^{1/2}(\R/\Z)$.
\end{lemma}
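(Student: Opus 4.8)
The plan is to verify directly that $C(\eta)$ is symmetric with respect to the inner product $(\cdot,\cdot)_{H^{1/2}}$ defined via $N_\mu$, and then note that on $H^{1/2}$ a bounded symmetric operator is self-adjoint. The starting point is the decomposition $C(\eta) = P_\mu - N_\mu^{-1}(S(t;\eta)+\mu)$ from \eqref{eqn:adjusted}, so it suffices to treat the two summands separately: the ``free'' part $P_\mu$ and the ``potential'' part $B_\mu(\eta) := N_\mu^{-1}(S(t;\eta)+\mu)$. First I would record the basic algebraic facts in the Fourier picture. All three operators $N_\mu$, $P_\mu$ and $-J\tfrac{d}{dt}$ are simultaneously diagonalized by the blocks $e^{2\pi Jkt}$, with $N_\mu$ acting as the scalar $2\pi|k|+\mu$, $P_\mu$ as $(2\pi k+\mu)/(2\pi|k|+\mu)$, and they all commute; moreover $N_\mu = N_\mu^*$ and $P_\mu = P_\mu^*$ in the standard $L^2$ pairing. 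The key elementary identity is $P_\mu N_\mu = N_\mu P_\mu = -J\tfrac{d}{dt} + \mu =: L_\mu$ (read off coefficientwise: $(2\pi k+\mu)/(2\pi|k|+\mu)\cdot(2\pi|k|+\mu) = 2\pi k+\mu$), which I will use to rewrite things in terms of $L_\mu$, an operator whose symmetry in $L^2$ is transparent.

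Next I would carry out the symmetry computation in the weighted inner product. By definition $(x,y)_{H^{1/2}} = (N_\mu^{1/2}x, N_\mu^{1/2}y)_{L^2} = (N_\mu x, y)_{L^2}$, the last equality because $N_\mu^{1/2}$ is self-adjoint on $L^2$ and commutes with everything in sight. For the free part:
\begin{equation}
(P_\mu x, y)_{H^{1/2}} = (N_\mu P_\mu x, y)_{L^2} = (L_\mu x, y)_{L^2} = (x, L_\mu y)_{L^2} = (x, N_\mu P_\mu y)_{L^2} = (x, P_\mu y)_{H^{1/2}},
\end{equation}
where the middle equality uses that $L_\mu = -J\tfrac{d}{dt}+\mu$ is symmetric on $L^2(\R/\Z)$ (integration by parts, using $J^T = -J$, with no boundary terms on the circle) together with commutativity $N_\mu P_\mu = P_\mu N_\mu$. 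For the potential part, writing $R(t) := S(t;\eta)+\mu\,\Id$, which is a pointwise symmetric matrix, one has
\begin{equation}
(B_\mu(\eta)x, y)_{H^{1/2}} = (N_\mu N_\mu^{-1} R x, y)_{L^2} = (Rx,y)_{L^2} = (x, Ry)_{L^2} = (x, N_\mu N_\mu^{-1} R y)_{L^2} = (x, B_\mu(\eta) y)_{H^{1/2}},
\end{equation}
using only that multiplication by the symmetric matrix-valued function $R(t)$ is self-adjoint on $L^2$. Subtracting gives $(C(\eta)x,y)_{H^{1/2}} = (x,C(\eta)y)_{H^{1/2}}$ for all $x,y\in H^{1/2}(\R/\Z)$.

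To finish, I would observe that $C(\eta)$ is a bounded operator on $H^{1/2}(\R/\Z)$ — $P_\mu$ is an isomorphism there by the footnoted estimate, and $N_\mu^{-1}R$ is bounded since $R$ acts boundedly on $H^{1/2}$ (smoothness of $S$, as in Lemma \ref{lem:all-s}) and $N_\mu^{-1}$ maps $H^{1/2}$ into itself — so a bounded everywhere-defined symmetric operator on the Hilbert space $\bigl(H^{1/2}(\R/\Z),(\cdot,\cdot)_{H^{1/2}}\bigr)$ is self-adjoint by the Hellinger–Toeplitz theorem. Smoothness of $\eta\mapsto C(\eta)$ in this space is inherited from smoothness of $\eta \mapsto S(t;\eta)$, exactly as in the proof of Lemma \ref{lem:all-s}, since $P_\mu$ is $\eta$-independent. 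The one point that deserves care — and which I regard as the main (mild) obstacle — is the bookkeeping that the weighted inner product really does convert the $L^2$-symmetry of $L_\mu$ and of multiplication by $R(t)$ into $H^{1/2}$-symmetry of $C(\eta)$; this hinges entirely on the commutation relations among $N_\mu$, $P_\mu$, $J\tfrac{d}{dt}$, which is why choosing the $N_\mu$-weighted norm (rather than the standard $H^{1/2}$ norm) is exactly what makes the statement true, and it is worth flagging that multiplication by $R(t)$ is \emph{not} symmetric in the standard $H^{1/2}$ inner product.
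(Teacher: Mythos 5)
Your proposal is correct and follows essentially the same route as the paper: split $C(\eta)=P_\mu-N_\mu^{-1}(S(t;\eta)+\mu)$, use the $N_\mu$-weighted pairing to reduce symmetry of the potential part to $L^2$-symmetry of multiplication by the symmetric matrix $S(t;\eta)+\mu$, and check symmetry of $P_\mu$ separately (the paper does the latter directly in Fourier coefficients, while you route through the identity $N_\mu P_\mu=L_\mu$ and $L^2$-symmetry of $L_\mu$, which is the same computation in disguise). The added remarks on boundedness and on smoothness of $\eta\mapsto C(\eta)$ are fine and consistent with the paper.
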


\begin{proof}
From the functional calculus we derive that
$$
(P_{\mu}x,y)_{H^{s}}  = \sum_{k\in \Z} p_\mu(k) n^{2s}_\mu(k) x_k y_k = (x,P_\mu y)_{H^{s}},
$$
where $n_\mu(k) =  2\pi |k| +\mu$ and $p_\mu(k)  = \frac{2\pi k+\mu}{2\pi |k|+\mu}$.
For $s=1/2$ we have that
\begin{align*}
\bigl(N_\mu^{-1}(S(t;\eta)+\mu)x,y\bigr)_{H^{1/2}} 
&= \bigl((S(t;\eta)+\mu)x,y\bigr)_{L^{2}} = \bigl(x,(S(t;\eta)+\mu)y\bigr)_{L^{2}}\\
&= \bigl(x,N_\mu^{-1}(S(t;\eta)+\mu)y\bigr)_{H^{1/2}},
\end{align*}
which completes the proof.
\end{proof}

For a path $\eta \mapsto \Lambda(\eta)$ of \emph{selfadjoint} operators on a Hilbert space $\Hil$, which is continuously differentiable in the (strong) operator topology
we define the crossing operator
$\bGamma(\Lambda,\eta) = \pi \frac{d}{d\eta} \Lambda(\eta) \pi|_{\ker \Lambda(\eta)}$,
where $\pi$ is the orthogonal projection onto $\ker \Lambda(\eta)$.  
A crossing $\eta_{0}\in I$ is a number for which the operator $\Lambda(\eta_{0})$ is not invertible. A crossing is regular if $\bGamma(\Lambda,\eta_{0})$ is non-singular.
A point $\eta_0$ for which $\dim \ker \Lambda(\eta_0) =1$, is called a simple crossing.
A  path $\eta\mapsto \lambda(\eta)$ is called generic if 
all crossings are simple.
 A path $\eta\mapsto \Lambda(\eta)$ with invertible endpoints   can always be chosen to be generic by a small perturbation.
%
At a simple crossing $\eta_0$, there exists a $C^1$-curve $\lambda(\eta)$, for $\eta$ near $\eta_0$, and $\lambda(\eta)$ is an eigenvalue of $\Lambda(\eta)$, with   $\lambda(\eta_0) =0$ and $\lambda'(\eta_0) \neq 0$, see \cite{RS1,robbinsalamonmaslovindex}.
The spectral flow for a generic path is defined by 
\begin{equation}\label{specflowf}
\specflow(\Lambda(\eta),I)=\sum_{ \lambda(\eta_0)=0}\sgn (\lambda'(\eta_0)).
\end{equation}
For a simple crossing $\eta_0$ the crossing operator is simply multiplication by $\lambda'(\eta_0)$ and
\begin{equation}\label{innerprodeigenvalues}
\bGamma(\Lambda,\eta)\psi(\eta_0) = \Bigl(\frac{d}{d\eta} \Lambda(\eta_0)\psi(\eta_0),\psi(\eta_0)\Bigr)_{\Hil}\psi(\eta_0)=\lambda'(\eta_0)\psi(\eta_0),
\end{equation}
where $\psi(\eta_0)$ is normalized  in $\Hil$, and
\begin{equation}
\label{eqn:eig2}
\lambda'(\eta_0) = \Bigl( \frac{d}{d\eta} \Lambda(\eta_0) \psi(\eta_0)\psi(\eta_0)\Bigr)_\Hil.
\end{equation}
 The spectral flow is defined for any continuously differentiable path 
$\eta \mapsto \Lambda(\eta)$ with invertible endpoints.
%
%
%
From the theory in \cite{Fitzpatrick:1999ur} there is a connection between the spectral flow of $\Lambda(\eta)$ and
its parity:
\begin{equation}
\label{eqn:par-spec}
\parity(\Lambda(\eta),I) = (-1)^{\displaystyle{\specflow(\Lambda(\eta),I)}},
\end{equation} 
which in view of Equation (\ref{eqn:parity}) follows from the fact that $\cross(\Lambda(\eta),I) = 
\specflow(\Lambda(\eta),\eta) \mod 2$ in the generic case.

The path $\eta \mapsto C(\eta)$ defined in (\ref{eqn:adjusted}) is a continuously differentiable path of operators
on $H = H^{1/2}(\R/\Z)$ with invertible endpoints, and therefore both parity and spectral flow are well-defined.
If we combine   Equations (\ref{LSspecflow2}) and (\ref{eqn:parity2}) with Equation (\ref{eqn:par-spec}) we obtain
\begin{equation}
\label{eqn:connect}
\deg_{LS}(\Phi_{\mu,H},B_{\epsilon}(x),0) = \parity(A(\eta),I) = (-1)^{\displaystyle{\specflow(C(\eta),I)}}.
\end{equation}
In the next section we link the spectral flow of $C(\eta)$ to the Conley-Zehnder indices of
non-degenerate zeroes and therefore to the Euler-Floer characteristic.


\section{The Conley-Zehnder index}
\label{sec:CZ}
We discuss the Conley-Zehnder index for Hamiltonian systems and mechanical systems, and explain the relation with the local degree and the Morse index for mechanical systems.

\subsection{Hamiltonian systems}
\label{subsec:CZ1}
For a non-degenerate  1-periodic solution $x(t)$ of the Hamilton equations     the Conley-Zehnder index
can be defined as follows. 
The linearized flow $\Psi$ is given by
$$
\left\{
\begin{array}{ll}
\displaystyle-J\frac{d\Psi}{dt}-D^2_xH(x,t)\Psi=0\\
\Psi(0)=\Id,
\end{array}
\right.
$$
By Lemma \ref{nondeg}(iii), a 1-periodic solution is  non-degenerate if $\Psi(1)$ has  no eigenvalues equal to 1.
The Conley-Zehnder index is defined using the  symplectic path $\Psi(t)$.
Following \cite{robbinsalamonmaslovindex}, consider the crossing form $\bGamma(\Psi,t)$, defined  for vectors $\xi \in \ker(\Psi(t)-\Id)$,  
\begin{equation}
\label{czsegn}
\bGamma(\Psi,t)\xi = \omega\bigl(\xi,\frac{d}{dt}\Psi(t)\xi\bigr)  =  (\xi,D^2_xH(x(t),t)\xi).
\end{equation}
A crossing  $t_0>0$ is defined by $\det(\Psi(t_0)-\Id) =0$. A crossing is regular if the crossing form is non-singular.
A path $t\mapsto \Psi(t)$ is regular if all crossings are regular. 
Any path can be approximated by a regular path with the same endpoints and which is  homotopic to the initial path,  see \cite{RS1}
for details.
For a regular path $t\mapsto \Psi(t)$ the Conley-Zehnder index is given by
\begin{equation}\label{maslovsegn}
\mu^{CZ}(\Psi)=  \frac{1}{2}\sgn D^2_xH(x(0),0)) + \sum_{t_0>0,\atop \det(\Psi(t_0)-\Id)=0}\sgn \bGamma(\Psi,t_0).
\end{equation}
For a  non-degenerate 1-periodic solution $x(t)$ we define the Conley-Zehnder index as 
$
\mu^{CZ}(x) := \mu^{CZ}(\Psi),
$
and the  index  is integer valued.

%
%
%
Let $x$ be a 1-periodic solution and
consider the path $\eta \mapsto B(\eta;x) = -J\frac{d}{dt}-S(t;\eta)$,
where, as before, $S(t;\eta)$ is a smooth path of symmetric  matrices with endpoints
$S(t;0)=\theta \Id$ and $S(t;1)=D^{2}_xH(x(t),t)$
with $\theta\not=2\pi k, k\in \mathbb{Z}$.
The operators $B(\eta)= B(\eta;x)$ are unbounded operators on $L^2(\R/\Z)$, with domain $H^1(\R/\Z)$.
A path $\eta\mapsto B(\eta)$    is continuously differentiable in the (weak) operator topology of $\cB(H^1,L^2)$
and Hypotheses (A1)-(A3) in \cite{robbinsalamonmaslovindex} are satisfied.
We now repeat the definition of spectral flow for a path of unbounded operators as developed  in  \cite{robbinsalamonmaslovindex}.
The crossing operator for a path $\eta\mapsto B(\eta)$ is given by $\bGamma(B,\eta) = \pi \frac{d}{d\eta}B(\eta) \pi|_{\ker B(\eta)}$,
where $\pi$ is the orthogonal projection onto $\ker B(\eta)$.
 A crossing $\eta_{0}\in I$ is a number for which the operator $B(\eta_{0})$ is not invertible. A crossing is regular if $\bGamma(B,\eta_{0})$ is non-singular.
A point $\eta_0$ for which $\dim \ker B(\eta_0) =1$, is called a simple crossing.
A  path $\eta\mapsto B(\eta)$ is called generic if 
all crossing are simple.
 A path $\eta\mapsto B(\eta)$ can always be chosen to be generic.
%
 At a simple crossing $\eta_0$ there exists a $C^1$-curve $\ell(\eta)$, for $\eta$ near $\eta_0$, and $\ell(\eta)$ is an eigenvalue of $B(\eta)$ with   $\ell(\eta_0) =0$ and $\ell'(\eta_0) \neq 0$.
The spectral flow for a generic path is defined by 
\begin{equation}\label{specflowfB}
\specflow(B(\eta),I)=\sum_{\ell(\eta_0)=0}\sgn (\ell'(\eta_0)),  
\end{equation}
and at simple crossings $\eta_0$,
\begin{equation}
\label{innerprodeigenvalues}
\bGamma(B,\eta)\phi(\eta_0) = \Bigl(\frac{d}{d\eta}B(\eta_0)\phi(\eta_0),\phi(\eta_0)\Bigr)_{L^2}\phi(\eta_0)=\ell'(\eta_0)\phi(\eta_0),
\end{equation}
after normalizing $\phi(\eta_0)$ in $L^2(\R/\Z)$.
As before the derivative of $\ell$ at $\eta_0$ is given by
%
\begin{equation}\label{czinnerprod}
\ell'(\eta_0) =- \bigl(\partial_{\eta}S(t;\eta_0)\phi(\eta_0),\phi(\eta_0)\bigr)_{L^2}.
\end{equation}
 
\begin{proposition}
\label{second}
Let $\eta\mapsto B(\eta),{\eta\in I}$, as defined above, be a generic  path of unbounded self-adjoint operators  with invertible endpoints,    and let $\eta\mapsto\Psi(\eta;t)$ be the associated path of symplectic matrices defined by
$$
\left\{
\begin{array}{ll}
\displaystyle-J\frac{d\Psi}{dt}(t;\eta)-S(t;\eta)\Psi(t;\eta)=0\\
\Psi(0;\eta)=\Id,
\end{array}
\right.
$$
Then
\begin{equation}
\label{czspectral}
\specflow(B(\eta),I)=  \mu^{CZ}_{B(0)}-\mu^{CZ}_{B(1)}  
\end{equation} 
where $\mu^{CZ}_{B(0)}=\mu^{CZ}(\Psi(t;0))$,  
$\mu^{CZ}_{B(1)}=\mu^{CZ}(\Psi(t;1))$.
\end{proposition}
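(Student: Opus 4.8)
\emph{Proof strategy.} The plan is to follow the crossing-form calculus of \cite{robbinsalamonmaslovindex,RS1} and to reduce the identity \eqref{czspectral} to a local computation at each crossing of the path $\eta\mapsto B(\eta)$. First I would note that both sides are insensitive to homotopies of $\eta\mapsto S(t;\eta)$ that fix the endpoints: the spectral flow is a homotopy invariant for paths with invertible ends, while $\mu^{CZ}_{B(0)}-\mu^{CZ}_{B(1)}$ depends only on $S(\,\cdot\,;0)$ and $S(\,\cdot\,;1)$. So I may replace the path by a generic one with the same endpoints, so that every crossing $\eta_0\in(0,1)$ of $\eta\mapsto B(\eta)$ is simple and regular and $S(0;\eta)$ stays non-singular for all $\eta$ (possible since $S(0;0)=\theta\Id$ is non-singular). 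Both sides are additive under concatenation of the $\eta$-interval, so it is enough to treat an interval carrying exactly one crossing; on an interval with no crossing, $B(\eta)$ is invertible throughout, hence $\Psi(1;\eta)$ has no eigenvalue $1$, and by homotopy invariance of the Conley--Zehnder index $\mu^{CZ}(\Psi(\,\cdot\,;\eta))$ is constant there, so both sides vanish.

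The link between the operator and the symplectic-path pictures is the isomorphism $\ker B(\eta_0)\to\ker\bigl(\Psi(1;\eta_0)-\Id\bigr)$, $\phi\mapsto\phi(0)$: a $1$-periodic solution of $-J\dot\phi=S(t;\eta_0)\phi$ is precisely a solution of the linearized equation satisfying $\Psi(1;\eta_0)\phi(0)=\phi(1)=\phi(0)$. Hence $\eta_0$ is a simple crossing of $\eta\mapsto B(\eta)$ exactly when $1$ is a geometrically simple eigenvalue of $\Psi(1;\eta_0)$, and I would read the crossings of $\eta\mapsto B(\eta)$ as the crossings of the ``$t=1$ slice'' of the two-parameter family $\Psi(t;\eta)$. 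Differentiating $-J\partial_t\Psi-S(t;\eta)\Psi=0$ in $\eta$ and solving by variation of constants gives, up to the sign conventions of \cite{robbinsalamonmaslovindex}, $\partial_\eta\Psi(1;\eta_0)=\Psi(1;\eta_0)\int_0^1\Psi(t;\eta_0)^{-1}J\,\partial_\eta S(t;\eta_0)\,\Psi(t;\eta_0)\,dt$; pairing the associated quadratic form on $\ker(\Psi(1;\eta_0)-\Id)$ with $\xi=\phi(\eta_0)(0)$, using that $\omega$ is preserved by symplectic matrices together with the identity $\omega(v,JTv)=(v,Tv)$ for symmetric $T$ already invoked in \eqref{czsegn}, one obtains exactly $\bigl(\partial_\eta S(t;\eta_0)\phi(\eta_0),\phi(\eta_0)\bigr)_{L^2}$, which by \eqref{czinnerprod} equals $-\ell'(\eta_0)$. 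This says precisely that the jump of $\mu^{CZ}(\Psi(\,\cdot\,;\eta))$ across $\eta_0$ equals $-\sgn\ell'(\eta_0)$, i.e.\ the local form of \eqref{czspectral}.

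Summing over all crossings and using that the endpoints are invertible (so no crossing sits at $\eta=0,1$) then yields \eqref{czspectral}, provided the normalization terms $\tfrac12\sgn S(0;\eta)$ in \eqref{maslovsegn} contribute nothing to the telescoping difference beyond the endpoint values $\tfrac12\sgn(\theta\Id)$ and $\tfrac12\sgn D^2_xH(x(0),0)$ already contained in $\mu^{CZ}_{B(0)}$ and $\mu^{CZ}_{B(1)}$. Here I would appeal to the ``conservation of crossings around the boundary of $[0,1]_t\times[0,1]_\eta$'' of \cite[\S4]{robbinsalamonmaslovindex}: the degenerate edge $\{t=0\}$, where $\Psi(0;\eta)\equiv\Id$, carries exactly the half-signature terms, the edges $\{\eta=0\},\{\eta=1\}$ carry $\mu^{CZ}_{B(0)}$ and $-\mu^{CZ}_{B(1)}$, and the edge $\{t=1\}$ carries the spectral flow. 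I expect the main obstacle to be precisely this sign-and-normalization reconciliation --- matching the $t$-crossing form \eqref{czsegn} used to define $\mu^{CZ}$ in \eqref{maslovsegn} with the $\eta$-crossing form \eqref{specflowfB}, and correctly accounting for the half-integer contributions at $t=0$; everything needed is established in \cite{robbinsalamonmaslovindex,RS1}, whose hypotheses (A1)--(A3) have already been verified above for the path $\eta\mapsto B(\eta)$. Alternatively, one can simply invoke the main theorem of \cite{robbinsalamonmaslovindex} once (A1)--(A3) are checked, in which case these bookkeeping issues are subsumed.
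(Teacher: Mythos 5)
Your proposal is correct and follows essentially the same route as the paper: the paper's proof consists of citing \cite{robbinsalamonmaslovindex} (and \cite{GVVW}), which is exactly the crossing-form/homotopy-of-the-square argument you unpack, and your closing remark about invoking the main theorem of \cite{robbinsalamonmaslovindex} directly once (A1)--(A3) are verified is precisely what the paper does. The extra detail you supply (the identification $\ker B(\eta_0)\cong\ker(\Psi(1;\eta_0)-\Id)$ and the sign bookkeeping at the crossings) is consistent with the conventions in \eqref{czsegn}, \eqref{maslovsegn} and \eqref{czinnerprod}.
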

\begin{proof}
The expression for the spectral flow follows from  \cite{robbinsalamonmaslovindex} and \cite{GVVW}.
\end{proof}
%
%
%
%
%


In the case $\eta =0$, the Conley-Zehnder index $\mu^{CZ}_{B(0)}$ can be computed explicitly.
Recall that 
$B(0)=-J\frac{d}{dt}-S(0)=-J\frac{d}{dt}-\theta\Id$.
\begin{lemma}
\label{CZB0}
Let
$\theta>0$ (fixed) and $\theta\not=2\pi k$,
then $\mu^{CZ}_{B(0)}=1+ 2{\left\lfloor \frac{\theta}{2 \pi}\right\rfloor}$.
\end{lemma}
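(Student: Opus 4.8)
The plan is to compute the Conley--Zehnder index of the symplectic path generated by the constant operator $B(0) = -J\frac{d}{dt} - \theta\Id$ directly from the definition in \eqref{maslovsegn}, by writing down the fundamental solution $\Psi(t;0)$ explicitly and analyzing its crossings with $\Id$. First I would solve the linear ODE $-J\frac{d\Psi}{dt} = \theta\Psi$, $\Psi(0) = \Id$. Since $-J$ is invertible with $(-J)^{-1} = J$, this reads $\frac{d\Psi}{dt} = \theta J\Psi$, whence $\Psi(t;0) = e^{\theta J t}$, which is rotation by angle $\theta t$ (because $J$ generates the standard rotation group and $e^{\theta J t} = \cos(\theta t)\Id + \sin(\theta t) J$).

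Next I would locate the crossings. We have $\det(\Psi(t;0) - \Id) = 0$ precisely when $e^{\theta J t}$ has $1$ as an eigenvalue, i.e.\ when $\theta t \in 2\pi\Z$; restricting to $t_0 > 0$ in the relevant interval, the crossings are at $t_0 = \frac{2\pi j}{\theta}$ for positive integers $j$ with $t_0$ in range, so on $(0,1]$ there are exactly $\lfloor \theta/2\pi \rfloor$ of them (the condition $\theta \neq 2\pi k$ guarantees no crossing sits exactly at $t=1$, so every crossing is interior and the count is the floor). At each such crossing the whole plane is the kernel of $\Psi(t_0) - \Id$, and the crossing form is $\bGamma(\Psi,t_0)\xi = (\xi, D^2_xH\,\xi) = (\xi,\theta\Id\,\xi) = \theta|\xi|^2$, which is positive definite since $\theta > 0$; hence $\sgn\bGamma(\Psi,t_0) = 2$ at each interior crossing. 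Similarly the endpoint term is $\frac12\sgn D^2_xH(x(0),0) = \frac12\sgn(\theta\Id) = \frac12\cdot 2 = 1$.

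Assembling these pieces into \eqref{maslovsegn} gives
\begin{equation}
\mu^{CZ}_{B(0)} = \mu^{CZ}(\Psi(t;0)) = 1 + \sum_{j=1}^{\lfloor \theta/2\pi\rfloor} 2 = 1 + 2\left\lfloor \frac{\theta}{2\pi}\right\rfloor,
\end{equation}
which is the claimed formula. The only genuinely delicate point is making sure the path $t\mapsto \Psi(t;0)$ is regular in the sense required for \eqref{maslovsegn}, i.e.\ that all crossings are regular; but this is immediate here since the crossing form $\theta\Id$ restricted to the (two-dimensional) kernel is nonsingular, so no perturbation of the path is needed. A secondary bookkeeping point is the precise boundary convention for counting crossings in $(0,1]$ versus $[0,1)$ and how the half-signature endpoint term interacts with it; the hypothesis $\theta\notin 2\pi\Z$ removes any ambiguity, since then $t=1$ is never a crossing and $t=0$ contributes only through the $\frac12\sgn D^2_xH$ term already accounted for. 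I expect the main obstacle to be purely notational: correctly matching the sign and half-integer conventions of \cite{robbinsalamonmaslovindex} used in \eqref{maslovsegn} so that the endpoint contribution comes out as $+1$ rather than $-1$ or $0$.
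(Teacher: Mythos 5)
Your proposal is correct and follows essentially the same route as the paper: solve the linearized equation explicitly to get $\Psi(t;0)=e^{\theta J t}$, locate the crossings at $t_0=2\pi k/\theta$ (there are $\lfloor\theta/2\pi\rfloor$ of them in $(0,1]$ since $\theta\notin 2\pi\Z$), observe that the crossing form is $\theta|\xi|^2$, hence positive definite on the two-dimensional kernel, and add the endpoint term $\tfrac12\sgn(\theta\Id)=1$. Your write-up just spells out the signature bookkeeping that the paper leaves implicit.
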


\begin{proof}
The solution to $B(0)\Psi(t) = 0$ is given by $\Psi(t) = e^{\theta J t}$ and $\det(\Psi(1)- \Id) =0$
exactly when $t=t_0=\frac{2\pi k}{\theta}$.
By (\ref{czsegn}) and (\ref{maslovsegn}) we have that $\bGamma(\Psi,t) \xi = \theta |\xi|^2$ and therefore
 $\mu^{CZ}_{B(0)}=1+ 2{\left\lfloor \frac{\theta}{2 \pi}\right\rfloor}$,
%
%
which proves the lemma.
\end{proof}

The zeroes $x\in \Phi_{\mu,H}^{-1}(0)$ in $\Omega = [x]\rel y$  can estimated by Braid Floer homology  $\HB_*([x]\rel y)$ of $\Omega = [x]\rel y$.    
 The \emph{Euler-Floer} characteristic of $\HB_*([x]\rel y)$ is defined as
\begin{equation}
\label{EulerFloer}
\chi\bigl( \HB_*([x]\rel y)\bigr) := \sum_{k\in \Z} (-1)^{k}{\dim \HB_k([x]\rel y)}.
\end{equation}
In \cite{GVVW} the following analogue of the Poincar\'e-Hopf formula is proved.
\begin{proposition}
\label{PHF}
For a proper braid class $[x]\rel y$ and a generic Hamiltonian $H\in \HH(y)$, it holds that
$$
\chi\bigl( \HB_*([x]\rel y)\bigr)= \sum_{x\in\Phi_{\mu,H}^{-1}(0)} (-1)^{\mu^{CZ}(x)}.
$$
\end{proposition}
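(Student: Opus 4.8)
The plan is to deduce the proposition from two standard ingredients: the homological-algebra fact that the Euler characteristic of a bounded chain complex over a field equals the Euler characteristic of its homology, and the Hamiltonian-independence of Braid Floer homology that is built into the construction of $\HB_*$ in the introduction.

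First I would fix a generic $H\in\HH(y)$, so that by Lemma \ref{nondeg} (and Proposition 7.1 of \cite{GVVW}) every element of $\Crit_{\A_H}([x]\rel y)=\Phi_{\mu,H}^{-1}(0)\cap\Omega$ is non-degenerate. By the isolation property recalled above --- Proposition \ref{lemma isolation} together with the compactness of $\M([x]\rel y;H)$ from \cite{GVVW} --- this critical set is finite, so that $C_*([x]\rel y;H)=\bigoplus_k C_k$, with $C_k=\bigoplus_{\mu^{CZ}(x)=k}\Z_2\langle x\rangle$, is a finite-dimensional $\Z_2$-vector space. In particular $C_k=0$ for all but finitely many $k$, the complex $(C_*,\partial_*)$ is bounded, the Euler-Floer characteristic \eqref{EulerFloer} is a finite sum, and $\dim_{\Z_2}C_k=\#\{x\in\Crit_{\A_H}([x]\rel y):\mu^{CZ}(x)=k\}$.

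Next I would apply, to the bounded complex $(C_*,\partial_*)$ over the field $\Z_2$, the elementary identity $\sum_k(-1)^k\dim C_k=\sum_k(-1)^k\dim H_k(C_*,\partial_*)$ (rank--nullity applied to each $\partial_k$, then telescoping). Since $H_*(C_*,\partial_*)=\HB_*([x]\rel y;H)$ by definition, this yields
\[
\chi\bigl(\HB_*([x]\rel y;H)\bigr)=\sum_k(-1)^k\dim\HB_k([x]\rel y;H)=\sum_k(-1)^k\dim C_k=\sum_{x\in\Crit_{\A_H}([x]\rel y)}(-1)^{\mu^{CZ}(x)}.
\]

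Finally I would remove the dependence on $H$. By property (ii) of Braid Floer homology and the definition of $\HB_*([x]\rel y)$ as the inverse limit over the canonical isomorphisms $a_k(H,H')$, one has $\HB_k([x]\rel y)\cong\HB_k([x]\rel y;H)$ for every $k$ and every generic $H$, hence $\chi(\HB_*([x]\rel y))=\chi(\HB_*([x]\rel y;H))$; combining this with the previous display gives the asserted formula. The only point that genuinely requires care is the boundedness of $(C_*,\partial_*)$ --- namely, that there are finitely many non-degenerate critical points in the fiber and that their Conley--Zehnder indices occupy only a finite range of degrees --- since this is what makes both alternating sums finite and the Euler-characteristic argument applicable; it is supplied precisely by the isolation and compactness of a proper braid class fiber recalled in the preliminary sections. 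Everything else is routine.
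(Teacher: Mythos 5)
Your proof is correct: finiteness of $\Crit_{\A_H}([x]\rel y)$ for generic $H\in\HH(y)$ (via the isolation/compactness of the proper fiber) makes $(C_*,\partial_*)$ a bounded complex of finite-dimensional $\Z_2$-vector spaces, the standard identity $\sum_k(-1)^k\dim C_k=\sum_k(-1)^k\dim H_k(C_*,\partial_*)$ then gives the alternating count of critical points by Conley--Zehnder index, and the canonical isomorphisms defining $\HB_*([x]\rel y)$ remove the dependence on $H$. The paper itself offers no proof of this proposition --- it is quoted from \cite{GVVW} --- and your chain-level Euler-characteristic argument is exactly the standard reasoning behind that cited result, so your write-up simply supplies the routine details, and does so correctly.
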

It remains to show that $\chi\bigl( \HB_*([x]\rel y)\bigr)$ and $\deg_{LS}(\Phi_{\mu,H},\Omega,0)$ are related.

\begin{proposition}
\label{CZspecfl}
For a proper braid class $[x]\rel y$ and a generic Hamiltonian $H \in \HH(y)$, we have that
\begin{equation}
\label{LSspecflow3}
\chi\bigl( \HB_*([x]\rel y)\bigr) = -\sum_{x_i\in \Phi_{\mu,H}^{-1}(0)}(-1)^{-\displaystyle\specflow(B(\eta;x), I)},
\end{equation}
where $\eta\mapsto B(\eta;x)$ is given above for $x\in \Phi_{\mu,H}^{-1}(0)$. 
\end{proposition}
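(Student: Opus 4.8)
The plan is to obtain this purely from three results already established for the Hamiltonian model: the Poincar\'e--Hopf formula of Proposition \ref{PHF}, the spectral-flow identity of Proposition \ref{second}, and the explicit computation of $\mu^{CZ}_{B(0)}$ in Lemma \ref{CZB0}. By Proposition \ref{PHF},
$$
\chi\bigl(\HB_*([x]\rel y)\bigr) = \sum_{x\in\Phi_{\mu,H}^{-1}(0)} (-1)^{\mu^{CZ}(x)},
$$
so it suffices to prove, for each non-degenerate zero $x$, the pointwise identity $(-1)^{\mu^{CZ}(x)} = -\,(-1)^{-\specflow(B(\eta;x),I)}$ and then sum over the finitely many (by compactness) zeroes in the fiber $\Omega=[x]\rel y$.

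First I would fix $x\in\Phi_{\mu,H}^{-1}(0)$ and check that the path $\eta\mapsto B(\eta;x) = -J\frac{d}{dt}-S(t;\eta)$ is admissible for the spectral-flow theory of \cite{robbinsalamonmaslovindex}: its endpoint $B(0;x) = -J\frac{d}{dt}-\theta\Id$ is invertible since $\theta\not=2\pi k$, and $B(1;x) = -J\frac{d}{dt}-D^2_xH(x(t),t)$ is invertible since $x$ is a non-degenerate zero (Lemma \ref{nondeg}(ii)); after a small perturbation of the interpolating family $S(t;\eta)$ with endpoints fixed the path may be taken generic, and since only the parity $(-1)^{\specflow(B(\eta;x),I)}$ — a homotopy invariant of paths with invertible endpoints — is used, the choice is immaterial. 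Then Proposition \ref{second} gives $\specflow(B(\eta;x),I) = \mu^{CZ}_{B(0)} - \mu^{CZ}_{B(1)}$, and because $S(t;1)=D^2_xH(x(t),t)$ one has $\mu^{CZ}_{B(1)} = \mu^{CZ}(\Psi(t;1)) = \mu^{CZ}(x)$ in the sense of Subsection \ref{subsec:CZ1}. Choosing $\theta>0$, Lemma \ref{CZB0} shows $\mu^{CZ}_{B(0)} = 1 + 2\left\lfloor\frac{\theta}{2\pi}\right\rfloor$ is odd. Hence, using $(-1)^{-n}=(-1)^n$ for $n\in\Z$,
$$
(-1)^{-\specflow(B(\eta;x),I)} = (-1)^{\mu^{CZ}_{B(0)}}\,(-1)^{\mu^{CZ}(x)} = -\,(-1)^{\mu^{CZ}(x)},
$$
which is the desired identity. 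Summing over $x\in\Phi_{\mu,H}^{-1}(0)$ and substituting into Proposition \ref{PHF} yields (\ref{LSspecflow3}).

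There is no real analytic obstacle: once the hard inputs (Propositions \ref{PHF} and \ref{second}, and the model computation of Lemma \ref{CZB0}) are in hand the argument is sign bookkeeping. The only points that demand care are conventional: verifying that the normalization of $\mu^{CZ}_{B(1)}$ in Proposition \ref{second} agrees on the nose with the Conley--Zehnder index $\mu^{CZ}(x)$ of the $1$-periodic orbit (immediate from the two defining initial-value problems), and tracking that it is exactly the \emph{oddness} of $\mu^{CZ}_{B(0)}$, guaranteed by $\theta>0$ and $\theta\notin 2\pi\Z$, that produces the global minus sign; the final identity is independent of the particular admissible $\theta$ and of the chosen generic interpolation $S(t;\eta)$.
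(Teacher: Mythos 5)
Your argument is correct and is essentially the paper's own proof: both combine Proposition \ref{PHF} with the identity $\specflow(B(\eta;x),I)=\mu^{CZ}_{B(0)}-\mu^{CZ}(x)$ from Proposition \ref{second} and the oddness of $\mu^{CZ}_{B(0)}=1+2\lfloor\theta/2\pi\rfloor$ from Lemma \ref{CZB0} to get the pointwise sign identity $(-1)^{\mu^{CZ}(x)}=-(-1)^{-\specflow(B(\eta;x),I)}$ and then sum over the finitely many zeroes. Your added remarks on endpoint invertibility and genericity of the interpolation $S(t;\eta)$ are consistent with the paper's setup and do not change the route.
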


\begin{proof}
By
Proposition \ref{second}  and Lemma \ref{CZB0} the spectral flow satisfies, 
\begin{align*}
\mu^{CZ}(x) &= \mu^{CZ}_{B(1;x)} = \mu^{CZ}_{B(0)} - \specflow(B(\eta;x), I)\\
&= 1+ 2{\left\lfloor \tfrac{\theta}{2 \pi}\right\rfloor} - \specflow(B(\eta;x), I).
\end{align*}
This implies
$$
(-1)^{\displaystyle\mu^{CZ}(x)} =  -(-1)^{  - \displaystyle\specflow(B(\eta;x), I)},
$$
 which completes the proof.
\end{proof}

\subsection{Mechanical systems}
\label{subsec:CZ2}
A mechanical system is defined  as the Euler-Lagrange equations of the Lagrangian density $L(q,t)
= \frac{1}{2} q_t^2 - V(q,t)$. The linearization at a critical points $q(t)$ of the Lagrangian action is given by
the unbounded opeartor
$$
-\frac{d^2}{dt^2} - D^2_qV(q(t),t): H^2(\R/\Z) \subset L^2(\R/\Z) \to L^2(\R/\Z).
$$
Consider a path of unbounded self-adjoint operators on $L^2(\R/\Z)$ given by
$\eta \mapsto  D(\eta) =  -\frac{d^2}{dt^2} - Q(t;\eta)$, with $Q(t;\eta)$ smooth.
If $D(0)$ and $D(1)$ are invertible, then the spectral flow is well-defined.
\begin{proposition}
\label{prop:morsespectral11}
Assume that the endpoints of $\eta\mapsto D(\eta)$  are invertible. Then
\begin{equation}
\label{morsespectral}
\specflow(D(\eta), I)=\beta_{D(0)}-\beta_{D(1)},
\end{equation}
where $\beta_{D(0)}$ and $\beta_{D(1)}$ are the Morse indices of $D(0)$ and $D(1)$ respectively.
\end{proposition}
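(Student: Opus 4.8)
The plan is to prove \eqref{morsespectral} by a direct eigenvalue-counting argument, entirely parallel to the proof of Proposition \ref{prop:morsespectral} but carried out at the level of the spectral flow rather than the parity.

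First I would record the structural facts about the family $\eta\mapsto D(\eta)$. Since $Q(t;\eta)$ is smooth on the compact set $\R/\Z\times I$, it is uniformly bounded, say $\|Q(t;\eta)\|\le M$ for all $(t,\eta)$; hence as quadratic forms $D(\eta)\ge -\tfrac{d^2}{dt^2}-M$, so by the min-max principle every eigenvalue of every $D(\eta)$ is bounded below by $-M$ and only finitely many of them are negative. In particular each Morse index $\beta_{D(\eta)}$ is finite, and because $D(0)$ and $D(1)$ are invertible the integers $\beta_{D(0)},\beta_{D(1)}$ are well defined. Moreover $D(\eta)-D(\eta_1)=Q(t;\eta_1)-Q(t;\eta)$ is a bounded operator whose norm tends to $0$ as $\eta\to\eta_1$, so $\eta\mapsto D(\eta)$ is norm-resolvent continuous, the ordered eigenvalues vary continuously in $\eta$, and $\beta_{D(\eta)}$ is locally constant on the set $\{\eta : 0\notin\sigma(D(\eta))\}$.

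Next I would reduce to a generic path. By the homotopy invariance of the spectral flow, keeping the endpoints fixed (so that they stay invertible), it suffices to establish the formula for a path $\eta\mapsto D(\eta)$ whose crossings $\eta_0\in(0,1)$ are all simple and regular; the right-hand side $\beta_{D(0)}-\beta_{D(1)}$ is unaffected since the endpoints are unchanged. At such a crossing $\eta_0$ there is, as in the discussion preceding \eqref{specflowfB}, a $C^1$ eigenvalue branch $\ell(\eta)$ of $D(\eta)$ with $\ell(\eta_0)=0$ and $\ell'(\eta_0)\neq 0$, while every other eigenvalue stays bounded away from $0$ on a small interval $(\eta_0-\delta,\eta_0+\delta)$. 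Comparing $\beta_{D(\eta)}$ just to the left and just to the right of $\eta_0$, only the branch $\ell$ contributes to the change, and it contributes $1$ to the count of negative eigenvalues precisely when $\ell(\eta)<0$; hence $\beta_{D(\eta_0+\delta)}-\beta_{D(\eta_0-\delta)}=-\sgn\ell'(\eta_0)$.

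Finally I would sum over the finitely many crossings $\eta_0^{(1)}<\cdots<\eta_0^{(N)}$. Since $\beta_{D(\eta)}$ is constant on each of the complementary subintervals, telescoping gives
\[
\beta_{D(1)}-\beta_{D(0)}=\sum_{i=1}^{N}\bigl(-\sgn\ell'(\eta_0^{(i)})\bigr)=-\specflow(D(\eta),I),
\]
which is \eqref{morsespectral}. (Alternatively one may invoke the general spectral-flow formula of \cite{RS1,robbinsalamonmaslovindex} for paths of self-adjoint operators with compact resolvent.) The point requiring genuine care — which I regard as the \emph{main technical obstacle} — is the control of the spectrum near $\pm\infty$: the uniform lower bound $D(\eta)\ge -M$ is exactly what rules out negative eigenvalues escaping to $-\infty$ (or entering from $-\infty$) along the path, so that every jump of $\beta_{D(\eta)}$ is accounted for by an honest crossing at $0$, while eigenvalues drifting off to $+\infty$ are harmless since they never contribute to the Morse index.
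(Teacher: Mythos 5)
Your argument is correct, but it takes a genuinely different route from the paper. The paper does not track the Morse index along the given path at all: it replaces $\eta\mapsto D(\eta)$ (up to homotopy with fixed endpoints) by the concatenation of the two monotone linear paths $\eta\mapsto D(0)+\eta c\Id$ and $\eta\mapsto D(1)+(1-\eta)c\Id$ with $c\gg 0$, and then computes each spectral flow explicitly: the crossings of the first path occur at $\eta_i=-\lambda_i/c\in(0,1)$, one for each negative eigenvalue $\lambda_i$ of $D(0)$, and since $\frac{d}{d\eta}D_1(\eta)=c\Id$ is positive definite every crossing counts $+1$, giving $\specflow(D_1,I)=\beta_{D(0)}$ and similarly $\specflow(D_2,I)=-\beta_{D(1)}$; the concatenation and homotopy-invariance properties of the spectral flow then finish the proof. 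You instead perturb the original path to a generic one and telescope the change of $\beta_{D(\eta)}$ across the simple crossings, showing $\beta_{D(\eta_0+\delta)}-\beta_{D(\eta_0-\delta)}=-\sgn\ell'(\eta_0)$, with the uniform bound $\|Q(t;\eta)\|\le M$ guaranteeing finiteness of the Morse indices and preventing spectrum from entering or escaping at $-\infty$. What the paper's argument buys is that no genericity perturbation and no continuity analysis of the Morse index along the actual path are needed — the only crossings one ever inspects are those of the two explicit monotone paths, where the crossing operator is trivially $c\Id$; what your argument buys is a more self-contained, eigenvalue-level proof that makes explicit why the identity ``spectral flow equals net change of Morse index'' holds in this semibounded setting, essentially reproving the relevant case of the general formula in \cite{robbinsalamonmaslovindex} rather than quoting concatenation plus an explicit computation. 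Both proofs rest on homotopy invariance of the spectral flow with fixed invertible endpoints, so neither is circular, and your identification of the control of the spectrum near $-\infty$ as the one point needing care is exactly right.
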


\begin{proof}
In \cite{robbinsalamonmaslovindex} the concatenation property of the spectral flow is proved.
We use concatenation as follows. Let $c>0$ be a sufficiently large constant such that $D(0)+c\Id$ and $D(1)+c\Id$ are positive definite
self-adjoint operators on $L^2(\R/\Z)$. Consider the paths $\eta\mapsto D_1(\eta) = D(0) + \eta c\Id$ and
$\eta\mapsto D_2(\eta) =  D(1) + (1-\eta) c\Id$. Their concatenation $D_1\#D_2$ is a path from $D(0)$ to $D(1)$ and
$\eta \mapsto D_1\#D_2$ is homotopic to $\eta\mapsto D(\eta)$. Using the homotopy invariance and the concatenation property of the spectral flow we obtain
$$
\specflow(D(\eta), I) = \specflow(D_1\#D_2, I) = \specflow(D_1,I) + \specflow(D_2,I).
$$
Since $D(0)$  is invertible, the regular crossings of $D_1(\eta)$   are given by
$\eta^1_i = -\frac{\lambda_i}{c}$, where $\lambda_i$ are  negative eigenvalues of $D(0)$.
By the positive definiteness of $D(0)+c\Id$, the negative eigenvalues of $D(0)$ satisfy
$0> \lambda_i > -c$. For the crossing $\eta_i$ this implies
$$
0 < \eta_i = -\frac{\lambda_i}{c} < 1,
$$ 
and therefore the number of crossings equals the number of negative eigenvalues of $D(0)$ counted with multiplicity.
By the choice of $c$, we also have that $\frac{d}{d\eta} D_1(\eta) =  c\Id$ is positive definite and therefore the signature of the crossing operator of
$D_1(\eta)$ is exactly the number of negative eigenvalues of $D(0)$, i.e. $\specflow(D_1,I) =\beta_{D(0)}$.
For $D_2(\eta)$ we obtain, $\specflow(D_2,I) = - \beta_{D(1)}$. This proves that 
$\specflow(D(\eta), I)=\beta_{D(0)}-\beta_{D(1)}$.
%
%
\end{proof}
 
For a mechanical system we have the Hamiltonian $H(x,t) = \frac{1}{2} p^2 + V(q,t)$.
As such the Conley-Zenhder index of a critical point $q$ can be defined as the Conley-Zehnder index
of $x = (q_t,q)$ using  the mechanical Hamiltonian, see also \cite{Abbondandolo:2003fy} and \cite{Duistermaat:1976vq}.

\begin{lemma}
\label{index}
Let $q$ be a critical point of the mechanical Lagrangian action, then the associated Conley-Zehnder index
$\mu^{CZ}(x)$ is well-defined, and 
$\mu^{CZ}(x) =\beta(q)$,
where $\beta(q)$ is the Morse index of $q$.
\end{lemma}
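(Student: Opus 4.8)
The plan is to reduce the lemma to the spectral-flow identities already established in Section~\ref{sec:CZ}. Write the mechanical Hamiltonian as $H(x,t)=\tfrac12 p^2+V(q,t)$, set $x=(q_t,q)$, and abbreviate $W(t):=D^2_qV(q(t),t)$. First I would check that $\mu^{CZ}(x)$ is defined: by Lemma~\ref{nondeg}, $x$ is a non-degenerate $1$-periodic solution of $X_H$ iff $-J\tfrac{d}{dt}-D^2_xH(x(t),t)$ is invertible, and since $D^2_xH(x(t),t)=\diag(1,W(t))$, a vector $u=(u_1,u_2)$ lies in its kernel iff $u_1=\dot u_2$ and $\bigl(-\tfrac{d^2}{dt^2}-W(t)\bigr)u_2=0$. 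Thus $u\mapsto u_2$ is a linear isomorphism onto $\ker\bigl(-\tfrac{d^2}{dt^2}-W(t)\bigr)$, so $x$ is non-degenerate precisely when $q$ is a non-degenerate critical point of the mechanical Lagrangian action --- as must be assumed for $\mu^{CZ}(x)$ to make sense --- and then $\mu^{CZ}(x)$ is well defined as in Subsection~\ref{subsec:CZ1}.

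The heart of the argument is a careful choice of deformation path. In the construction of Subsection~\ref{subsec:CZ1} I would take $\theta=1$ and the \emph{block-diagonal} family of symmetric matrices $S(t;\eta):=\diag\bigl(1,\ (1-\eta)+\eta\,W(t)\bigr)$, $\eta\in I$, which satisfies $S(t;0)=\Id$ and $S(t;1)=D^2_xH(x(t),t)$. After a small perturbation of the $qq$-entry, supported away from $\eta=0,1$, the path $\eta\mapsto B(\eta;x)=-J\tfrac{d}{dt}-S(t;\eta)$ is generic, and its endpoints are invertible because $1\notin 2\pi\Z$. Proposition~\ref{second} together with Lemma~\ref{CZB0} (note $\lfloor 1/2\pi\rfloor=0$) then gives
$$
\specflow\bigl(B(\eta;x),I\bigr)=\mu^{CZ}_{B(0)}-\mu^{CZ}(x)=1-\mu^{CZ}(x).
$$

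Next I would run the same computation on the Lagrangian side. Let $D(\eta):=-\tfrac{d^2}{dt^2}-\bigl((1-\eta)+\eta\,W(t)\bigr)$ be the corresponding path of Subsection~\ref{subsec:CZ2} (carrying the same perturbation). As above, $u=(u_1,u_2)\mapsto u_2$ is for every $\eta$ a linear isomorphism $\ker B(\eta;x)\to\ker D(\eta)$, so the two paths have identical crossing parameters, simultaneously simple and regular. At a simple crossing $\eta_0$, with $\ker B(\eta_0;x)$ spanned by $(\dot u_2,u_2)$, formula~(\ref{czinnerprod}) and $\partial_\eta S=\diag(0,W(t)-1)$ show that the eigenvalue crossing of $B(\eta;x)$ at $\eta_0$ has the sign of $-\bigl((W-1)u_2,u_2\bigr)_{L^2}$; since $\tfrac{d}{d\eta}D(\eta)=-(W(t)-1)$, the eigenvalue crossing of $D(\eta)$ at $\eta_0$ has the same sign. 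Hence the two paths have matching crossing signs, so $\specflow(B(\eta;x),I)=\specflow(D(\eta),I)$. By Proposition~\ref{prop:morsespectral11} this equals $\beta_{D(0)}-\beta_{D(1)}$, where $\beta_{D(1)}=\beta(q)$ is the Morse index of $-\tfrac{d^2}{dt^2}-W(t)$ by definition, while $D(0)=-\tfrac{d^2}{dt^2}-1$ has spectrum $\{(2\pi k)^2-1:k\in\Z\}$ and hence a unique negative eigenvalue, so $\beta_{D(0)}=1$. Combining the three identities,
$$
1-\mu^{CZ}(x)=\specflow\bigl(B(\eta;x),I\bigr)=\specflow\bigl(D(\eta),I\bigr)=1-\beta(q),
$$
which gives $\mu^{CZ}(x)=\beta(q)$.

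I expect the main obstacle to be matching the \emph{signs} of the two spectral flows. A homotopy that contracted the whole reference matrix $\theta\Id$ down to $\diag(1,\,\cdot\,)$ would introduce a $t$-dependent coefficient in the reduced second-order operator and reverse the crossing form, so it is essential to freeze the $pp$-block of $S(t;\eta)$ at $1$ --- equivalently, to take $\theta=1$ rather than $\theta$ small --- and then to check that $D(0)=-\tfrac{d^2}{dt^2}-1$ and $B(0;x)=-J\tfrac{d}{dt}-\Id$ are invertible, which holds since $(2\pi)^2>1$ and $1\notin 2\pi\Z$. (The identity $\mu^{CZ}(x)=\beta(q)$ for mechanical systems is classical and could alternatively be quoted from \cite{Abbondandolo:2003fy,Duistermaat:1976vq}.)
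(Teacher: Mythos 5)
Your proof is correct, and its skeleton is the same as the paper's: identify $\ker B(\eta)$ with $\ker D(\eta)$ via $u\mapsto u_2$, check that the crossing forms (eigenvalue derivatives) have the same sign, conclude $\specflow(B(\eta),I)=\specflow(D(\eta),I)$, and then combine Propositions \ref{second} and \ref{prop:morsespectral11}. Where you differ is in the choice of reference endpoint and how the two endpoint indices are evaluated. The paper takes $S(t;0)=\diag\bigl(1,D^2_qV(q(t),t)+c\bigr)$ with $c\ll 0$, so that $D(0)$ is positive definite and $\beta_{D(0)}=0$; since this endpoint is not of the form $\theta\Id$, Lemma \ref{CZB0} does not apply and the paper must compute $\mu^{CZ}_{B(0)}=0$ by hand, analysing the crossings of the associated symplectic path on $(0,1]$ and the signature contribution at $t_0=0$. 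You instead freeze the reference at $S(t;0)=\Id$ (i.e.\ $\theta=1$), which literally satisfies the hypotheses of Proposition \ref{second}; then $\mu^{CZ}_{B(0)}=1$ comes for free from Lemma \ref{CZB0}, and $\beta_{D(0)}=1$ from the explicit spectrum of $-\tfrac{d^2}{dt^2}-1$ on $\R/\Z$, giving $1-\mu^{CZ}(x)=1-\beta(q)$. Both routes are valid; yours buys a shorter endpoint computation and avoids stretching Proposition \ref{second} beyond its stated endpoint condition, while the paper's shifted endpoint makes both indices vanish at the cost of an ad hoc Conley--Zehnder computation. Your genericity step (perturbing only the $qq$-entry to get simple crossings, away from the endpoints, so the block structure and the kernel correspondence survive) is at the same level of rigor as the paper's analogous "choose the path regular" assertion, and is in any case harmless since spectral flow is a homotopy invariant rel invertible endpoints.
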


\begin{proof}
As before, consider the curves $\eta\mapsto B(\eta)$ and $\eta\mapsto D(\eta)$, $\eta\in I=[0,1]$ given by
$$
B(\eta) = -J \frac{d}{dt} - \left(\begin{array}{cc}1 & 0 \\0 & Q(t;\eta)\end{array}\right),\quad
D(\eta) = -\frac{d^2}{dt^2} - Q(t;\eta).
$$
The crossing forms of the curves are the same --- $\bGamma(B,\eta) = \bGamma(D,\eta)$ --- and therefore 
also the crossings $\eta_0$ are identical. Indeed,  $B(\eta_0)$ is non-invertible if and only if $D(\eta_0)$ is
non-invertible.
Consequently, $\specflow\bigl(B(\eta),I\bigr) = \specflow\bigl(D(\eta),I\bigr)$ and the
Propositions \ref{second} and \ref{prop:morsespectral11}    then imply that 
\begin{equation}
\label{funid2}
\beta_{D(0)} - \beta_{D(1)} = \mu^{CZ}_{B(0)} - \mu^{CZ}_{B(1)}. 
\end{equation}
Now choose $Q(t;\eta)$ such that  $Q(t;0) = d^2 V(q(t),t) +c$ and $Q(t;1) = D^2_q V(q(t),t)$ and
such that $\eta\mapsto B(\eta)$ and $\eta\mapsto D(\eta)$ are regular curves.
If $c \ll 0$, then $\beta_{D(0)} = 0$.
In order to compute $ \mu^{CZ}_{B(0)}$ we invoke the crossing from $\bGamma(\Psi,t)$ for the associated symplectic path $\Psi(t)$ as
explained in Section \ref{sec:CZ}. Crossings at $t_0\in (0,1]$ correspond to non-trivial solutions
of the equation $D(0)\psi =0$ on $[0,t_0]$, with periodic boundary conditions. To be more precise,
let $\Psi = (\phi,\psi)$, then $B(0)\Psi = 0$ is equivalent to $\psi_t = \phi$ and $-\phi_t - \bigl(D^2_qV(q(t),t) + c\bigr) \psi= 0$, which yields the equation $D(0)\psi =0$.
For the latter the kernel is trivial
for any $t_0 \in (0,1]$. Indeed, if $\psi$ is a solution, then $\int_0^{t_0} |\psi_t|^2 = \int_0^{t_0} (D^2_q V(q,t) + c) \psi^2
<0$, which is a contradiction.
Therefore, there are no crossing $t_0 \in (0,1]$. As for $t_0=0$ we have that $ \bigl(D^2_qV(q(0),0) + c\bigr) < 0$,
which implies that $\sgn S(0;0) = 0$
and therefore  $\mu^{CZ}_{B(0)} =0$,
which proves the lemma.
\end{proof}


\section{The spectral flows are the same}
\label{sec:flowsame}
In order to show that the spectral flows are the same we use the fact that the paths $\eta\mapsto C(\eta)$ and
$\eta \mapsto B(\eta)$ for a non-degenerate zero $x_i\in \Phi_{\mu,H}^{-1}(0)$ are chosen to have only simple crossings for their crossing operators, i.e.
  zero eigenvalues are simple.
 In this case the spectral flows are determined by   the signs of the derivatives of the eigenvalues at the crossings.
For $\eta \mapsto B(\eta)$ the expression given by Equation (\ref{czinnerprod}) and from Equation (\ref{eqn:eig2}) a similar expression for
$\eta\mapsto C(\eta)$ can be derived and is given by 
\begin{equation}
\label{morseinnerprod}
\lambda'(\eta_0) = - \bigl(N_{\mu}^{-1}\partial_{\eta}S(t;\eta_0)\psi(\eta_0),\psi(\eta_0)\bigr)_{H^{1/2}}
 =
-\bigl(\partial_{\eta}S(t;\eta_0)\psi(\eta_0),\psi(\eta_0)\bigr)_{L^{2}} 
\end{equation}

 \begin{lemma}\label{labeloperatorAB}
The sets $\{\eta\in [0,1]: C(\eta)\psi(\eta)=0\}$ and $\{\eta\in [0,1]: B(\eta)\phi(\eta)=0\}$ are the same, and the operators $C(\eta)$ and $B(\eta)$ have the same eigenfunctions at crossings $\eta_0$.
In particular, $\eta \mapsto B(\eta)$ is generic if and only if $\eta \mapsto C(\eta)$ is generic.
\end{lemma}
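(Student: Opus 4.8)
\emph{Proof plan.} The plan is to observe that $C(\eta)$ is obtained from $B(\eta)$ by left–multiplication with the fixed, invertible Fourier multiplier $N_\mu^{-1}$, so that the equations $C(\eta)\psi=0$ and $B(\eta)\psi=0$ are literally the same equation, after which all the assertions are immediate.

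First I would record the multiplier identity underlying (\ref{eqn:adjusted}). On the Fourier mode $e^{2\pi Jkt}$ the operators $N_\mu$, $P_\mu$ of (\ref{eqn:extra-self}) act by the scalars $2\pi|k|+\mu$ and $\tfrac{2\pi k+\mu}{2\pi|k|+\mu}$, while $-J\tfrac{d}{dt}$ acts by $2\pi k$; hence $N_\mu P_\mu=-J\tfrac{d}{dt}+\mu=L_\mu$ as operators (both with domain $H^1(\R/\Z)$), and for $\mu>0$ the map $N_\mu:H^{s+1}(\R/\Z)\to H^{s}(\R/\Z)$ is an isomorphism. This is the same computation that produced $C(\eta)=P_\mu-N_\mu^{-1}(S(t;\eta)+\mu)$.

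Next, for $\psi\in H^{1/2}(\R/\Z)$ the equation $C(\eta)\psi=0$ reads $P_\mu\psi=N_\mu^{-1}(S(t;\eta)+\mu)\psi$. Applying the isomorphism $N_\mu$ and using $N_\mu P_\mu=L_\mu$ turns this into $L_\mu\psi=(S(t;\eta)+\mu)\psi$, i.e.
\begin{equation*}
-J\tfrac{d}{dt}\psi-S(t;\eta)\psi=0 ,
\end{equation*}
which is exactly $B(\eta)\psi=0$. Every step is reversible, so $C(\eta)\psi=0\iff B(\eta)\psi=0$. A short regularity remark matches the domains: a solution of $C(\eta)\psi=0$ a priori lies only in $H^{1/2}$, but $L_\mu\psi=(S(t;\eta)+\mu)\psi$ with $S$ smooth bootstraps $\psi\in H^{1/2}\Rightarrow\psi\in H^{3/2}\Rightarrow\cdots\Rightarrow\psi\in C^\infty(\R/\Z)$, in particular $\psi\in H^{1}(\R/\Z)=\mathrm{dom}\,B(\eta)$. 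Hence $\ker C(\eta)=\ker B(\eta)$ (as subspaces of $C^\infty(\R/\Z)$) for every $\eta\in I$.

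Finally I would read off the conclusions. Since the kernels agree for all $\eta$, the crossing set of $\eta\mapsto C(\eta)$ coincides with that of $\eta\mapsto B(\eta)$, and $\dim\ker C(\eta_0)=\dim\ker B(\eta_0)$ at each crossing; a crossing is therefore simple for one path exactly when it is simple for the other, so $\eta\mapsto C(\eta)$ is generic if and only if $\eta\mapsto B(\eta)$ is. At a simple crossing $\eta_0$ the common kernel is one–dimensional, so the crossing eigenfunctions $\psi(\eta_0)$ and $\phi(\eta_0)$ span the same line and agree up to a scalar — precisely what is needed to compare $\lambda'(\eta_0)$ in (\ref{morseinnerprod}) with $\ell'(\eta_0)$ in (\ref{czinnerprod}). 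The only point asking for a bit of care is this domain/regularity matching together with the invertibility of $N_\mu$ on the relevant Sobolev scale; the rest is the elementary multiplier computation.
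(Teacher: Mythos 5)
Your proof is correct and follows essentially the same route as the paper: both arguments rest on the multiplier identity $N_\mu P_\mu = L_\mu$ (the paper phrases it by passing from $P_\mu\psi = N_\mu^{-1}(S+\mu)\psi$ to $\psi = L_\mu^{-1}(S+\mu)\psi$, you phrase it by applying $N_\mu$ directly), reducing $C(\eta)\psi=0$ to $B(\eta)\psi=0$. Your added remarks on the regularity bootstrap matching the domains and on the equality of kernel dimensions giving the genericity equivalence are points the paper leaves implicit, but they do not change the substance of the argument.
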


\begin{proof}
Given $\eta_0\in [0,1]$ such that $C(\eta_0)\psi(\eta_0)=0$, then
$$
P_{\mu}\psi(\eta_0)-N_\mu^{-1}(S(\eta_0;t)+\mu)\psi(\eta_0)=0,
$$ 
and thus
 $\psi(\eta_0)-L_{\mu}^{-1}(S(\eta_0;t)+\mu)\psi(\eta_0)=0$, which
is equivalent to   the equation $\left(-J\frac{d}{dt}-S(t;\eta_0)\right)\psi(\eta_0)=0,$ i.e. $B(\eta_0)\psi(\eta_0)=0.$ 
\end{proof}

\begin{lemma}
\label{sign1}
For all $\mu >0$, with $\mu\not = 2\pi k$, $k\in \Z$,
$\sgn \lambda'(\eta_0)=\sgn \ell'(\eta_0)$ for all crossings at $\eta_0$.
\end{lemma}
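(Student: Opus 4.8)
The plan is to reduce both derivative formulas to the \emph{same} $L^2$-pairing and then absorb the difference between the two normalizations into a positive scalar. The key point is that the factor $N_\mu^{-1}$ built into $C(\eta)$ is exactly neutralized by the $H^{1/2}$-inner product, leaving a bare $L^2$-pairing identical in form to the one appearing for $B(\eta)$.

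First I would invoke Lemma \ref{labeloperatorAB}: at any crossing $\eta_0$ one has $\ker C(\eta_0) = \ker B(\eta_0)$, and by genericity this common kernel is one-dimensional. Hence the eigenfunction $\psi(\eta_0)$ entering the formula for $\lambda'(\eta_0)$ (normalized in $H^{1/2}(\R/\Z)$) and the eigenfunction $\phi(\eta_0)$ entering the formula for $\ell'(\eta_0)$ (normalized in $L^2(\R/\Z)$) differ only by a scalar; fixing the signs consistently we may write $\psi(\eta_0) = \kappa\,\phi(\eta_0)$ with $\kappa = \Vert\phi(\eta_0)\Vert_{L^2}\big/\Vert\phi(\eta_0)\Vert_{H^{1/2}}$, a strictly positive real number (finite and nonzero because any element of $\ker B(\eta_0) = \ker\bigl(-J\tfrac{d}{dt}-S(t;\eta_0)\bigr)$ is a nonzero smooth, hence $H^{1/2}$-bounded, function).

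Next I would use the second equality in (\ref{morseinnerprod}), which already rewrites $\lambda'(\eta_0)$ — originally the $H^{1/2}$-inner product of $N_\mu^{-1}\partial_\eta S(t;\eta_0)\psi(\eta_0)$ against $\psi(\eta_0)$ — as the $L^2$-pairing $\lambda'(\eta_0) = -\bigl(\partial_\eta S(t;\eta_0)\psi(\eta_0),\psi(\eta_0)\bigr)_{L^2}$; this is exactly the identity $(x,y)_{H^{1/2}} = (N_\mu x, N_\mu y)_{L^2}$ combined with the self-adjointness of $N_\mu$. Since $\partial_\eta S(t;\eta_0)$ is the \emph{same} symmetric matrix field appearing in both paths $\eta\mapsto B(\eta)$ and $\eta\mapsto C(\eta)$, substituting $\psi(\eta_0)=\kappa\,\phi(\eta_0)$ and comparing with (\ref{czinnerprod}) gives $\lambda'(\eta_0) = -\kappa^2\bigl(\partial_\eta S(t;\eta_0)\phi(\eta_0),\phi(\eta_0)\bigr)_{L^2} = \kappa^2\,\ell'(\eta_0)$. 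As $\kappa^2>0$, this yields $\sgn\lambda'(\eta_0)=\sgn\ell'(\eta_0)$ for every crossing $\eta_0$, and hence for all $\mu>0$ with $\mu\neq 2\pi k$.

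There is no genuine analytic obstacle here; the whole content is the bookkeeping between the $H^{1/2}$- and $L^2$-normalizations of the common kernel vector. The only point worth stressing is that Lemma \ref{labeloperatorAB} is precisely what guarantees that it is literally the same eigenfunction (up to the positive scalar $\kappa$) occurring on both sides, so that the comparison of crossing derivatives — and therefore of the signs that determine the two spectral flows — is legitimate.
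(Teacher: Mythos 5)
Your proof is correct and follows essentially the same route as the paper: identify the two eigenfunctions at a crossing via Lemma \ref{labeloperatorAB}, use the identity $(N_\mu^{-1}\,\cdot\,,\,\cdot\,)_{H^{1/2}} = (\,\cdot\,,\,\cdot\,)_{L^2}$ to reduce both crossing derivatives to the same $L^2$-pairing, and absorb the difference in normalization into the positive factor $\Vert\phi(\eta_0)\Vert_{H^{1/2}}^{-2}$ (your $\kappa^2$). The only difference is that you make the appeal to Lemma \ref{labeloperatorAB} explicit, which the paper leaves implicit.
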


\begin{proof}
The eigenfunctions $\psi(\eta_{0})$ in Equation (\ref{morseinnerprod}) for $\lambda'(\eta_{0})$ are normalized in
$H^{1/2}(\R/\Z)$ and therefore they relate to the eigenfunctions $\phi(\eta_{0})$ in Equation (\ref{czinnerprod}) 
for $\ell'(\eta_{0})$ as follows:
$$
\psi(\eta_{0}) = \frac{\phi(\eta_{0})}{\Vert \phi(\eta_{0})\Vert_{H^{1/2}}}, \quad \Vert \phi(\eta_{0})\Vert_{L^{2}} =1.
$$
Combining Equations   (\ref{czinnerprod}) and (\ref{morseinnerprod}) then gives
\begin{align*}
\lambda'(\eta_{0}) &= -\bigl(\partial_{\eta}S(t;\eta_0)\psi(\eta_0),\psi(\eta_0)\bigr)_{L^{2}}\\
&= - \frac{1}{\Vert\phi(\eta_{0})\Vert^{2}_{H^{1/2}}} \bigl(\partial_{\eta}S(t;\eta_0)\phi(\eta_0),\phi(\eta_0)\bigr)_{L^{2}}
= \frac{\ell'(\eta_{0})}{\Vert\phi(\eta_{0})\Vert^{2}_{H^{1/2}}},
\end{align*}
which proves the lemma.
 \end{proof}

Lemma \ref{sign1} implies that  for any non-degenerate $x\in \Phi_{\mu,H}^{-1}(0)\cap \Omega$ 
\begin{equation}
\label{eqn:specsame1}
\specflow(C(\eta;x),I) = \specflow(B(\eta;x),I),
\end{equation}
where $B(\eta;x)$ and $C(\eta;x)$ are the above described path associated with $x$. Therefore 
\begin{equation}
\label{eqn:link}
  \parity(A(\eta;x),I) = (-1)^{\displaystyle{\specflow(C(\eta;x),I)}}
=(-1)^{\displaystyle{\specflow(B(\eta;x),I)}},
\end{equation}
which  yields the following proposition.

\begin{proposition}
\label{eqspfl}
The Leray-Schauder degree satisfies
$$
\deg_{LS}(\Phi_{\mu,H},\Omega,0) = -\chi\bigl(\HB_*([x]\rel y)\bigr).
$$
\end{proposition}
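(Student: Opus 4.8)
The plan is to assemble the chain of identities already proved in the excerpt. Since $H\in\HH(y)$ is generic, all zeroes $x\in\Phi_{\mu,H}^{-1}(0)\cap\Omega$ are non-degenerate, and by compactness (Proposition \ref{lemma isolation}) there are only finitely many of them; moreover each is isolated so its local Leray--Schauder degree is defined. The starting point is the additivity of the Leray--Schauder degree over the isolated zeroes,
$$
\deg_{LS}(\Phi_{\mu,H},\Omega,0) = \sum_{x\in\Phi_{\mu,H}^{-1}(0)\cap\Omega} \deg_{LS}\bigl(\Phi_{\mu,H},B_\epsilon(x),0\bigr),
$$
where $\epsilon>0$ is chosen small enough (uniformly) that the balls $B_\epsilon(x)$ are disjoint and each contains exactly one zero.

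Next I would rewrite each local degree. By Equation (\ref{eqn:connect}) (which combines Propositions \ref{beta}, \ref{prop:morsespectral}, \ref{LSspecflow1} with the parity identities (\ref{eqn:parity2}) and (\ref{eqn:par-spec})),
$$
\deg_{LS}\bigl(\Phi_{\mu,H},B_\epsilon(x),0\bigr) = \parity(A(\eta;x),I) = (-1)^{\specflow(C(\eta;x),I)}.
$$
Then I invoke Equation (\ref{eqn:specsame1}) — the content of Section \ref{sec:flowsame}, i.e. Lemmas \ref{labeloperatorAB} and \ref{sign1} — to replace the spectral flow of $C(\eta;x)$ by that of $B(\eta;x)$, giving $\deg_{LS}(\Phi_{\mu,H},B_\epsilon(x),0) = (-1)^{\specflow(B(\eta;x),I)}$. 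Note that $(-1)^{\specflow} = (-1)^{-\specflow}$, so this equals $(-1)^{-\specflow(B(\eta;x),I)}$.

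Finally, summing over $x$ and applying Proposition \ref{CZspecfl} (which itself packages Proposition \ref{second}, Lemma \ref{CZB0} and Proposition \ref{PHF}),
$$
\deg_{LS}(\Phi_{\mu,H},\Omega,0) = \sum_{x\in\Phi_{\mu,H}^{-1}(0)\cap\Omega} (-1)^{-\specflow(B(\eta;x),I)} = -\chi\bigl(\HB_*([x]\rel y)\bigr),
$$
the extra sign coming from the relation $(-1)^{\mu^{CZ}(x)} = -(-1)^{-\specflow(B(\eta;x),I)}$ established in the proof of Proposition \ref{CZspecfl} (ultimately from the $1+2\lfloor\theta/2\pi\rfloor$ term in Lemma \ref{CZB0} being odd). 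This completes the proof. The proof is essentially bookkeeping: every ingredient is already in place, and the only thing requiring care is checking that the choices of auxiliary data — $\mu>0$ with $\mu\neq 2\pi k$, $\theta>0$ with $\theta\neq 2\pi k$, the path $S(t;\eta)$, and $\epsilon$ — can be made simultaneously and uniformly over the finitely many zeroes $x$, so that all the cited propositions apply to the same degree. The main (modest) obstacle is thus just tracking signs and ensuring the genericity/transversality choices in Sections \ref{sec:CZ} and \ref{sec:flowsame} are compatible with the single global homotopy fixed in Section 2.
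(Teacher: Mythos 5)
Your proposal is correct and follows essentially the same route as the paper: additivity of the Leray--Schauder degree over the finitely many non-degenerate zeroes, the identification of each local degree with $\parity(A(\eta;x),I)=(-1)^{\specflow(B(\eta;x),I)}$ via Equations (\ref{eqn:connect}) and (\ref{eqn:specsame1}), and the final summation through Proposition \ref{CZspecfl}. The only (minor) difference is that the paper states the result for an arbitrary $H\in\Hh(y)$ and begins by passing to a generic $\tilde H\in\HH(y)$ using the isolating property of $\Omega$ and homotopy invariance of the degree, whereas you assume genericity of $H$ at the outset; that reduction is the one sentence you would need to add.
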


\begin{proof}
For any Hamiltonian $H\in \Hh(y)$ there exists a generic Hamiltonian $\tilde H \in \HH(y)$
such  all zeroes
 $x_i\in \Phi_{\mu,\tilde H}^{-1}(0)\cap \Omega$ are non-degenerate.
 Since $\Omega = [x]\rel y$ is isolating for all Hamiltonians in $\Hh(y)$, the invariance if the Leray-Schauder degree
 yields $\deg_{LS}\bigl(\Phi_{\mu,H},\Omega,0\bigr) = \deg_{LS}\bigl(\Phi_{\mu,\tilde H},\Omega,0\bigr)$.
 From the Propositions \ref{LSspecflow1} and \ref{CZspecfl} and Equation (\ref{eqn:link})
we conclude that
\begin{align*}
\deg&_{LS}\bigl(\Phi_{\mu,H},\Omega,0\bigr) = \deg_{LS}\bigl(\Phi_{\mu,\tilde H},\Omega,0\bigr)\\
&= \sum_{x\in \Phi_{\mu,\tilde H}^{-1}(0) }\deg_{LS}\bigl(\Phi_{\mu,\tilde H},B_{\epsilon}(x),0\bigr) = 
\sum_{x\in \Phi_{\mu,\tilde H}^{-1}(0) } \parity(A(\eta;x), I)\\
&= 
\sum_{x\in \Phi_{\mu,\tilde H}^{-1}(0) } (-1)^{\displaystyle\specflow(B(\eta;x), I)} =
\sum_{x\in \Phi_{\mu,\tilde H}^{-1}(0) } (-1)^{-\displaystyle\specflow(B(\eta;x), I)}\\
&= -\chi\bigl(\HB_*([x]\rel y)\bigr),
\end{align*}
which completes the proof.
\end{proof}

\begin{remark}
{\em
As $\mu\gg 1$ it holds that $\ell'(\eta_{0}) \sim \mu \lambda'(\eta_{0})$.
Indeed, $\Vert \phi(\eta_{0})\Vert_{H^{1/2}}^{2} = \sum_{k} (2\pi |k| +\mu) a_{k}^{2}$, where $a_{k}$ are the Fourier coefficients of $\phi(\eta_{0})$ and $\sum_{k}a_{k}^{2} =1$.
Since $\phi(\eta_0)$   are smooth functions the Fourier coefficients satisfy the following properties. 
For any $\delta>0$ and any $s>0$, there exists $N_{\delta,s}>0$ such that 
$
\sum_{|k|\geq N }| k|^{2s} |a_k |^2\leq \delta$, for all $ N\ge N_{\delta,s}$.
From the latter it follows that $\sum_{k} 2\pi |k| a_{k}^{2} \le C$, with $C>0$ independent of $\eta_{0}$ and $\mu$.
We derive that $\mu \le \Vert \phi(\eta_{0})\Vert_{H^{1/2}}^{2} \le  C +\mu$ and 
$$
1\leftarrow\frac{\mu}{C+\mu} \le \frac{\mu\lambda'(\eta_0)}{\ell'(\eta_0)} = \frac{\mu}{\Vert\phi(\eta_0)\Vert_{H^{1/2}}^2}
\le \frac{\mu}{\mu} =1, 
$$
  as $\mu \to \infty$, which proves our statement.
}
\end{remark}

\section{The proof of Theorems \ref{thm:PH1} and \ref{thm:exist}}
\label{sec:proof}
We start with 
the proof of Theorem \ref{thm:exist}.
Since $\HB_*([x]\rel y)$ is an invariant of the proper braid class $[x\rel y]$ it does not depend on a particular fiber
$[x]\rel y$. Therefore we denote the Euler-Floer characteristic by $\chi(x\rel y) := \chi\bigl(\HB_*([x]\rel y)\bigr)$.
%
%
%
%
Recall the homotopy invariance of the Leray-Schauder degree as expressed in Equation (\ref{homtop}) 
\begin{equation*}
\deg_{LS}(\Phi_{\mu},\Omega,0)=\deg_{LS}(\Phi_{\mu,\alpha},\Omega,0)=\deg_{LS}(\Phi_{\mu,H},\Omega,0).
\end{equation*}
By Proposition \ref{eqspfl} we have that
$$
\deg_{LS}(\Phi_{\mu},\Omega,0) = \deg_{LS}(\Phi_{\mu,H},\Omega,0) = - \chi(x\rel y),
$$
and $ \chi(x\rel y) \not  = 0$, then
  implies that $\Phi_{\mu}^{-1}(0)\cap \Omega \neq \varnothing$. Therefore there exists a closed integral curves  in any relative braid class fiber of 
$[x\rel y]$, whenever  $ \chi(x\rel y) \not  = 0$, and this completes the proof of Theorem \ref{thm:exist}.


The remainder of this section is to prove the Poincar\'e-Hopf Formula in Theorem \ref{thm:PH1} for closed integral curves in proper braid fibers.
The mapping  
$$
\E: C^1(\R/\Z) \to C^0(\R/\Z),\quad \E(x) = \frac{dx}{dt} - X(x,t),
$$
is smooth (nonlinear) Fredholm mapping of index $0$.
Let $M \in {\rm GL}(C^0,C^1)$ be an isomorphism such that $M\E(x)$ is of the form $M\E(x) = \Phi_M(x) = x - K_M(x)$,
with $K_M: C^1(\R/\Z) \to C^1(\R/\Z)$  compact. Such isomorphisms $M$ (constant parametrices) obviously exist. For example
$M= \Bigl( \frac{d}{dt} + 1\Bigr)^{-1}$, or $M =  - J L_\mu^{-1}$.
The mappings $\Phi_M: C^1(\R/\Z) \to C^1(\R/\Z)$ are Fredholm mappings of index $0$.

Let $x\in C^1(\R/\Z)$ be a non-degenerate zero of $\E$ and recall the index $\iota(x)$:
$$
\iota(x) = - \sgn(\det(\Theta)) (-1)^{\beta_M(\Theta)} \deg_{LS}\bigl(\Phi_M,B_\epsilon(x),0\bigr),
$$
where $\Theta \in {\rm M}_{2\times 2}(\R)$, with $\sigma(\Theta) \cap 2\pi k i\R=\varnothing$, $k\in \Z$ and $\beta_M(\Theta)$
is the Morse index of $\Id - K_M(0)$.

\begin{lemma}
\label{lem:index1}
The index $\iota(x)$ for  a non-degenerate zero of $\E$ is well-defined, i.e. independent of the choices of $M\in {\rm GL}(C^0,C^1)$
  and
$\Theta \in {\rm M}_{2\times 2}(\R)$.
\end{lemma}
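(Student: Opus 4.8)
The plan is to show that the quantity
$$
\iota(x) = -\sgn(\det\Theta)\,(-1)^{\beta_M(\Theta)}\,\deg_{LS}\bigl(\Phi_M,B_\epsilon(x),0\bigr)
$$
is invariant under changing $\Theta$ and under changing the parametrix $M$, by reducing everything to the parity of one fixed Fredholm path and invoking the homotopy invariance of parity established in Section~3. First I would recall that the linearization $D\E(x)=\frac{d}{dt}-D_xX(x(t),t)$ is the endpoint $F(1)$ of the Fredholm path $\eta\mapsto F(\eta)=\frac{d}{dt}-R(t;\eta)$ with $R(t;0)=\Theta$, $R(t;1)=D_xX(x(t),t)$, and that $\Phi_M = M\E$ has linearization $D\Phi_M(x)=M\,D\E(x)=M\,F(1)$, which is of the form `identity $+$ compact' and has an isolated zero at $x$ by non-degeneracy. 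Hence $\deg_{LS}(\Phi_M,B_\epsilon(x),0)=\deg_{LS}(M F(1))$, which by the parity characterization from Section~\ref{subsec:parity1} equals $\parity(M F(\eta),I)\cdot\deg_{LS}(M F(0)) = \parity(F(\eta),I)\cdot\deg_{LS}(MF(0))$ — here I use that $M$ serves as a (constant) parametrix for the whole path $MF(\eta)$, so the parity of $MF(\eta)$ equals that of $F(\eta)$, and that $\parity(F(\eta),I)=(-1)^{\gamma}$ with $\gamma$ the (transverse, generic) crossing number, matching the definition~\eqref{eqn:index0}.

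Next I would dispose of the $M$-dependence. With $F(0)=\frac{d}{dt}-\Theta$ fixed, $\deg_{LS}(MF(0))=(-1)^{\beta_M(\Theta)}$ by Proposition~\ref{beta}, where $\beta_M(\Theta)$ is exactly the Morse index of $\Id-K_M(0)=MF(0)$ as stated in the definition. Therefore
$$
(-1)^{\beta_M(\Theta)}\deg_{LS}(\Phi_M,B_\epsilon(x),0)
= (-1)^{\beta_M(\Theta)}\cdot(-1)^{\beta_M(\Theta)}\cdot\parity(F(\eta),I)
= \parity(F(\eta),I),
$$
which no longer contains $M$ at all. This already shows independence of the choice of parametrix $M\in \mathrm{GL}(C^0,C^1)$.

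For the $\Theta$-dependence, suppose $\Theta'$ is another admissible matrix ($\sigma(\Theta')\cap 2\pi k i\R=\varnothing$) with associated path $F'(\eta)$ from $\frac{d}{dt}-\Theta'$ to $\frac{d}{dt}-D_xX(x,t)$. Concatenating $F$ with the reverse of $F'$ gives a loop-like path; by homotopy invariance of parity and its concatenation property, $\parity(F(\eta),I)=\parity(\widetilde F(\eta),I)\cdot\parity(F'(\eta),I)$ where $\widetilde F$ is a path from $\frac{d}{dt}-\Theta$ to $\frac{d}{dt}-\Theta'$; so $\iota(x)$ is independent of $\Theta$ provided
$$
-\sgn(\det\Theta)\,\parity(\widetilde F(\eta),I) = -\sgn(\det\Theta'),
$$
i.e. provided $\parity(\widetilde F(\eta),I)=\sgn(\det\Theta)\sgn(\det\Theta')$. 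The main obstacle is precisely this last computation: one must evaluate the parity (equivalently the mod-$2$ crossing count) of a path between two constant-coefficient operators $\frac{d}{dt}-\Theta$ and $\frac{d}{dt}-\Theta'$. I would handle it by computing the spectral flow / crossing number explicitly on the diagonalizable models: the kernel of $\frac{d}{dt}-R$ for constant $R$ is nontrivial exactly when $R$ has an eigenvalue in $2\pi i\Z$, and as $\eta$ varies the eigenvalue pair of $R(\eta)$ crosses the imaginary axis at $2\pi ik$; each such transverse crossing flips the sign of $\det$ (a real eigenvalue passing through $0$, or a complex-conjugate pair crossing) while contributing a factor $-1$ to the parity. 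Tracking the parity of the number of sign changes of $\det R(\eta)$ along the path then yields exactly $\sgn(\det\Theta)\sgn(\det\Theta')$. Since the set of admissible $\Theta$ with $\det\Theta>0$ (resp. $\det\Theta<0$) is connected, it suffices to check one representative in each component, e.g. $\Theta=\mathrm{Id}$ and $\Theta=\mathrm{diag}(1,-1)$, where the count is immediate. Combining the two independence statements proves that $\iota(x)$ is well-defined.
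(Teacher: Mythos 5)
Your argument has the same overall skeleton as the paper's proof: both reduce the $M$-dependence to the parametrix-independence of the parity of the path $\eta\mapsto F_\Theta(\eta)=\frac{d}{dt}-R(t;\eta)$, obtaining $(-1)^{\beta_M(\Theta)}\deg_{LS}(\Phi_M,B_\epsilon(x),0)=\parity(F_\Theta(\eta),I)$, and both reduce the $\Theta$-dependence to showing that a connecting path $G$ from $\frac{d}{dt}-\Theta$ to $\frac{d}{dt}-\Theta'$ has $\parity(G(\eta),I)=\sgn(\det\Theta)\cdot\sgn(\det\Theta')$ via multiplicativity of parity under concatenation. Where you genuinely diverge is in evaluating $\parity(G(\eta),I)$: the paper picks the explicit parametrix $M_\mu=(\frac{d}{dt}+\mu)^{-1}$ and computes the endpoint degrees $\deg_{LS}(M_\mu G(0))$, $\deg_{LS}(M_\mu G(1))$ by solving the eigenvalue problem $M_\mu G(0)\psi=\lambda\psi$ in three cases according to the spectrum of $\Theta$, concluding each degree equals $\sgn(\det\Theta)$; you instead count the crossings of the path of constant-coefficient operators directly and match them against sign changes of $\det R(\eta)$. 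Your route is more geometric and arguably more elementary, since it avoids the endpoint eigenvalue computation entirely; the paper's route has the advantage of not requiring any genericity discussion of the path $G$. One imprecision in your crossing count: a crossing at which a complex-conjugate eigenvalue pair of $R(\eta)$ sits at $\pm 2\pi ik$, $k\neq 0$, does \emph{not} flip the sign of $\det R$ (the determinant equals $4\pi^2k^2>0$ there) and produces a two-dimensional kernel, hence contributes $+1$, not $-1$, to the parity. This does not damage the conclusion --- such crossings are codimension two and can be avoided by a generic choice of path, and in any case they contribute trivially to both the parity and the sign of the determinant, so the identity $\parity(G(\eta),I)=\sgn(\det\Theta)\sgn(\det\Theta')$ survives --- but the sentence as written conflates the $k=0$ and $k\neq 0$ cases. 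Your fallback via connectedness of the two components $\{\det\Theta>0\}$ and $\{\det\Theta<0\}$ of admissible matrices, reducing to the single representative path from $\Id$ to $\diag(1,-1)$, is a clean way to close this and is consistent with the homotopy invariance of parity used throughout Section \ref{subsec:parity1}.
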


\begin{proof}
Consider smooth paths $\eta \mapsto F_\Theta(\eta)$, defined by $F_\Theta(\eta) = \frac{d}{dt} - R(t;\eta)$, where
$R(t;0) = \Theta$ and $R(t;1) = D_x X(x(t),t)$. The path 
$$
F_\Theta: [0,1] \to \fred_0(C^1,C^0)
$$
 has invertible end points, and by the theory in \cite{FitzPej,Fitzpatrick3} we have that the parity of $\eta\mapsto F_\Theta(\eta)$ is well-defined and independent of $M$, i.e.
\begin{align*}
 \parity(F_\Theta(\eta),I) &= \parity(D_{M,\Theta}(\eta),I) = (-1)^{\beta_M(\Theta)}(-1)^{\beta_M(x)}\\
 &= (-1)^{\beta_M(\Theta)} \deg_{LS}\bigl(\Phi_M,B_\epsilon(x),0\bigr),
\end{align*}
where $D_{M,\Theta}(\eta) = MF_\Theta(\eta)$ and $ \beta_M(x)$ is the Morse index of $D_{M,\Theta}(1) = \Id - K_M(1)$.
It remains to show that the index $\iota(x)$  is independent with respect to $\Theta$.
Let $\Theta$ and $\Theta'$ be admissible matrices and let $\eta \mapsto G(\eta)$ be a path connecting
$G(0) = \frac{d}{dt} - \Theta$ and $G(1) = \frac{d}{dt} - \Theta'$.
For the parities it holds that
$$
 \parity(F_{\Theta}(\eta),I) =  \parity(G(\eta),I)\cdot  \parity(F_{\Theta'}(\eta),I).
 $$
To compute $ \parity(G(\eta),I)$ we consider a special parametrix $M_\mu$, given by $M_\mu =  \Bigl( \frac{d}{dt} + \mu\Bigr)^{-1}$, $\mu>0$.
From the definition of parity we have that
$$
\parity(G(\eta),I) = \parity(M_\mu G(\eta),I) = \deg_{LS}\bigl(M_\mu G(0)\bigr) \cdot  \deg_{LS}\bigl(M_\mu G(1)\bigr).
$$
We now compute the Leray-Schauder degrees of $M_\mu G(0)$ and $M_\mu G(1)$.
We start with $\Theta$ and in order to compute the degree we determine the Morse index.
Consider the eigenvalue problem
$$
M_\mu G(0) \psi = \lambda \psi, \quad \lambda \in \R,
$$
which is equivalent to $(1-\lambda) \frac{d\psi}{dt}  =  \bigl( \Theta + \lambda \mu\bigr) \psi$.
Non-trivial solutions are given by $\psi(t) = \exp{\Bigl(\frac{  \Theta + \lambda \mu}{1-\lambda}t\Bigr)} \psi_0$, which yields the condition $\frac{\theta+\lambda \mu}{1-\lambda} = 2\pi k i$, $k\in \Z$,
where $\theta$ is an eigenvalues of $\Theta$.
We now consider three cases:

(i) $\theta_\pm = a \pm ib$. In case of a negative eigenvalue $\lambda$ we have $\frac{a+\lambda\mu}{1-\lambda}
= 0$ and $\frac{b}{1-\lambda} = 2\pi k$. The same $\lambda<0$ also suffices for the conjugate eigenvalue
via $\frac{-b}{1-\lambda} = -2\pi k$. This implies that any eigenvalue $\lambda<0$ has multiplicity 2, and thus
$\deg_{LS}\bigl(M_\mu G(0)\bigr) =1$.

(ii) $\theta_\pm\in\R$, $\theta_-\cdot \theta_+>0$. In case of a negative eigenvalue $\lambda$ we have
 $\frac{\theta_\pm+\lambda\mu}{1-\lambda}
= 0$ and  thus $\lambda_\pm = -\frac{\theta_\pm}{\mu}$, which yields two negative or two positive eigenvalues.
As before $\deg_{LS}\bigl(M_\mu G(0)\bigr) =1$.

(iii) $\theta_\pm\in\R$, $\theta_-\cdot \theta_+<0$. From case (ii) we easily derive that there exist two eigenvalues 
$\lambda_\pm$, one positive and one negative, and therefore
$\deg_{LS}\bigl(M_\mu G(0)\bigr) = -1$.

These cases combined impliy that $\deg_{LS}\bigl(M_\mu G(0)\bigr)  = \sgn(\det(\Theta))$ and 
$$
\parity(G(\eta),I) = \sgn(\det(\Theta)) \cdot \sgn(\det(\Theta')).
$$
From the latter we derive:
\begin{align*}
\sgn(\det(\Theta)) &\cdot  \parity(F_{\Theta}(\eta),I) \\
&=\sgn(\det(\Theta))\cdot \sgn(\det(\Theta))\cdot \sgn(\det(\Theta')) \cdot  \parity(F_{\Theta'}(\eta),I) \\
&=\sgn(\det(\Theta')) \cdot  \parity(F_{\Theta'}(\eta),I),
\end{align*}
which proves the independence of $\Theta$.
\end{proof}

Lemmas \ref{lem:index1}   shows that the index of a non-degenerate zero of $\E$ is well-defined.
We now show that the same holds for  isolated zeroes.

\begin{lemma}
\label{lem:indep}
The index $\iota(x)$ for an isolated zero of $\E$ is well-defined and for a fixed choice of $M$ and $\Theta$ the index is given by
$$
\iota(x) = - \sgn(\det(\Theta)) (-1)^{\beta_M(\Theta)} \deg_{LS}\bigl(\Phi_M,B_\epsilon(x),0\bigr),
$$
where $\epsilon>0$ is small enough such that $x$ is the only zero of $\E$ in $B_\epsilon(x)$.
\end{lemma}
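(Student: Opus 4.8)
The plan is to reduce the isolated case to the non-degenerate case already treated in Lemma~\ref{lem:index1}, by means of a small perturbation of the vector field $X$ combined with the homotopy invariance and additivity of the Leray--Schauder degree.

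First I would check that the right-hand side of the asserted formula is meaningful. Since $x$ is an isolated zero of $\E$ and $\Phi_M = M\E$ is of the form identity $+$ compact, for $\epsilon>0$ small enough $\Phi_M^{-1}(0)\cap \overline{B_\epsilon(x)} = \{x\}$, hence $0\notin \Phi_M(\partial B_\epsilon(x))$ and $\deg_{LS}(\Phi_M,B_\epsilon(x),0)$ is defined; by excision it does not depend on the choice of such $\epsilon$. Thus for each fixed pair $(M,\Theta)$ the number
\[
\iota_{M,\Theta}(x) := -\sgn(\det(\Theta))\,(-1)^{\beta_M(\Theta)}\,\deg_{LS}\bigl(\Phi_M,B_\epsilon(x),0\bigr)
\]
is well-defined, and the content of the lemma is that $\iota_{M,\Theta}(x)$ is in fact independent of $(M,\Theta)$.

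Next comes the perturbation step. Fix $(M,\Theta)$, and fix $\epsilon$ so that $x$ is the unique zero of $\E$ in a slightly larger closed ball. By a standard parametrized transversality (Sard--Smale) argument for the Fredholm map $\E$ — perturbing $X$ by a smooth vector field localized near the track of $x$ — there is a vector field $\tilde X$, arbitrarily $C^1$-close to $X$ and equal to $X$ away from a neighbourhood of $x$, such that every zero of $\tilde\E(u) = \tfrac{du}{dt} - \tilde X(u,t)$ lying in $B_\epsilon(x)$ is non-degenerate; there are finitely many of these, say $\tilde x_1,\dots,\tilde x_N$. A compactness argument of the type used in the proof of Proposition~\ref{lemma isolation} shows that, for $\tilde X$ close enough to $X$, no zero of the linear homotopy $u\mapsto \tfrac{du}{dt} - \bigl((1-s)X + s\tilde X\bigr)(u,t)$ lies on $\partial B_\epsilon(x)$ for any $s\in[0,1]$. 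Hence, writing $\tilde\Phi_M = M\tilde\E$, the homotopy invariance and additivity of the Leray--Schauder degree and Proposition~\ref{beta} give
\[
\deg_{LS}\bigl(\Phi_M,B_\epsilon(x),0\bigr) = \deg_{LS}\bigl(\tilde\Phi_M,B_\epsilon(x),0\bigr) = \sum_{j=1}^N \deg_{LS}\bigl(\tilde\Phi_M,B_\rho(\tilde x_j),0\bigr) = \sum_{j=1}^N (-1)^{\beta_M(\tilde x_j)},
\]
where $\rho>0$ is small and $\beta_M(\tilde x_j)$ is the Morse index of the linearization $\Id - D_x K_M(\tilde x_j) = M\bigl(\tfrac{d}{dt} - D_xX(\tilde x_j(t),t)\bigr)$ at $\tilde x_j$, in the notation of Lemma~\ref{lem:index1}.

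Combining these identities,
\[
\iota_{M,\Theta}(x) = \sum_{j=1}^N \Bigl(-\sgn(\det(\Theta))\,(-1)^{\beta_M(\Theta)}\,(-1)^{\beta_M(\tilde x_j)}\Bigr) = \sum_{j=1}^N \iota(\tilde x_j),
\]
where $\iota(\tilde x_j)$ is the index of the non-degenerate zero $\tilde x_j$ of $\tilde\E$. By Lemma~\ref{lem:index1} each $\iota(\tilde x_j)$ is independent of $M$ and $\Theta$, so $\iota_{M,\Theta}(x)$ is too; this proves the lemma, and shows in passing that $\iota(x)$ equals the sum of the indices of the non-degenerate closed integral curves into which $x$ splits under a small perturbation of $X$. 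The main obstacle is the perturbation step itself: one must exhibit a genuinely $C^1$-small $\tilde X$ all of whose closed integral curves near $x$ are non-degenerate, which requires checking surjectivity of the derivative of the universal map $(u,Y)\mapsto \tfrac{du}{dt} - (X+Y)(u,t)$ (using multiplication-type perturbations $Y$ localized along the curve $x$), together with the accompanying compactness estimate guaranteeing that zeroes of the homotopy cannot escape to $\partial B_\epsilon(x)$.
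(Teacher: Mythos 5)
Your overall strategy is the same as the paper's: perturb so that the zero set in $B_\epsilon(x)$ becomes a finite collection of non-degenerate zeroes, use homotopy invariance and additivity of the Leray--Schauder degree to write $\deg_{LS}(\Phi_M,B_\epsilon(x),0)$ as the sum of the local degrees, and then invoke Lemma \ref{lem:index1} to get independence of $(M,\Theta)$. The difference lies in how the perturbation is produced, and here the paper's route is both simpler and closes the one step you leave open. You perturb the vector field $X$ itself and appeal to a parametrized transversality argument whose surjectivity you explicitly flag as the main unverified obstacle. The paper instead applies the Sard--Smale theorem directly to the smooth index-$0$ Fredholm map $\E: C^1(\R/\Z)\to C^0(\R/\Z)$: regular values are dense, so one picks $h\in C^0(\R/\Z)$ with $\Vert h\Vert_{C^0}$ arbitrarily small and sets $\widetilde\E = \E - h$, whose zeroes in $B_\epsilon(x)$ are finitely many and non-degenerate; no universal moduli space or localized multiplication-type perturbations are needed. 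Note that this is in fact a special case of your scheme (it amounts to perturbing $X$ by the $x$-independent field $h(t)$), so your acknowledged gap dissolves if you make that choice; the remaining steps --- smallness of $h$ guaranteeing no zeroes of the interpolation on $\partial B_\epsilon(x)$ (by the identity-plus-compact structure), additivity over the balls $B_{\epsilon_h}(x_h)$, stability $\deg_{LS}(\widetilde\Phi_M,B_\epsilon(x),0)=\deg_{LS}(\Phi_M,B_\epsilon(x),0)$, and the appeal to Lemma \ref{lem:index1} for each non-degenerate zero --- coincide with what you wrote, as does your closing observation that $\iota(x)$ is the sum of the indices of the zeroes into which $x$ splits, which is exactly how the paper defines it.
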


\begin{proof}
By the Sard-Smale Theorem one can choose an arbitrarily small $h\in C^{0}(\R/\Z)$, $\Vert h\Vert_{C^{0}} <\epsilon'$,
such that $h$ is a regular value of $\E$ and
$\E^{-1}(h)\cap B_{\epsilon}(x)$ consists of finitely many non-degenerate zeroes in $x_{h}$.
Set $\widetilde \E(x) = \E(x) -h$ and define 
\begin{equation}
\label{eqn:sum1}
\iota(x) = \sum_{x_{h}\in \widetilde\E^{-1}(0)\cap B_{\epsilon}(x)} \iota(x_{h}).
\end{equation}
We now show that $\iota(x)$ is well-defined. Choose a fixed parametrix $M$ (for $\E$) and fixed $\Theta \in {\rm M}_{2\times 2}(\R)$,  and let $\widetilde \Phi_M = M\widetilde \E$,
then 
$$
\sum_{x_{h} } \iota(x_{h})
=  - \sgn(\det(\Theta)) (-1)^{\beta_M(\Theta)} \sum_{x_{h} }\deg_{LS}(\widetilde\Phi_{M},B_{\epsilon_h}(x_h),0),
$$
where $B_{\epsilon_h}(x_h)$ are sufficiently small neighborhoods containing only one zero.
From Leray-Schauder degree theory we derive that
$$
 \sum_{x_{h} }\deg_{LS}(\widetilde\Phi_{M},B_{\epsilon_h}(x_h),0) =   \deg_{LS}(\widetilde\Phi_{M},B_{\epsilon }(x),0) =  \deg_{LS}(\Phi_{M},B_{\epsilon }(x),0),
 $$
 which proves the lemma.
\end{proof}
%
%

Theorem \ref{thm:PH1} now follows from the Leray-Schauder degree.
Suppose all zeroes of $\E$ in $\Omega = [x]\rel y$ are isolated, then
Lemma \ref{lem:indep} implies that
\begin{align*}
\sum_{x\in \E^{-1}(0)\cap \Omega} \iota(x) &= -  \sgn(\det(\Theta)) (-1)^{\beta_M(\Theta)} \sum_{x} \deg_{LS}\bigl(\Phi_M,B_\epsilon(x),0\bigr)\\
&=  -  \sgn(\det(\Theta)) (-1)^{\beta_M(\Theta)}  \deg_{LS}\bigl(\Phi_M,\Omega,0\bigr)
\end{align*}
Since the latter expression is independent of $M$ and $\Theta$ we choose $M = L_\mu^{-1}$ and
$\Theta = \theta J$. Then, $\Phi_M = \Phi_\mu$,  and for the indices we have $\sgn(\det(\theta J)) = 1$ and by Lemma \ref{eigenA0}, $(-1)^{\beta_{L_\mu^{-1}}(\theta J)} = 1$. By 
Proposition \ref{eqspfl}, $\deg_{LS}(\Phi_\mu,\Omega,0) = -\chi\bigl(x\rel y\bigr)$, which,
by substitution of these choices into the index formula, yields
$$
\sum_{x\in \E^{-1}(0)\cap \Omega} \iota(x) = \chi\bigl(x\rel y\bigr),
$$
completing the proof Theorem \ref{thm:PH1}.


\section{Computing the Euler-Floer characteristic}
\label{comp}
In section we prove Theorem \ref{thm:discrete} and show that the Euler-Floer characteristic can be determined via
a discrete topological invariant.

\subsection{Hyperbolic Hamiltonians on $\R^2$}
\label{subsec:hypham}
Consider Hamiltonians of the form
\begin{equation}
\label{eqn:special1}
H(x,t) = \frac{1}{2} p^2 - \frac{1}{2} q^2 + h(x,t),
\end{equation}
where $h$ satisfies the  following hypotheses:
 \begin{enumerate}
\item [(h1)] $h\in C^\infty(\R^2\times \R/\Z)$;
\item [(h2)] ${\rm supp}(h) \subset \R\times [-R,R]\times \R/\Z$,  for some $R>0$;
\item [(h3)] $\Vert h\Vert_{C_b^2(\R^2\times \R/\Z)}  \le c$.
\end{enumerate}

\begin{lemma}
\label{lem:special2}
Let $H$ be given by (\ref{eqn:special1}), with $h$ satisfying (h1)-(h3). Then, there exists  a constant $R'\ge R>0$, such any 1-periodic solution of
$x$ of $x' = X_H(x,t)$ satisfies the estimate
$$
|x(t)| \le R',\quad \hbox{for all}~~t\in \R/\Z.
$$
\end{lemma}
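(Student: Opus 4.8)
The plan is to derive an a priori bound directly from the structure of the Hamiltonian vector field associated with \eqref{eqn:special1}. Writing $x=(p,q)$, the Hamilton equations are $p' = -\partial_q H = q - \partial_q h$ and $q' = \partial_p H = p + \partial_p h$. First I would observe that outside the slab $\{|q|\le R\}$ the term $h$ vanishes (by (h2)), so there the dynamics is exactly the linear hyperbolic saddle $p'=q$, $q'=p$. The idea is that the unstable direction of this saddle (the $p=q$ line) drives solutions out of any large strip in the $p$-direction, so no 1-periodic orbit can escape to large $|p|$; meanwhile $|q|$ is controlled because once $|q|>R$ the $q$-equation is linear and, combined with the already-bounded $p$, cannot grow over a single period.

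The key steps, in order: (i) Use (h3) to get $|\partial_p h|, |\partial_q h|\le c$ uniformly. (ii) Show $|q(t)|$ is bounded: if $|q(t_0)|>R$ for some $t_0$, then on the maximal interval around $t_0$ where $|q|>R$ we have $q'=p$ exactly, and on the complementary set $|q|\le R$; in either case $|q'|\le |p| + c$, so over one period $|q|\le R + \int_0^1(|p|+c)\,dt$, giving a bound for $\|q\|_\infty$ in terms of $\|p\|_\infty$. (iii) Bound $\|p\|_\infty$ by a hyperbolicity/no-escape argument: consider the function $w = p - q$ (the stable combination). A short computation gives $w' = (q - \partial_q h) - (p + \partial_p h) = -w - (\partial_q h + \partial_p h)$, hence $\frac{d}{dt}(e^t w) = -e^t(\partial_q h+\partial_p h)$, so $\|w\|_\infty$ is bounded by $2c$ (using periodicity: integrate over a full period and use that $e^t w$ returns, or estimate $|w(t)|\le \frac{2c}{1-e^{-1}}$ via the periodic Green's function for $\frac{d}{dt}+1$). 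Symmetrically, with $v = p+q$ one gets $v' = v + (\partial_p h - \partial_q h)$, i.e. $\frac{d}{dt}(e^{-t}v) = e^{-t}(\partial_p h - \partial_q h)$, and again periodicity of $v$ forces $\|v\|_\infty \le \frac{2c}{1-e^{-1}}$ (the operator $\frac{d}{dt}-1$ is invertible on $1$-periodic functions with a bounded inverse). (iv) Combine: $p = \tfrac12(v+w)$ and $q=\tfrac12(v-w)$ are both bounded by a constant depending only on $c$, and then step (ii) is not even needed separately — set $R' = \max\{R, \tfrac{2c}{1-e^{-1}}\}$ or any convenient explicit constant.

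The main obstacle is step (iii): one must be careful that the bounds on $v=p+q$ and $w=p-q$ genuinely come from the \emph{periodicity} of the solution and not from an initial condition, since the unstable combination $v$ would blow up for a generic non-periodic trajectory. The point is that $\frac{d}{dt}\pm 1$ are both isomorphisms on $C^0(\R/\Z)$ (no $1$-periodic kernel, since $e^{\pm t}$ is not periodic), with operator norms of their inverses bounded by $\frac{1}{1-e^{-1}}$; applying these inverses to the forcing terms $\partial_q h\pm\partial_p h$, which are uniformly bounded by $2c$ thanks to (h3), yields the claimed estimate with $R'$ depending only on $R$ and $c$. I would also remark that (h1) is used only to make the flow well-defined and smooth so that $1$-periodic solutions make sense and the manipulations above are legitimate.
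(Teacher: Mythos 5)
Your argument is correct, but it follows a genuinely different route from the paper's. The paper first reduces to the scalar second--order equation $q_{tt}=h_{pq}q_t+(1+h_{pp})(q-h_q)+h_{pt}$, uses the support hypothesis (h2) together with a maximum--principle argument on any interval where $|q|>R$ (there $q_{tt}=q$, which is incompatible with periodic boundary behaviour) to get $|q(t)|<R$, and then bootstraps: multiplying the equation by $q$ and integrating gives $\int_0^1 q_t^2\le C(R)$, hence an $H^2$ bound on $q$, hence $|q_t|\le C(R)$ and finally $|p|\le |q_t|+C$ from the Hamilton equations. You instead diagonalize the first--order system along the stable/unstable directions of the linear saddle, $w=p-q$ and $v=p+q$, obtaining $w'+w=-(h_q+h_p)$ and $v'-v=h_p-h_q$, and exploit that $\frac{d}{dt}\pm 1$ are isomorphisms on $1$--periodic functions (no periodic kernel, since $e^{\pm t}$ is not periodic); since the forcing terms are bounded by $2c$ by (h3), periodicity alone gives $\Vert v\Vert_\infty,\Vert w\Vert_\infty\le C(c)$ — indeed the sharp constant is $2c$, by evaluating at an interior extremum where $v'=w'=0$. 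Your proof is shorter, yields an explicit bound depending only on $c$, and notably does not use (h2) at all, whereas the paper's first step relies on the support condition. Your correctly flagged caveat — that the bound on the unstable combination $v$ must come from periodicity rather than from an initial condition — is exactly the right point, and your resolution via invertibility of $\frac{d}{dt}-1$ on $C^0(\R/\Z)$ is sound. Both arguments give constants that are uniform along a homotopy with uniform $R$ and $c$, so the subsequent Lemma on homotopies goes through equally well with your version.
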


\begin{proof}
The Hamilton equation in local coordinates are given by
$$
p_t = q- h_q(p,q,t),\quad q_t = p + h_p(p,q,t).
$$ 
Since $h$ is smooth we can rewrite the equations as
\begin{equation}
\label{eqn:special3}
q_{tt} = h_{pq}(p,q,t) q_t + \bigl( 1+ h_{pp}(p,q,t)\bigr) \bigl(q-h_q(p,q,t)\bigr) + h_{pt}(p,q,t).
\end{equation}
If $x(t)$ is a 1-periodic solution to the Hamilton equations, 
and suppose there exists an interval $I= [t_0,t_1]\subset [0,1]$ such that
$|q(t)|>R$ on $\Int(I)$ and $|q(t)|\bigr|_{\partial I} = R$. The function $q|_{I}$ satisfies the equation $q_{tt} -q = 0$,
and obviously such solutions do not exist. Indeed, if $q|_I\ge R$, then $q_t(t_0)\ge 0$ and $q_t(t_1) \le 0$
and thus $0\ge q_t|_{\partial I} = \int_I q \ge R|I|>0$, a contradiction. The same holds for $q|_I\le -R$.
We conclude that
$$
|q(t)| < R,\quad \hbox{for all}~~t\in \R/\Z.
$$
We now use the a priori $q$-estimate in combination with Equation (\ref{eqn:special3}) and Hypothesis (h3).
Multiplying Equation (\ref{eqn:special3}) by $q$ and integrating over $[0,1]$ gives:
\begin{align*}
\int_0^1 q_t^2 &= - \int_0^1 h_{pq}  q_t q  - \int_0^1 \bigl( 1+ h_{pp}\bigr) \bigl(q-h_q\bigr)q - \int_0^1 h_{pt} q\\
&\le C\int_0^1 |q_t| + C \le \epsilon \int_0^1 q_t^2 + C_\epsilon,
\end{align*}
which implies that $\int_0^1 q_t^2 \le C(R)$.
The $L^2$-norm of the right hand side in (\ref{eqn:special3}) can be estimated using the $L^\infty$ estimate on $q$ and
the $L^2$-estimate on $q_t$, which yields $\int_0^1 q_{tt}^2 \le C(R)$. Combining these estimates we have
 that $\Vert q \Vert_{H^2(\R/\Z)} \le C(R)$ and thus $|q_t(t)| \le C(R)$, for all $t\in \R/\Z$.
 From the Hamilton equations it follows that $|p(t)|\le |q_t(t)| + C$, which proves the lemma.
\end{proof}

\begin{lemma}
\label{lem:special4}
If $H(x,t;\alpha)$, $\alpha \in [0,1]$ is a (smooth) homotopy of Hamiltonians satisfying (h1)-(h3) with  uniform constants $R>0$ and $c>0$,
then $|x_\alpha(t)|\le R'$, for all 1-periodic solutions and for all $\alpha\in [0,1]$.
\end{lemma}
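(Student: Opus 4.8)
The plan is to observe that Lemma \ref{lem:special2} is really a \emph{quantitative} statement: the radius $R'$ it produces depends only on the constants $R$ and $c$ appearing in (h2) and (h3), and on nothing else about the Hamiltonian. So the present lemma should follow by running the proof of Lemma \ref{lem:special2} with the homotopy parameter $\alpha$ carried along inertly, and then taking a supremum over $\alpha$.

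First I would fix $\alpha \in [0,1]$ and write the Hamilton equations for $H(\cdot,\cdot;\alpha)$ in the local coordinates $x_\alpha = (p_\alpha,q_\alpha)$, obtaining the second-order equation (\ref{eqn:special3}) with $h$ replaced by $h(\cdot,\cdot;\alpha)$. The comparison argument on an interval where $|q_\alpha| > R$ uses only that $h(\cdot,\cdot;\alpha)$ is supported in $\R \times [-R,R] \times \R/\Z$; since (h2) holds with the same $R$ for every $\alpha$, this gives $|q_\alpha(t)| < R$ for all $t$ and all $\alpha$.

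Next, the energy estimate --- multiply (\ref{eqn:special3}) by $q_\alpha$, integrate over $[0,1]$, and absorb the $\int q_{\alpha,t}^2$ term --- invokes only $\Vert h(\cdot,\cdot;\alpha)\Vert_{C_b^2} \le c$, which by hypothesis is uniform in $\alpha$. Hence $\Vert q_\alpha \Vert_{H^2(\R/\Z)} \le C(R)$ with $C(R)$ independent of $\alpha$, so $|q_{\alpha,t}(t)| \le C(R)$, and then the Hamilton equation $p_\alpha = q_{\alpha,t} - h_p(p_\alpha,q_\alpha,t;\alpha)$ gives $|p_\alpha(t)| \le C(R) + c$. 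Taking $R'$ to be the resulting bound --- the same $R'$ as in Lemma \ref{lem:special2} --- proves $|x_\alpha(t)| \le R'$ for all $\alpha \in [0,1]$ and all $1$-periodic solutions of $x' = X_{H(\cdot,\cdot;\alpha)}(x,t)$.

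There is no genuine obstacle here; the only point requiring a moment's attention is bookkeeping --- verifying that every constant in the chain of inequalities in the proof of Lemma \ref{lem:special2} is a function of $R$ and $c$ alone. This holds because that proof never uses pointwise information about $h$ beyond its support and its $C_b^2$-norm, both of which are controlled uniformly along the homotopy.
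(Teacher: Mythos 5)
Your proposal is correct and is exactly the paper's argument: the paper's proof of this lemma is the one-line observation that the a priori estimates of Lemma \ref{lem:special2} hold with constants uniform in $\alpha$, which is precisely the bookkeeping you carry out. No differences worth noting.
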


\begin{proof}
The a priori $H^2$-estimates in Lemma \ref{lem:special2} hold with uniform constants with respect to $\alpha \in [0,1]$. This then proves the lemma. 
\end{proof}

\subsection{Braids on $\R^2$ and Legendrian braids}
\label{subset:legen}
In Section \ref{intro} we defined braid classes as path components of closed loops in $\Lc\bC_n(\D)$, denoted by $[x]$.
If we consider closed loops in $\bC_n(\R^2)$, then the braid classes will be denoted by $[x]_{\R^2}$.
The same notation applies to relative braid classes $[x\rel y]_{\R^2}$. A relative braid class is proper
if components $x_c \subset x$ cannot be deformed onto
 (i) itself, or other components $x_c' \subset x$,  or (ii) components  $y_c \subset y$. A fiber $[x]_{\R^2}\rel y$ is \emph{not} bounded!

In order to compute the Euler-Floer characteristic of $[x\rel y]$ we assume without loss of generality that $x\rel y$ is a positive representative.
If not we compose $x\rel y$ with a sufficient number of positive full twists such that the resulting braid is positive, i.e. only positive crossings,
see \cite{GVVW} for more details.
The Euler-Floer characteristic remains unchanged.
We denote a positive representative $x^+\rel y^+$ again by $x\rel y$.

Define an augmented skeleton $y^*$ by adding the constant strands
$y_-(t) = (0,-1)$ and $y_+(t) = (0,1)$.
For  proper braid classes it holds that $[x\rel y] = [x\rel y^*]$.
For notational simplicity we denote the augmented skeleton again by $y$.
We also  choose the representative $x\rel y$ with the additional the property that 
$\pi_2 x \rel \pi_2 y$ is a relative braid diagram, i.e. there are no tangencies 
between the strands, where $\pi_2$ the projection onto the $q$-coordinate. We denote the projection by 
$q\rel Q$, where $q=\pi_2 x$ and $Q=\pi_2 y$. 
Special braids on $\R^2$ can be constructed from (smooth) positive braids. Define
 $x_L = (q_t,q)$ and $y_L = (Q_t,Q)$, where the subscript $t$ denotes differentiating with respect to $t$.
These are called \emph{Legendrian braids} with respect to $\theta = pdt - dq$.

\begin{lemma}
\label{repres1}
For positive braid $x \rel y$ with only  transverse, positive crossings, 
the braids $x_L \rel y_L$ and  $x\rel y$ are isotopic as braids on $\R^2$. 
Moreover, if
$x_L\rel y_L$ and $x'_L\rel y'_L$ are isotopic Legrendrian braids, then they are isotopic via
a Legendrian isotopy.
\end{lemma}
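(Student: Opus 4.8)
The plan is to prove the two assertions of Lemma \ref{repres1} separately, both by fairly explicit isotopies, since the Legendrian condition $x_L=(q_t,q)$ only constrains the $p$-coordinate to equal $\dot q$ while leaving the $q$-coordinate essentially free. For the first statement, I would argue that a positive braid with only transverse positive crossings admits a representative in which, projecting to the $q$-axis, the resulting braid diagram $q \rel Q$ has all crossings transverse and each crossing ``goes the right way'' --- i.e.\ the strand passing \emph{over} (in the $p$-direction) is the one with larger $q_t$ at the crossing time. This is precisely the content of positivity: a positive crossing in the $(t,q)$-diagram means the strand that is below in $q$ just before the crossing moves up faster, so at the crossing instant it has the larger slope $q_t$, hence larger $p$-coordinate in the lift $(q_t,q)$. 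So I would first isotope $x\rel y$ (through braids on $\R^2$) to put it in ``Legendrian position'': a braid whose $q$-projection is a generic positive braid diagram and whose $p$-coordinate agrees with $q_t$ \emph{at the crossing times}. Then a linear homotopy in the $p$-coordinate from $p(t)$ to $q_t(t)$, keeping $q$ fixed, does not create tangencies in the $q$-diagram and does not change any crossing's over/under data (since at crossing times the two $p$-values stay correctly ordered throughout the homotopy by construction), hence stays within the braid class. This shows $x\rel y \simeq x_L\rel y_L$ on $\R^2$.

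For the second statement --- that isotopic Legendrian braids are Legendrian-isotopic --- the key point is that a path of braids $x^\alpha\rel y^\alpha$, $\alpha\in[0,1]$, connecting two Legendrian braids can be ``Legendrianized'' at each time $\alpha$. Given any braid path, I would first perturb it (rel endpoints) so that for each $\alpha$ the $q$-projection $q^\alpha\rel Q^\alpha$ is a braid diagram except at finitely many $\alpha$ where a generic tangency (birth/death of a pair of crossings or a triple point) occurs; crucially, the endpoints are already Legendrian so no modification is needed there. Then I would replace the $p$-coordinate of $x^\alpha\rel y^\alpha$ by $(q^\alpha)_t$ for every $\alpha$. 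The resulting path $(q^\alpha_t, q^\alpha)$ is a path of Legendrian braids; I must check it remains a path within the ambient braid class. This holds because, as in the first part, the over/under data of the diagram $q^\alpha\rel Q^\alpha$ is, for a \emph{positive} generic braid diagram, already determined by the sign of the difference of $q_t$-values at each crossing --- and for a positive crossing that over/under datum is exactly what the Legendrian lift records. The finitely many exceptional $\alpha$'s are handled by noting that a generic tangency in the $q$-diagram corresponds to a genuine isotopy move in $\R^2$ on both the original path and the Legendrianized path, so the Legendrian path is still continuous and stays in the class.

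Concretely the key steps, in order, are: (1) use positivity to put $x\rel y$ in a position where its $q$-projection is a generic positive braid diagram whose crossing over/under data matches the order of the $q_t$-values at the crossing times; (2) perform the straight-line homotopy $p\mapsto (1-s)p + s\,q_t$ in the $p$-variable with $q$ fixed, checking it induces no tangency and preserves crossing data, giving $x\rel y\simeq x_L\rel y_L$; (3) for the second claim, take an arbitrary braid isotopy between two Legendrian braids, make its $q$-projection generic in $\alpha$, and replace $p$ by $q_t$ fiberwise; (4) verify that this fiberwise replacement stays in the braid class, using that for positive generic diagrams the diagram's combinatorics is recovered from the Legendrian lift, and that the exceptional parameter values correspond to honest isotopy moves.

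The main obstacle I expect is step (4): making precise the claim that replacing $p$ by $q_t$ along an entire \emph{family} of braids keeps one inside the braid class, i.e.\ that no intermediate tangency or illegal crossing is introduced by the substitution. The delicate point is that along the homotopy the $q$-diagram may momentarily fail to be positive (a crossing sign could flip) unless one is careful, and at such a moment the Legendrian lift would record the ``wrong'' over/under strand, potentially jumping to a different braid class. I would address this by observing that the homotopy can be chosen to stay \emph{within positive braid diagrams} --- since the endpoints are positive and positivity is an open condition on generic diagrams, any tangencies that occur can be taken to be birth/death of \emph{canceling pairs of positive crossings} (a Reidemeister-II-type move between positive crossings) and positive triple points, neither of which destroys positivity --- so that the correspondence ``over/under $=$ sign of $q_t$-difference'' holds uniformly. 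Once that is pinned down, the Legendrianized path is manifestly an isotopy through Legendrian braids, completing the proof. I would cite \cite{GVVW} for the standard facts about positive braid diagrams and for the genericity of braid isotopies that are used freely here.
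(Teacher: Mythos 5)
Your proposal is correct and follows essentially the same route as the paper: the first claim is proved there by exactly your straight-line homotopy $\tau p^j + (1-\tau)q^j_t$ in the $p$-coordinate, using positivity to guarantee that at each $q$-crossing time the differences $p^j-p^{j'}$ and $q^j_t-q^{j'}_t$ have the same sign, so no collision occurs along the interpolation. For the second claim the paper merely asserts that positivity of all crossings yields a smooth Legendrian isotopy, so your fiberwise Legendrianization of a generic braid isotopy is a (welcome) elaboration of the same idea rather than a different argument.
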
 

\begin{proof} 
By assumption
 $x\rel y $ is a representative for which the braid diagram
 $q\rel Q$ has only 
 positive transverse crossings. 
Due to the transversality of intersections 
the associated Legendrian braid $x_L\rel y_L$ is a braid $[x\rel y]_{\R^2}$.
Consider the homotopy
$$
\zeta^j(t,\tau) = \tau p^j(t) + (1-\tau)q_t^j,
$$
for every strand $q^j$.
At $q$-intersections, i.e. times $t_0$ such that $q^j(t_0) = q^{j'}(t_0)$ for some $j\not =j'$,
it holds that $p^j(t_0) - p^{j'}(t_0)$ and $q^j_t(t_0) - q_t^{j'}(t_0)$ are non-zero and have the same sign
since all crossings in $x\rel y$ are positive!
Therefore, $\zeta^j(t_0,\tau) \not = \zeta^{j'}(t_0,\tau)$ for any intersection $t_0$ and any $\tau\in [0,1]$,
which shows that $x\rel y$ and $x_L\rel y_L$ are isotopic.
Since $x_L\rel y_L$ and $x'_L\rel y'_L$ have only positive crossings, a smooth Legendrian isotopy exists.
\end{proof}

The associated equivalence class of Legendrian braid diagrams is denoted by $[q\rel Q]$
and its fibers by $[q]\rel Q$.

\subsection{Lagrangian systems}
\label{subsec:lagr}
Legendrian braids can be described with Lagrangian systems and Hamiltonians of the form
$H_L(x,t) = \frac{1}{2} p^2 -\frac{1}{2} q^2 + g(q,t)$.
On the potential functions $g$ we impose the following hypotheses:
\begin{enumerate}
\item [(g1)] $g \in C^\infty(\R\times \R/\Z)$;
\item [(g2)] ${\rm supp}(g) \subset [-R,R]\times \R/\Z$, for some $R>1$.
\end{enumerate}

In order to have a straightforward construction of a mechanical Lagrangian we may consider a special representation of $y$. The Euler-Floer characteristic $\chi\bigl(x\rel y\bigr)$ does not depend on the choice of the fiber
$[x]\rel y$ and therefore also not on the skeleton $y$.
We assume that $y$ has linear crossings in $y_L$. 
Let $t=t_0$ be a crossing and let $I(t_0)$ be the set of labels defined by:
$i,j\in I(t_0)$, if $i\not = j$ and $Q^i(t_0) = Q^j(t_0)$.
A crossing at $t=t_0$ is \emph{linear} if
$$
Q_t^i(t)= {\rm constant},\quad \forall i\in I(t_0),~{\rm and~}\quad\forall t\in (-\epsilon+t_0, \epsilon+t_0),
$$ 
for some $\epsilon = \epsilon(t_0)>0$.
Every skeleton $Q$ with transverse crossings is isotopic to a skeleton with linear crossings via a small
 local deformation at crossings.
%
%
%
For Legendrian braids $x_L\rel y_L \in [x\rel y]_{\R^2}$ with linear crossings the following result holds:
\begin{lemma}
\label{specialHam}
Let $y_L$ be a Legendrian skeleton with linear crossings. Then,
there exists a Hamiltonian of the form $H_L(x,t) = \frac{1}{2} p^2  -\frac{1}{2} q^2 + g(q,t)$, with 
 $g$ satisfying  Hypotheses (g1)-(g2), and $R>0$ sufficiently large,
such that $y_L$ is a skeleton for $X_{H_L}(x,t)$.
\end{lemma}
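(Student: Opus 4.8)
The goal is to construct a potential $g(q,t)$ satisfying (g1)--(g2) so that the Legendrian skeleton $y_L = (Q_t, Q)$ consists of 1-periodic solutions of the Hamilton equations associated with $H_L(x,t) = \tfrac12 p^2 - \tfrac12 q^2 + g(q,t)$. First I would write out what it means for a strand $y_L^j = (Q^j_t, Q^j)$ to solve $x_t = X_{H_L}(x,t)$: the $p$-equation $p_t = -\partial_q H_L = q - \partial_q g(q,t)$ becomes $Q^j_{tt} = Q^j - \partial_q g(Q^j(t),t)$, while the $q$-equation $q_t = \partial_p H_L = p$ is automatically satisfied because of the Legendrian form $p = q_t$. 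So the task reduces to finding a smooth $g$, compactly supported in $q$, such that along each strand
\begin{equation*}
\partial_q g\bigl(Q^j(t),t\bigr) = Q^j(t) - Q^j_{tt}(t), \qquad j=1,\dots,m,~~t\in\R/\Z.
\end{equation*}
That is, we must prescribe the $q$-derivative of $g$ at the finitely many points $Q^1(t),\dots,Q^m(t)$ for each $t$, with the prescribed values being the smooth functions $f^j(t) := Q^j(t) - Q^j_{tt}(t)$.

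\textbf{Construction away from crossings.} Away from crossing times the points $Q^1(t) < \dots < Q^m(t)$ (after relabelling) are distinct and vary smoothly, so one can choose, for each $t$, a smooth function $q\mapsto G(q,t)$ with $\partial_q G(Q^j(t),t) = f^j(t)$; e.g. take $G(\cdot,t)$ to be a polynomial in $q$ of degree $\le 2m-1$ whose derivative interpolates the values $f^j(t)$ at the nodes $Q^j(t)$ (a Hermite-type interpolation, which depends smoothly on the nodes and values as long as the nodes stay distinct). Then set $g = \chi(q)\,G(q,t)$ with $\chi$ a fixed smooth cutoff equal to $1$ on $[-R_0,R_0]\supset$ the $q$-range of the braid and supported in $[-R,R]$; since the augmented skeleton includes the constant strands $\pm 1$, the relevant $q$-values lie in a fixed compact set and the cutoff does not disturb the interpolation conditions. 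This yields (g1)--(g2) for any sufficiently large $R$.

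\textbf{Handling linear crossings.} The reason the hypothesis ``$y_L$ has linear crossings'' is imposed is exactly to make this interpolation survive through crossing times. I would argue as follows: near a linear crossing at $t_0$, the strands in $I(t_0)$ satisfy $Q^i_t(t) = \text{const}$, hence $Q^i_{tt}(t) = 0$ and the prescribed derivative $f^i(t) = Q^i(t)$ there; moreover all these strands pass through the single point $Q^i(t_0) = Q^{j}(t_0)$ with a common value $f^i(t_0) = Q^i(t_0)$ of the prescribed derivative, and by linearity $f^i(t) = Q^i(t_0) + Q^i_t\cdot(t-t_0)$. One checks the prescription $\partial_q g(q,t) = q$ (i.e. $g = \tfrac12 q^2$, locally, before cutoff — note this is absorbable since $H_L$ already contains $-\tfrac12 q^2$, giving effectively zero force) is consistent with all strands of $I(t_0)$ simultaneously in a neighborhood of $t_0$, so no conflict arises. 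Away from such neighborhoods the nodes are distinct and the generic interpolation of the previous paragraph applies; a partition of unity in $t$ (subordinate to ``near a crossing'' vs. ``bounded away from all crossings'') glues the local choices into a single smooth $g(q,t)$, 1-periodic in $t$, and one verifies the interpolation identities are preserved because they are pointwise-in-$(q,t)$ linear conditions and the partition functions sum to $1$.

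\textbf{Main obstacle.} The delicate point is the smoothness and compatibility of the interpolating potential across crossing times: as two nodes $Q^i(t), Q^j(t)$ collide, a naive Hermite interpolant degenerates (the divided differences blow up). The linear-crossing hypothesis is what rescues this — it forces the prescribed derivative data $f^i, f^j$ to agree to the relevant order at the collision and makes the local model $g=\tfrac12 q^2$ exact near $t_0$, so that one never actually has to interpolate two distinct derivative values at two colliding nodes. I would therefore spend the bulk of the proof carefully verifying that on each neighborhood $(-\epsilon+t_0,\epsilon+t_0)$ the simple choice works for all colliding strands at once, and that the $t$-partition of unity argument does not reintroduce node collisions in the regions where the generic interpolant is used. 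The cutoff in $q$ and the resulting constant $R$ are then routine, as is checking (h1)--(h3)-type bounds if needed downstream via Lemma~\ref{lem:special2}.
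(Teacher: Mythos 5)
Your proposal is correct in substance and, importantly, it isolates exactly the point where the linear-crossing hypothesis enters: near a crossing time $t_0$ the colliding strands have $Q^i_{tt}\equiv 0$, so the prescribed data $\partial_q g(Q^i(t),t)=Q^i(t)-Q^i_{tt}(t)=Q^i(t)$ becomes strand-independent and is realized by the single local model $\partial_q g=q$, so no genuine interpolation at colliding nodes is ever required. The construction you use is, however, different from the paper's. You work time-slice by time-slice: a Hermite-type interpolant in $q$ at the (distinct) nodes away from crossings, the model $g=\tfrac12 q^2$ in a $(q,t)$-neighborhood of each crossing, and a partition of unity to glue — noting correctly that the interpolation constraints are affine and hence preserved under convex combination. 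The paper instead works strand by strand: it assigns to each strand the linear potential $g^i(q,t)=-Q^i_{tt}(t)\,q$ (for which $Q^i$ trivially solves $Q^i_{tt}=-g^i_q$), multiplies by cut-offs $\omega^i$ supported in narrow tubular neighborhoods of the strand graphs in the cylinder, and observes that the linear-crossing hypothesis makes $g^i\equiv 0$ near crossing times, so the supports of the $\omega^i g^i$ can be made pairwise disjoint and the potentials simply added; the $\tfrac12 q^2$ term and an outer modification then arrange compact support in $q$. The paper's route buys a shorter argument because disjointness of supports replaces your interpolation-plus-gluing analysis (which, as you note, is where the bulk of your work would go, e.g.\ ensuring the near-crossing model is merged with the interpolant for the non-colliding strands without reintroducing node collisions); your route has the mild advantage of producing a single global formula per time slice and making the affine nature of the constraints explicit. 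Either way the verification of (g1)--(g2) is routine, and both hinge on the same observation about $Q^i_{tt}$ vanishing at linear crossings.
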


\begin{proof} 
Due to the linear crossings in $y_L$ we can follow the construction 
 in \cite{GVVW}.
 For each strand $Q^i$ we define the potentials $g^i(t,x) =  - Q_{tt}^i(t) q$.
 By construction $Q^i$ is a solution of the equation $Q^i_{tt} = - g^i_q(t,Q^i)$.
 Now choose small tubular neighborhoods of the strands $Q^i$ and cut-off functions $\omega^i$ that are equal to 1
 near $Q^i$ and are supported in the tubular neighborhoods.
 If the tubular neighborhoods are narrow enough, then  ${\rm supp}(\omega^i g^i) \cap {\rm supp}(\omega^j g^j) = \varnothing$, for all $i\not = j$, due to the fact that at crossings the functions $g^i$ in question are zero.
This implies that all strands $Q^i$ satisfy the differential equation $Q^i_{tt} = - \sum_i\omega^j(t) g^j_q(Q^i,t)$ and
 on $[-1 ,1 ]\times \R/\Z$, the function is $\sum_i\omega^i(t) g^i(q,t)$
is compactly supported. 
The latter follows from the fact that for the constant strands $Q^i=\pm 1$, the potentials $g^i$ vanish.
Let $R>1$  and define  
$$
\tilde g^i(t,q) = \begin{cases}
 g^i(t,q)     & \text{for ~ } |q|\le 1, ~t\in \R/\Z,\\
-\frac{1}{2m} q^2      & \text{for~} |q|\ge R, ~t\in \R/\Z.
\end{cases}
$$
where $m = \# Q$, which yields smooth functions $\tilde g^i$ on $\R\times \R/\Z$.
Now define
$$
g(q,t) =   \frac{1}{2} q^2  + \sum_{i=1}^m \tilde g^i(q,t).
$$
By construction ${\rm supp}(g) \subset [-R,R]\times \R/\Z$, for some $R>1$ and the strands $Q^i$ all
satisfy the Euler-Lagrange equations $Q^i_{tt} = Q^i - g_{qq}(Q^i,t)$, which completes the proof.
%
%
\end{proof}

The Hamiltonian $H_L$ given by Lemma \ref{specialHam} gives rise to a Lagrangian system with  the Lagrangian action   given by
\begin{equation}
\label{eqn:lagr}
\LL_g = \int_0^1 \frac{1}{2} q_t^2 +\frac{1}{2} q^2  -g(q,t) dt.
\end{equation}
The braid class $[q]\rel Q$  is bounded due to the special strands $\pm 1$ and all free strands $q$ satisfy
$-1 \le q(t) \le 1$. Therefore, the  set of critical points of $\LL_g$ in $[q]\rel Q$ is a compact set.
The critical points of $\LL_g$ in $[q]\rel Q$ are in one-to-one correspondence with the zeroes of the equation
$$
\Phi_{\mu,H_L}(x)  =x-L_{\mu}^{-1}\bigl(\nabla H_L(x,t)+\mu x\bigr) =0,
$$ 
in the set $\Omega_{\R^2} = [x_L]_{\R^2} \rel y_L$, which implies that $\Phi_{\mu,H_L}$ is a proper mapping on  $\Omega_{\R^2}$. 
From Lemma \ref{lem:special2} we derive that the zeroes of $\Phi_{\mu,H_L}$ are contained in ball in $\R^2$ with
radius $R'>1$, and thus $\Phi_{\mu,H_L}^{-1}(0) \cap \Omega_{\R^2} \subset B_{R'}(0)\subset C^1(\R/\Z)$.
Therefore  the Leray-Schauder degree is well-defined and in the generic case Lemma \ref{index} and Equations (\ref{eqn:connect}), (\ref{czspectral}) and (\ref{eqn:specsame1}) yield
\begin{equation}
\label{eqn:finaldegree1}
\deg_{LS}(\Phi_{\mu,H_L},\Omega_{\R^2},0) = - \sum_{x\in \Phi_{\mu,H_L}^{-1}(0) \cap \Omega_{\R^2}}
(-1)^{\mu^{CZ}(x)}  = - \sum_{q \in \Crit(\LL_g) \cap ([q]\rel Q)}  (-1)^{\beta(q)}.
\end{equation}

We are now in a position to use a homotopy argument. 
We can scale $y$ to a braid $\rho y$ such that the rescaled Legendrian braid $\rho y_L$ is supported in $\D$. By Lemma \ref{repres1}, $y$ is isotopic to $y_L$ and  scaling defines an isotopy
  between $y_L$ and $\rho y_L$. Denote the isotopy from $y$ to $\rho y_L$ by $y_\alpha$.
By Proposition \ref{eqspfl} we obtain that for both skeletons $y$ and $\rho y_L$ it holds that
$$
\deg_{LS}(\Phi_{\mu,H},\Omega,0) = -\chi\bigl(x\rel y\bigr) = \deg_{LS}(\Phi_{\mu,H_\rho},\Omega_\rho,0),
$$
where $\Omega_\rho = [\rho x_L]\rel \rho y_L \subset [x\rel y]$ and $H_\rho \in \Hh(\rho y_L)$.
Now extend $H_\rho$ to $\R^2\times \R/\Z$, such that Hypotheses (h1)-(h3)  are satisfied for
some $R>1$. We denote the Hamiltonian again by $H_\rho$.
By construction all zeroes of $\Phi_{\mu,H_\rho}$ in $[\rho x_L]\rel \rho y_L$ are supported in $\D$ and therefore
the zeroes of $\Phi_{\mu,H_\rho}$ in $[\rho x_L]_{\R^2}\rel \rho y_L$ are also supported in $\D$.
Indeed, any zero intersects $\D$, since the braid class is proper and since $\partial \D$ is invariant for the Hamiltonian vector field,  a zero is either  inside or outside $\D$. Combining these facts implies that a zero lies
inside $\D$. This yields
$$
 \deg_{LS}(\Phi_{\mu,H_\rho},\Omega_{\rho,\R^2},0) =  \deg_{LS}(\Phi_{\mu,H_\rho},\Omega_\rho,0) = -\chi\bigl(x\rel y\bigr) ,
$$
where $\Omega_{\rho,\R^2} = [\rho x_L]_{\R^2} \rel \rho y_L$.
%
For the next homotopy we keep the skeleton $\rho y_L$ fixed as well as the domain $\Omega_{\rho,\R^2}$.
Consider the linear homotopy of  Hamiltonians
$$
H_1(x,t;\alpha) = \frac{1}{2} p^2 - \frac{1}{2} q^2 + (1-\alpha) h_\rho(x,t) + \alpha g_\rho(q,t),
$$
where $H_{\rho,L}(t,x) = \frac{1}{2} p^2 - \frac{1}{2} q^2 + g_\rho(q,t)$ given by Lemma \ref{specialHam}.
This defines an admissible homotopy since $\rho y_L$ is a skeleton for all $\alpha\in [0,1]$.
The uniform estimates are obtained, as before, by Lemma \ref{lem:special4}, which allows application
of the Leray-Schauder degree:
$$
\deg_{LS}(\Phi_{\mu, H_{\rho,L}},\Omega_{\rho,\R^2},0) = \deg_{LS}(\Phi_{\mu, H_\rho},\Omega_{\rho,\R^2},0) =  -\chi\bigl(x\rel y\bigr).
$$
Finally, we scale $\rho y_L$ to $y_L$ via $y_{\alpha,L} = (1-\alpha) \rho y_L + \alpha y_L$ and we consider the
 homotopy
$$
H_2(x,t;\alpha) = \frac{1}{2} p^2 - \frac{1}{2} q^2 +  g(q,t;\alpha),
$$
between $H_L$ and $H_{\rho,L}$,
where $g(q,t;\alpha)$  is found by applying Lemma \ref{specialHam} to $y_{\alpha,L}$.
The uniform estimates from  Lemma \ref{lem:special4} allows us to  apply
  the Leray-Schauder degree:
$$
\deg_{LS}(\Phi_{\mu,H_L},\Omega_{\R^2},0) = \deg_{LS}(\Phi_{ \mu,H_{\rho,L}},\Omega_{\rho,\R^2},0) =  -\chi\bigl(x\rel y\bigr).
$$
Combining the   equalities for the various Leray-Schauder degrees with (\ref{eqn:finaldegree1})   yields:
\begin{equation}
\label{eqn:finaldegree2}
- \deg_{LS}(\Phi_{H_L},\Omega_{\R^2},0) =  \chi\bigl(x\rel y\bigr)  = \sum_{q \in \Crit(\LL_g) \cap ([q]\rel Q)} (-1)^{\beta(q)}.
\end{equation}

\subsection{Discretized braid classes}
\label{subsec:discbraid}
The Lagrangian problem (\ref{eqn:lagr}) can be treated by using a variation on the method of broken geodesics.
%
%
%
If we choose $1/d>0$ sufficiently small, the integral
\begin{equation}
S_i(q_{i},q_{i+1}) = \min_{q(t)\in E_i(q_{i},q_{i+1})\atop |q(t)|\le 1} \int_{\tau_{i}}^{\tau_{i+1}} \frac{1}{2} q_t^2 + \frac{1}{2} q^2- g(q,t) dt,
\end{equation}
has a unique minimizer $q^{i}$,
where $E_i(q_{i},q_{i+1}) = \bigl\{ q\in H^1(\tau_{i},\tau_{i+1})~|~ q(\tau_{i})=q_{i},~q(\tau_{i+1}) = q_{i+1}\bigr\}$, and $\tau_{i} = i/d$.
Moreover, if $1/d$ is   small, then the minimizers are non-degenerate and $S_i$ is a smooth function of $q_{i}$ and $q_{i+1}$.
Critical points $q$ of $\LL_g$ with $|q(t)|\le 1$ correspond to sequences $q_D = (q_0,\cdots,q_d)$, with
$q_0 = q_d$, which are critical points of the discrete action
\begin{equation}
\W(q_D) = \sum_{i=0}^{d-1} S_i(q_i,q_{i+1}).
\end{equation}
A concatenation $\#_{i} q^{i}$ of minimizers $q^{i}$ is continuous and is an element in the function space
$H^{1}(\R/\Z)$, and is referred to as a \emph{broken geodesic}. The set of broken geodesics $\#_{i}q^{i}$ is denoted by $E(q_{D})$ and standard arguments using
the non-degeneracy of minimizers $q^{i}$ show that $E(q_{D}) \hookrightarrow H^{1}(\R/\Z)$ is a smooth,  $d$-dimensional submanifold in $H^{1}(\R/\Z)$.
The submanifold $E(q_{D})$ is parametrized by sequences $D_{d}=\{q_{D}\in \R^{d}~|~|q_{i}|\le 1 \}$ and yields the following commuting diagram:
$$
\begin{diagram}
\node{E(q_{D})}\arrow{e,l}{\LL_g}\node{\R}\\
\node{D_{n}}\arrow{n,l}{\#_{i}}\arrow{ne,r}{\W}
\end{diagram}
$$
In the above diagram $\#_{i}$ is regarded as a mapping $q_{D}\mapsto \#_{i}q^{i}$, where the 
minimizers $q_{i}$ are determined by $q_{D}$.
The tangent space to $E(q_{D})$ at a broken geodesic $\#_{i}q^{i}$ is identified by
\begin{align*}
T_{\#_{i}q^{i}}E(q_{D}) = \bigl\{ &\psi\in H^{1}(\R/\Z)~|~-\psi_{tt} + \psi-g_{qq}(q^{i}(t),t)\psi =0,\\
&\psi(\tau_{i}) = \delta q_{i},~~\psi(\tau_{i+1}) = \delta q_{i+1},~~\delta q_{i}\in \R, \forall i\bigr\},
\end{align*} 
and $\#_{i}q^{i} + T_{\#_{i}q^{i}}E(q_{D})$ is the tangent hyperplane at $\#_{i}q^{i}$.
For $H^{1}(\R/\Z)$ we have the following decomposition for any broken geodesic $\#_{i}q^{i}\in E(q_{D})$:
\begin{equation}
\label{eqn:decomp}
H^{1}(\R/\Z) = E' \oplus T_{\#_{i}q^{i}}E(q_{D}),
\end{equation}
where $E' = \{\eta \in H^{1}(\R/\Z)~|~\eta(\tau_{i}) = 0,~~\forall i\}$.
To be more specific the decomposition is orthogonal with respect to the quadratic form
$$
D^2\LL_g(q)\phi\widetilde \phi =  \int_{0}^{1} \phi_{t}\widetilde\phi_{t} + \phi\widetilde\phi - g_{qq}(q(t),t)\phi\widetilde\phi dt,\quad
\phi,\widetilde\phi\in H^{1}(\R/\Z).
$$
Indeed, let $\eta \in E'$ and $\psi \in T_{\#_{i}q^{i}}E(q_{D})$, then
\begin{align*}
D^2\LL_g(\#_{i}q^{i})\eta\psi &= 
\sum_{i}\int_{\tau_{i}}^{\tau_{i+1}} \eta_{t}\psi_{t} + \eta \psi - g_{qq}(q^{i}(t),t)\xi\eta dt\\
&= \sum_{i}\psi_{t}\eta\bigl|_{\tau_{i}}^{\tau_{i+1}} - \sum_{i}\int_{\tau_{i}}^{\tau_{i+1}} \bigl[-\psi_{tt}+\psi +g_{qq}(q^{i}(t),t)\psi\bigr] \eta dt =0.
\end{align*}
Let $\phi = \eta + \psi$, then
$$
D^2\LL_g(\#_{i}q^{i})\phi\widetilde\phi = D^2\LL_g(\#_{i}q^{i})\eta\widetilde\eta + D^2\LL_g(\#_{i}q^{i})\psi\widetilde\psi, 
$$
by the above orthogonality.
By construction the minimizers $q^{i}$ are non-degenerate and therefore $D^2\LL_g|_{E'}$ is positive
definite. This implies that the Morse index of a (stationary) broken geodesic is determined by 
$D^2\LL_g|_{T_{\#_{i}q^{i}}E(q_{D})}$. By the commuting diagram for $\W$ this implies that
the Morse index is given by quadratic form $D^{2}\W(q_{D})$.
We have now proved the following lemma that relates the Morse index of critical points of the discrete action $\W$ to Morse index of the `full' action $\LL_g$.
\begin{lemma}
\label{lem:same-index}
Let $q$ be a critical point  of $\LL_g$ and $q_D$ the corresponding critical point of $\W$, then the Morse indices 
are the same i.e. $\beta(q) =  \beta(q_D )$. 
\end{lemma}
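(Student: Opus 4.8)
The statement is essentially a corollary of the discussion in the paragraphs immediately preceding it, so the plan is to assemble those ingredients. Write $q = \#_i q^i$ for the broken geodesic corresponding to the critical sequence $q_D$; since $q$ solves the Euler--Lagrange equation of $\LL_g$ on all of $\R/\Z$ the pieces $q^i$ glue up $C^1$, so $q$ is a genuine critical point of $\LL_g$ while $q_D$ is a critical point of $\W$. Recall that $\beta(q)$ is the dimension of a maximal subspace of $H^1(\R/\Z)$ on which the second variation $D^2\LL_g(q)$ is negative definite, whereas $\beta(q_D)$ is the number of negative eigenvalues of the symmetric $d\times d$ matrix $D^2\W(q_D)$.

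The first step is to use the $D^2\LL_g(q)$-orthogonal splitting
$$
H^1(\R/\Z) = E' \oplus T_{\#_i q^i}E(q_D)
$$
established above, together with the positive definiteness of $D^2\LL_g(q)|_{E'}$ (itself a consequence of the non-degeneracy of the minimizers $q^i$). Because the summands are orthogonal for $D^2\LL_g(q)$ and the form is strictly positive on $E'$, every subspace on which $D^2\LL_g(q)$ is negative definite must project injectively (along $E'$) into the finite-dimensional space $T_{\#_i q^i}E(q_D)$, with negative-definite image there; conversely, every negative-definite subspace of $T_{\#_i q^i}E(q_D)$ is negative definite in $H^1(\R/\Z)$. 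Hence $\beta(q)$ is finite and equals the number of negative eigenvalues of the restriction $D^2\LL_g(q)|_{T_{\#_i q^i}E(q_D)}$.

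The second step is to identify that restricted form with $D^2\W(q_D)$ via the commuting diagram $\W = \LL_g\circ(\#_i)$. The parametrization $\#_i\colon D_d \to E(q_D)\hookrightarrow H^1(\R/\Z)$ is smooth, and its differential at $q_D$ is a linear isomorphism $\R^d \to T_{\#_i q^i}E(q_D)$. Differentiating $\W = \LL_g\circ(\#_i)$ twice at the critical point $q_D$, the term containing the second derivative of $\#_i$ drops out because $D\LL_g(q) = 0$, which leaves
$$
D^2\W(q_D)[v,w] = D^2\LL_g(q)\bigl[d(\#_i)_{q_D}v,\, d(\#_i)_{q_D}w\bigr].
$$
Thus $D^2\W(q_D)$ and $D^2\LL_g(q)|_{T_{\#_i q^i}E(q_D)}$ are congruent symmetric forms, hence have the same inertia, and combining with the first step yields $\beta(q_D) = \beta(q)$.

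The only point needing genuine care is the infinite-dimensional bookkeeping in the first step --- that the Morse index is additive over the $D^2\LL_g(q)$-orthogonal splitting, and in particular finite --- but this is immediate once one has the orthogonality and the strict positivity of $D^2\LL_g(q)|_{E'}$, both of which were proved above; the remaining points (vanishing of the Hessian correction term at a critical point, and congruence implying equal inertia) are routine. It is also worth noting that the identification of $T_{\#_i q^i}E(q_D)$ with the space of fields $\psi$ solving $-\psi_{tt}+\psi-g_{qq}(q^i(t),t)\psi = 0$ on each $(\tau_i,\tau_{i+1})$ with arbitrary prescribed nodal values $\psi(\tau_i)$, established above, is precisely what makes the last displayed identity hold on the nose.
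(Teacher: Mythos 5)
Your proposal is correct and follows essentially the same route as the paper: the $D^2\LL_g$-orthogonal splitting $H^1(\R/\Z)=E'\oplus T_{\#_iq^i}E(q_D)$, positive definiteness on $E'$ from non-degeneracy of the minimizers, and the identification of the restricted form with $D^2\W(q_D)$ via the commuting diagram. You merely spell out two points the paper leaves implicit (additivity of the index over the orthogonal splitting, and the vanishing of the second-derivative-of-$\#_i$ term at a critical point), both of which you handle correctly.
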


For a 1-periodic function $q(t)$ we define the mapping 
$$
q \xrightarrow{D_d} q_D =  (q_0,\cdots,q_d),\quad q_i = q(i/d),~~i=0,\cdots,d,
$$
and $q_D$ is called the discretization of $q$. The linear interpolation
$$
q_D \mapsto \ell_{q_D}(t) = \#_i \Bigl[ q_i + \frac{q_{i+1} - q_i}{d} t\Bigr],
$$
reconstructs a piecewise linear 1-periodic function.
For a relative braid diagram $q\rel Q$, let $q_D\rel Q_D$ be its discretization, where
$Q_D$ is obtained by applying  $D_d$ to every strand in $Q$.
A discretization $q_D\rel Q_D$ is \emph{admissible} if $\ell_{q_D}\rel \ell_{Q_D}$ is homotopic to $q\rel Q$, i.e.
$\ell_{q_D}\rel \ell_{Q_D} \in [q\rel Q]$.
Define the \emph{discrete} relative braid class $[q_D\rel Q_D]$ as the set of `discrete relative braids' 
$q_D'\rel Q_D'$, such that $\ell_{q_D'}\rel \ell_{Q_D'} \in [q\rel Q]$.
The associated fibers are denoted by $[q_D]\rel Q_D$.
It follows from \cite{GVV}, Proposition 27, that 
$[q_D\rel Q_D]$ is guaranteed to be connected when
$$
d> \#\{\hbox{~crossings in}~q\rel Q\},
$$
i.e. 
for any two discrete relative braids $q_D\rel Q_D$ 
and $q_D'\rel Q_D'$,  there exists a homotopy $q_D^\alpha\rel Q_D^\alpha$ (discrete homotopy)
such that $\ell_{q_D^\alpha}\rel \ell_{Q_D^\alpha}$ is a path in $[q\rel Q]$.
Note that  fibers are not necessarily connected!
For a braid classes $[q\rel Q]$ the associated  discrete braid class $[q_D\rel Q_D]$ may be connected 
for a smaller choice of $d$.


We showed above that if $1/d>0$ is sufficiently small, then the critical points of $\LL_g$, with $|q|\le 1$, are in one-to-one correspondence with
the critical points of $\W$, and  their Morse indices coincide by Lemma \ref{lem:same-index}.
Moreover, if $1/d>0$ is small enough, then for all  critical points of $\LL_g$ in $[q]\rel Q$, the associated discretizations
 are admissible
 and $[q_D\rel Q_D]$ is a connected set. The discretizations of the critical points of $\LL_g$ in $[q]\rel Q$
 are  critical points of $\W$ in the discrete braid class fiber  $[q_D]\rel Q_D$. 
 
Now combine the index identity with (\ref{eqn:finaldegree2}), which yields
\begin{equation}
\label{eqn:to-discr}
\chi(x\rel y) = \sum_{q \in \Crit(\LL_g) \cap ([q]\rel Q)} (-1)^{\beta(q)} 
= \sum_{q_D \in \Crit(\W) \cap ([q_D]\rel Q_D)} (-1)^{\beta(q_D)}.
\end{equation}

\subsection{The Conley index for discrete braids}
\label{sec:con-br}

%
In \cite{GVV}  an invariant  for discrete braid classes $[q_D\rel Q_D]$ is  defined based on the Conley index.
The invariant 
$\HC_*([q_D]\rel Q_D)$ is independent of the fiber and can be described as follows.
A fiber $[q_D]\rel Q_D$ is a finite dimensional cube complex with a finite number of connected components.
Denote the  closures of the connected components  by $N_j$.
The faces of the hypercubes $N_j$ can be co-oriented in direction of decreasing the number of crossing in $q_D\rel Q_D$, and define $N_j^-$ as the closure of the set of faces with outward pointing co-orientation.
The sets $N_j^-$ are called \emph{exit sets}.
The invariant is given by
$$
\HC_*([q_D]\rel Q_D) = \bigoplus_{j} H_*(N_j, N_j^-).
$$
The invariant is well-defined for any  $d>0$ for which there exist admissible discretizations  and 
is independent of both the fiber and the discretization size.
From \cite{GVV} we have for any Morse function $\W$ on a proper braid class fiber $[q_D]\rel Q_D$,
\begin{equation}
\label{eqn:from GVV}
\sum_{q_D \in \Crit(\W) \cap ([q_D]\rel Q_D)} (-1)^{\beta(q_D)}= \chi\bigl( \HC_*([q_D]\rel Q_D)\bigr)=: \chi\bigl(q_D\rel Q_D\bigr).
\end{equation}
The latter can be computed for any admissible discretization and is an invariant for $[q\rel Q]$.
Combining \ref{eqn:to-discr} and \ref{eqn:from GVV} gives
\begin{equation}
\label{eqn:charss}
\chi\bigl(x\rel y\bigr) = \chi\bigl(q_D\rel Q_D\bigr).
\end{equation}
In this section we assumed without loss of generality that $x\rel y$ is augmented
and since the Euler-Floer characteristic is a braid class invariant, an admissible 
discretization is construction for an appropriate augmented, Legendrian   representative $x_L\rel y_L$. 
Summarizing
$$
\chi\bigl(x\rel y\bigr) =\chi\bigl(x_L\rel y_L^*\bigr) =  \chi\bigl(q_D\rel Q_D^*\bigr).
$$
Since $\chi\bigl(q_D\rel Q_D^*\bigr)$ is the same for any admissible discretization,
the Euler-Floer characteristic can be computed using any admissible discretization,
which proves Theorem \ref{thm:discrete}.

\begin{remark}
\label{eqn:realEC}
{\em
The invariant $\chi\bigl(q_D\rel Q_D\bigr)$ is a true Euler characteristic of a topological pair. To be more precise
$$
\chi\bigl(q_D\rel Q_D\bigr) = \chi\bigl([q_D]\rel Q_D,[q_D]^-\rel Q_D\bigr),
$$
where $[q_D]^-\rel Q_D$ is the exit set a described above. A similar characterization does not a priori exist
for $[x]\rel y$.  Firstly, it is more complicated to designate the equivalent of an exit set $[x]^-\rel y$ for $[x]\rel y$, and secondly 
it is not straightforward to develop a  (co)-homology theory that is able to provide meaningful information about the topological pair
$\bigl([x]\rel y,[x]^-\rel y\bigr)$. This problem is circumvented by considering Hamiltonian  systems  and carrying out Floer's approach towards Morse theory (see \cite{Floer1}),  by using the
isolation property of $[x]\rel y$.
The fact that the Euler characteristic of Floer homology is related to the Euler characteristic of
topological pair indicates that Floer homology is a good substitute for a suitable (co)-homology theory.
}
\end{remark}

\section{Examples}
\label{sec:examples}

We will illustrate by means of two examples that the Euler-Floer characteristic is computable and can be used
to find closed integral curves of vector fields on the 2-disc.
 
\subsection{Example} 
Figure  \ref{cont discr}[left]  shows the braid diagram $q\rel Q$ of a positive relative braid 
  $x\rel y$.   
%
%
The discretization with  $q_D\rel Q_D$, with $d=2$, is shown in Figure  \ref{cont discr}[right].
The chosen discretization is admissible and
defines the relative braid class  $[q_D\rel Q_D]$.
There are five strands, one is free and four are fixed. We denote the points on the free strand by $q_D=(q_0,q_1)$ and on the skeleton by  $Q_D= \{Q^1,\cdots, Q^4\}$, with $Q^i = (Q^i_0,Q^i_1)$, $i=1,\cdots,4$. 
 \begin{figure} [htp]
\centering
\includegraphics[scale=0.5]{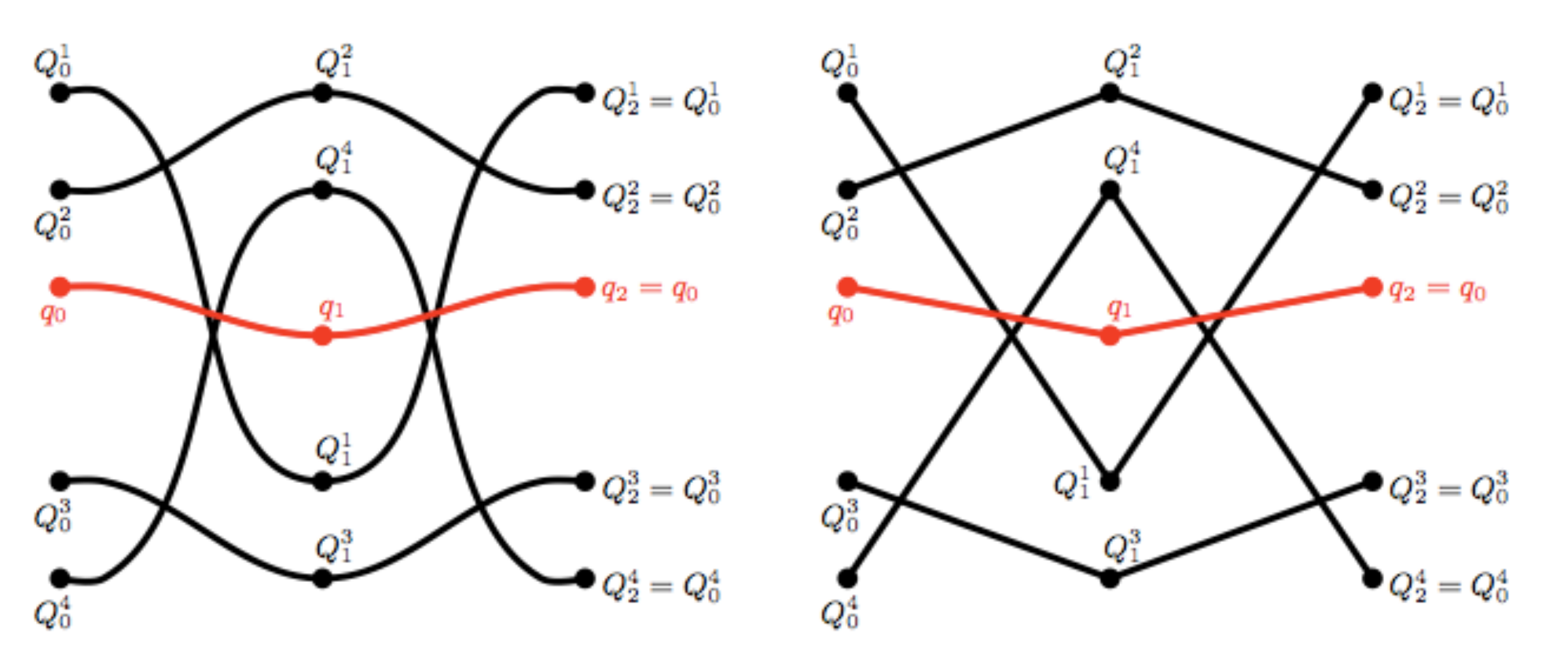}
\caption{A positive braid diagram [left] and an admissible discretization [right].}
\label{cont discr}
\end{figure}

In Figure \ref{fig:config1}[left] the braid class fiber $[q_D]\rel Q_D$ is depicted.
The coordinate $q_0$ is allowed to move between $Q_0^3$ and $Q_0^2$  and $q_1$ remains in the same braid class if it varies between $Q_1^1$ and $Q_1^4$. For the values $q_0=Q_0^3$ and $q_0=Q_0^2$ the relative braid becomes singular and if $q_0$ crosses these values two intersections are created. If $q_1$ crosses the values $Q_1^1$ or $Q_1^4$ two intersections are destroyed.
This provides the desired co-orientation, see Figure \ref{fig:config1}[middle].
The braid class fiber $[q_D]\rel Q_D$ consists of 1 component and
  we have that 
  $$
  N=\cl([q_D\rel Q_D])=\{(q_0,q_1): Q_0^3\leq q_0\leq Q_0^2,Q_1^1\leq q_1\leq Q_1^4 \},
  $$
   and the exit set   is 
   $$
   N^-=\{(q_0,q_1): q_1=Q_1^1, ~{\rm or~} q_1=Q_1^4\}.
   $$
For the Conley index this gives:
$$
\HC_k([q_D]\rel Q_D)=H_k(N,N^-;\Z)\cong\left\{
\begin{array}{ll}
{\Z} & {k=1}\\
0         & {\rm otherwise}
\end{array}
\right.
$$
\begin{figure} [htp]
\centering
\includegraphics[scale=0.33]{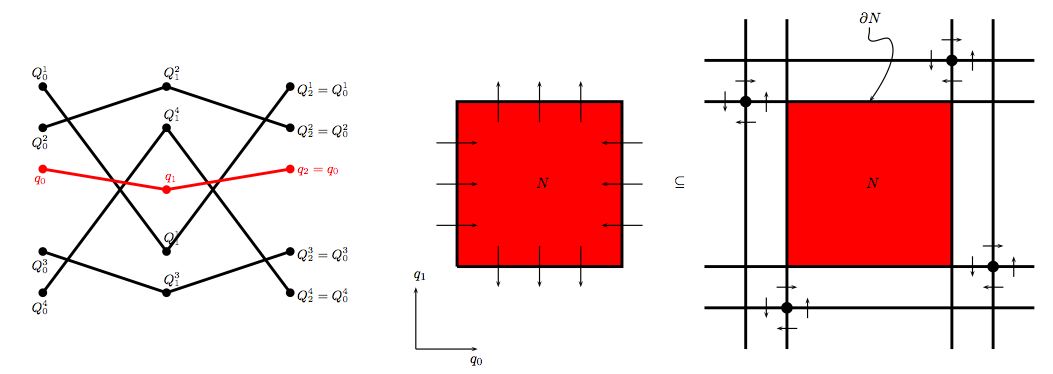}
\caption{The relative braid fiber $[q_D]\rel Q_D$ and $N = \cl([q_D]\rel Q_D)$.}
\label{fig:config1}
\end{figure}

The Euler characteristic of $\bigl([q_D]\rel Q_D,[q_D]^-\rel Q_D\bigr)$ can be   computed now and the Euler-Floer characteristic 
  $\bigl(x\rel y\bigr)$ is given by
$$
\chi (x\rel y)= \chi\bigl([q_D]\rel Q_D,[q_D]^-\rel Q_D\bigr) =  -1\not=0
$$ 
From Theorem \ref{thm:exist} we derive that any vector field for which $y$ is a skeleton has at least 1
 closed integral curve $x_0\rel y \in [x]\rel y$.
Theorem \ref{thm:exist} also implies that any
  orientation preserving diffeomorphism $f$ on the 2-disc which fixes the set of four points $A_4$, whose mapping class $[f;A_4]$ is represented by the braid $y$ has an additional fixed point.

\subsection{Example}
The theory can also be used to find additional closed integral curves by concatenating the skeleton $y$.
As in the previous example  $y$ is   given by Figure \ref{cont discr}.
Glue $\ell$ copies of the skeleton $y$ to its $\ell$-fold concatenation and a reparametrize time by $t\mapsto \ell\cdot t$.
Denote the rescaled $\ell$-fold concatenation of $y$ by $\#_\ell y$.
Choose $d= 2\ell$ and discretize $\#_\ell y$ as in the previous example.
\begin{figure} [htp]
\centering
\includegraphics[scale=0.55]{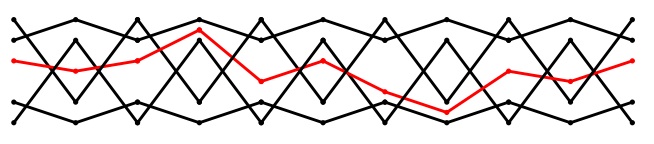}
\caption{A discretization of a braid class with a 5-fold concatenation of the skeleton $y$. The number
of odd anchor points in middle position is $\mu = 3$. }\label{fig:entropy}
\label{ciao}
\end{figure}
For a given braid class $[x\rel \#_\ell y]$, Figure \ref{ciao} below shows a discretized  representative 
$q_D \rel \#_\ell Q_D$, which is admissible.
For the skeleton $\#_\ell Q_D$ we can construct 
$3^\ell-2$ proper relative braid classes in the following way: the even anchor points of the free strand $q_D$ are always in the middle and for the odd anchor points we have 3 possible choices: bottom, middle, top (2 braids are not proper). We now compute the Conley index of the $3^\ell-2$ different proper discrete relative braid classes and show that the Euler-Floer characteristic is  non-trivial for these relative braid classes.

 The configuration space $N = \cl\bigl( [q_D]\rel \#_\ell Q_D\bigr)$ in this case is given by a cartesian product of $2\ell$ closed intervals, and therefore a $2\ell$-dimensional hypercube. We now proceed by determining the exit set  $N^-$. As in the previous example 
 the co-orientation is found by a union of faces with an outward pointing co-orientation.
 Due to the simple product structure of $N$, the set $N^-$ is determined by the odd
 anchor points in the middle position.
  Denote the number of middle positions at odd anchor points by $\mu.$ In this way $N^-$ consists  of opposite faces at at odd anchor points in middle position, see Figure \ref{fig:entropy}. 
Therefore 
$$
\HC_k([q_D]\rel \#_\ell Q_D)=H_k(N,N^-)=\left\{
\begin{array}{ll}
{\Z}_2 & {k=\mu}\\
0         & {k\not = \mu,}
\end{array}
\right.
$$
and the Euler-Floer characterisc is given by
$$
\chi\bigl(x\rel\#_\ell y\bigr) = (-1)^\mu \not = 0.
$$
Let $X(x,t)$ be a vector field for which $y$ is a skeleton of closed integral curves, then $\#_\ell y$ is a skeleton
for the vector field $X^\ell(x,t) := \ell X_\ell(x,\ell t)$.
From Theorem \ref{thm:exist} we derive that there exists a closed integral curve in each of the $3^\ell -2$
proper relative classes $[x]\rel y$ described above.
For the original vector field $X$ this yields $3^\ell -2$ distinct closed integral curves.
%
 Using the  arguments in \cite{VVW} one can find a compact invariant set for $X$ with positive topological entropy, 
which proves  that the associated flow is `chaotic'  whenever $y$ is a skeleton of given integral curves

\subsection{Example}
So far we have not addressed the question whether the closed integral curves  $x\rel y$ are non-trivial, i.e.\!\! not   equilibrium points of $X$.
The theory can also be extended in order to  find non-trivial closed integral curves.
This paper restricts to relative braids where $x$ consists of just one strand. Braid Floer homology 
for relative braids with $x$ consisting of $n$ strands is defined in \cite{GVVW}.
To illustrate the importance of multi-strand braids we consider the discrete braid class in Figure \ref{ciao-bene}.

\begin{figure} [htp]
\centering
\includegraphics[scale=0.42]{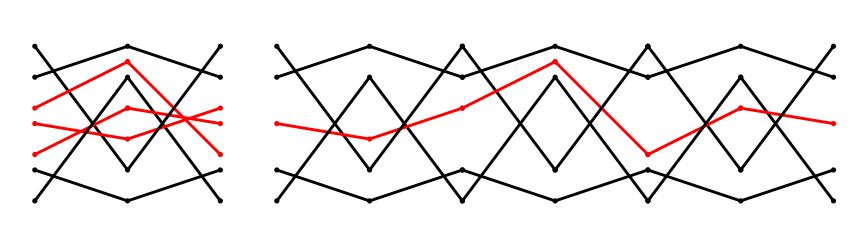}
\caption{A discretization of a braid class with a 3-fold concatenation   of the skeleton $y$. The number
of odd anchor points in middle position is $\mu = 2$ [right].
If we represent all translates of $x$ we obtain a proper relative braid class where $x$ is a 3-strand braid [left].
The latter provides additional linking information. 
} 
\label{ciao-bene}
\end{figure}

The braid class depicted in Figure \ref{ciao-bene}[right] is discussed in the previous example and the Euler-Floer characteristic is equal to 1. By considering all translates of $x$ on the circle $\R/\Z$, we obtain the braids in  Figure \ref{ciao-bene}[left].
The latter braid class is proper and   encodes extra information about $q_D$ relative to $Q_D$.
The braid class fiber is a 6-dimensional cube with the same Conley index as the braid class
in  Figure \ref{ciao-bene}[right]. Therefore,
$$
\chi(q_D\rel Q_D) = (-1)^2 = 1.
$$
As in the 1-strand case, the discrete  Euler characteristic
 can used   to compute the
associated Euler-Floer characteristic of $x\rel y$ and $\chi(x\rel y) = 1$.
The skeleton $y$ thus forces solutions $x\rel y$ of the above described type. The additional information we obtain this way is that
for braid classes $[x\rel y]$, the associated closed integral curves for $X$ cannot be constant and  therefore represent non-trivial closed integral curves.

\bibliographystyle{amsplain}

\bibliography{ghe}

\providecommand{\bysame}{\leavevmode\hbox to3em{\hrulefill}\thinspace}
\providecommand{\MR}{\relax\ifhmode\unskip\space\fi MR }
\providecommand{\MRhref}[2]{%
  \href{http://www.ams.org/mathscinet-getitem?mr=#1}{#2}
}
\providecommand{\href}[2]{#2}
\begin{thebibliography}{10}

\bibitem{Abbondandolo:2003fy}
A~Abbondandolo, \emph{{On the Morse index of Lagrangian systems}}, Nonlinear
  Analysis \textbf{53} (2003), no.~3-4, 551--566.

\bibitem{Duistermaat:1976vq}
J.~J. Duistermaat, \emph{{On the Morse index in variational calculus}},
  Advances in Mathematics \textbf{21} (1976), no.~2, 173--195.

\bibitem{Fitzpatrick3}
P.M. Fitzpatrick and J.~Pejsachowicz, \emph{The fundamental group of the space
  of linear {F}redholm operators and the global analysis of semilinear
  equations}, Fixed point theory and its applications ({B}erkeley, {CA}, 1986),
  Contemp. Math., vol.~72, Amer. Math. Soc., Providence, RI, 1988, pp.~47--87.
  \MR{956479 (89h:47097)}

\bibitem{FitzPej}
\bysame, \emph{Parity and generalized multiplicity}, Trans. Amer. Math. Soc.
  \textbf{326} (1991), no.~1, 281--305. \MR{1030507 (91j:58038)}

\bibitem{Fitzpatrick:1992vv}
P.M. Fitzpatrick, J.~Pejsachowicz, and P.~Rabier, \emph{The degree of proper
  $c^2$-fredholm mappings}, Journal f{\"u}r die Reine und Angewandte Mathematik
  \textbf{427} (1992), 1--33.

\bibitem{Fitzpatrick:1999ur}
P.M. Fitzpatrick, J.~Pejsachowicz, and L.~Recht, \emph{Spectral flow and
  bifurcation of critical points of strongly-indefinite functionals. i. general
  theory}, Journal of Functional Analysis \textbf{162} (1999), no.~1, 52--95.

\bibitem{Floer1}
A.~Floer, \emph{Symplectic fixed points and holomorphic spheres}, Comm. Math.
  Phys. \textbf{120} (1989), no.~4, 575--611. \MR{MR987770 (90e:58047)}

\bibitem{GVV}
R.~Ghrist, J.B. van~den Berg, and R.C. Vandervorst, \emph{Morse theory on
  spaces of braids and lagrangian dynamics}, Invent. Math. \textbf{152} (2003),
  no.~2, 369--432.

\bibitem{Lloyd}
N.~G. Lloyd, \emph{Degree theory}, Cambridge University Press, Cambridge, 1978,
  Cambridge Tracts in Mathematics, No. 73. \MR{MR0493564 (58 \#12558)}

\bibitem{RS1}
J.~Robbin and D.~Salamon, \emph{The {M}aslov index for paths}, Topology
  \textbf{32} (1993), no.~4, 827--844. \MR{MR1241874 (94i:58071)}

\bibitem{robbinsalamonmaslovindex}
\bysame, \emph{The spectral flow and the {M}aslov index}, Bull. London Math.
  Soc. \textbf{27} (1995), 1--33.

\bibitem{GVVW}
J.B. van~den Berg, R.~Ghrist, R.C. Vandervorst, and W.~W{\'o}jcik,
  \emph{{B}raid {F}loer homology}, preprint arXiv:0910.0647 (2010), 1--55.

\bibitem{VVW}
J.B. van~den Berg, R.C. Vandervorst, and W.~W{\'o}jcik, \emph{Chaos in
  orientation reversing twist maps on the plane}, Topology Appl. \textbf{154}
  (2007), 2580--2606.

\end{thebibliography}

\end{document}